\newcommand{\set}[1]{\ensuremath{ \left\lbrace  #1 \right\rbrace }}
\newcommand{\app}{$\spadesuit$}
\tikzset{
  circ/.style = {circle,draw,fill,inner sep=1.3pt},
  circr/.style = {circle,draw=red,fill=red,inner sep=1.3pt},
  scirc/.style = {circle,draw,fill,inner sep=.5pt},
  invisible/.style = {draw=none,inner sep=0pt,font=\small},
  nonedge/.style={decorate,decoration={snake,amplitude=.3mm,segment length=1mm},draw}
}
\newcommand{\longPaw}[5]{
\node[circ,label=below:{\tiny #4}] (a) at (#1+0,#2) {};
\node[circ,label=below:{\tiny #5}] (b) at (#1+1,#2) {};
\node[circ] (c) at (#1+.5,#2+.3) {};
\node[circ] at (#1+.5,#2+.6) {};
\node[circ] (d) at (#1+.5,#2+.9) {};
\node[draw=none] at (#1+1,#2+.6) {\small \textbf{#3}};

\draw (a) -- (b)
(a) -- (c) 
(b) -- (c) 
(c) -- (d);
}
\newcommand\yes{\textsc{Yes}}
\newcommand\no{\textsc{No}}
\newcommand\contracd{\textsc{1-Edge Contraction($\gamma$)}\,}
\newcommand\contractd{\textsc{1-Edge Contraction($\gamma_t$)}\,}
\newcommand\contracstd{\textsc{1-Edge Contraction($\gamma_{t2}$)}\,}
\title{Using edge contractions to reduce the semitotal domination number} %TODO Please add
\author{Esther Galby}{CISPA Helmholtz Center for Information Security, Saarbr\"ucken, Germany}{esther.galby@cispa.saarland}{}{Research supported by the European Research Council (ERC) consolidator grant No. 725978 SYSTEMATICGRAPH.}
\author{Paloma T. Lima}{University of Bergen, Bergen, Norway}{paloma.lima@uib.no}{}{}
\author{Felix Mann}{University of Fribourg, Fribourg, Switzerland}{felix.mann@unifr.ch}{}{}
\author{Bernard Ries}{University of Fribourg, Fribourg, Switzerland}{bernard.ries@unifr.ch}{}{}
\authorrunning{E. Galby and P.\,T. Lima and F. Mann and B. Ries } %TODO mandatory. First: Use abbreviated first/middle names. Second (only in severe cases): Use first author plus 'et al.'
\keywords{Blocker problem, Edge contraction, Semitotal domination} %TODO mandatory; please add comma-separated list of keywords
\begin{document}

\maketitle

%TODO mandatory: add short abstract of the document
\begin{abstract}
In this paper, we consider the problem of reducing the semitotal domination number of a given graph by contracting $k$ edges, for some fixed $k \geq 1$. We show that this can always be done with at most 3 edge contractions and further characterise those graphs requiring 1, 2 or 3 edge contractions, respectively, to decrease their semitotal domination number. We then study the complexity of the problem for $k=1$ and obtain in particular a complete complexity dichotomy for monogenic classes. 
\end{abstract}

%====================================================================

\section{Introduction}
\label{sec:intro}

In the standard graph modification problem, one is interested in modifying a given graph, using a minimum number of graph operations from a prescribed set, such that the resulting graph belongs to some fixed graph class. The related family of so-called \textit{blocker problems} considers not a graph class but rather asks for a specific graph parameter $\pi$ to decrease: given a graph $G$, a set $\mathcal{O}$ of one or more graph operations and an integer~$k\geq 1$, the question is whether $G$ can be transformed into a graph $G'$ by using at most $k$ operations from $\mathcal{O}$ such that $\pi(G')\leq \pi(G)-d$ for some {\it threshold} $d\geq 1$. These types of problems (as well as the variants where one wants to increase some parameter $\pi$) are related to other well-known graph problems like for instance \textsc{Hadwiger Number}, \textsc{Club Contraction} and \textsc{Graph Transversal} (see \cite{DPPR15,PPR16}), and have been extensively studied in the literature (see, e.g.,  \cite{bazgan2013critical,Bentz,CHEN,CWP11,DPPR15,diner2018contraction,domcontract,P3kP2,GAL-MAN-RIE2,MFCS2020,PBP,PPR16,paulusma2017blocking,paulusma2018critical,rautenbach,ZENKLUSEN2,ZENKLUSEN1}). Furthermore, identifying the part of a graph which makes a certain parameter increase or decrease significantly may give important information about the structure of the graph.

In this paper, we focus on one particular graph operation, namely \textit{edge contraction}. Contracting an edge $uv$ in a graph $G$ corresponds to deleting both vertices $u$ and $v$ and adding a new vertex which is made adjacent to every neighbour of $u$ or $v$ in the original graph $G$. We denote by $ct_\pi(G)$ the smallest integer $k$ such that there is a set of $k$ edges in $E(G)$ whose contraction yields a graph for which the value of $\pi$ is strictly smaller than $\pi(G)$. 
%Blocker problems are also related to other well-known graph problems as shown for instance in \cite{DPPR15,PPR16}. So far, the literature mainly focused on the following graph parameters:  the chromatic number, the independence number, the clique number, the matching number, the vertex cover number and the domination number. Furthermore, the set $\mathcal{O}$ usually consisted of a single graph operation, namely either vertex deletion, edge contraction, edge deletion or edge addition. Since these blocker problems are usually $\mathsf{NP}$-hard in general graphs, a particular attention has been paid to their computational complexity when restricted to special graph classes.

In \cite{HX10}, Huang and Xu considered for $\pi$ the \emph{domination number} (denoted by $\gamma$) and the \emph{total domination number} (denoted by $\gamma_t$). They showed that for $\pi \in \{\gamma,\gamma_t\}$, $ct_\pi(G)$ is never greater than $3$ and further characterised for every fixed $k \in \{1,2,3\}$, the graphs for which $ct_\pi(G)=k$ in terms of the structure of their (total) dominating sets. More specifically, they showed the following (see \Cref{sec:prelim} for missing definitions).

%\begin{theorem}[\cite{HX10}]\label{theorem:3totdomination}
%For any $\pi \in \{\gamma , \gamma_t\}$ and any graph $G$, $ct_{\pi} (G)\leq 3$.
%\end{theorem}

\begin{theorem}[\cite{HX10}]\label{theorem:contracdom}
For any graph $G$, the following holds.
\begin{itemize}
\item[(i)] $ct_\gamma (G)=1$ if and only if there exists a minimum dominating set in $G$ that is not independent.
\item[(ii)] $ct_\gamma (G)=2$ if and only if every minimum dominating set in $G$ is independent and there exists a dominating set $D$ in $G$ of size $\gamma(G)+1$ such that $G[D]$ contains at least two edges.
\end{itemize}
\end{theorem}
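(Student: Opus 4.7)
The plan is to prove each equivalence by explicit dominating-set constructions, resting on the observation that a single edge contraction can decrease $\gamma$ by at most $1$. This follows because, given a minimum dominating set $D'$ of $G/uv$ with $w$ the contracted vertex, one obtains a dominating set of $G$ of size $|D'|+1$: take $(D' \setminus \{w\}) \cup \{u,v\}$ if $w \in D'$, and $D' \cup \{u\}$ otherwise (since $u$ then dominates $v$, and the rest of $G$ is dominated just as in $G/uv$).

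For (i), the $(\Leftarrow)$ direction is immediate: if a minimum dominating set $D$ of $G$ contains an edge $uv$, then $(D \setminus \{u,v\}) \cup \{w\}$ dominates $G/uv$ and has size $\gamma(G)-1$. For the $(\Rightarrow)$ direction, let $D'$ be a minimum dominating set of $G/uv$ witnessing $ct_\gamma(G)=1$. If $w \in D'$, its lift $(D' \setminus \{w\}) \cup \{u,v\}$ is a minimum dominating set of $G$ containing the edge $uv$. If $w \notin D'$, then $w$ must be dominated by some $x \in D'$, so $x$ is adjacent in $G$ to at least one of $u, v$; whichever of $D' \cup \{u\}$ or $D' \cup \{v\}$ contains the edge to $x$ is then a minimum dominating set of $G$ with an edge.

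The $(\Leftarrow)$ direction of (ii) is analogous: contracting both edges of $G[D]$ simultaneously, whether or not they share a vertex, yields a dominating set of size $|D|-2 = \gamma(G)-1$ in the doubly-contracted graph, so $ct_\gamma(G) \leq 2$, and equality follows from (i).

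The main obstacle is the $(\Rightarrow)$ direction of (ii). I would fix edges $e_1, e_2$ realising $ct_\gamma(G)=2$, take a minimum dominating set $D'$ of the doubly-contracted graph, and lift carefully, splitting cases by whether $e_1, e_2$ share a vertex and by how many of the (one or two) new contracted vertices belong to $D'$. In the easiest case both contracted vertices lie in $D'$, and the direct preimage gives a dominating set of size $\leq \gamma(G)+1$ containing both $e_1$ and $e_2$; the independence hypothesis from (i) forces the size to be exactly $\gamma(G)+1$, for otherwise this would be a non-independent minimum dominating set. When one or both contracted vertices are outside $D'$, one again uses that each such vertex must be dominated by some $x \in D'$ whose lift is adjacent to an endpoint of the corresponding edge; the care lies in choosing which endpoints to add to $D'$ so that the resulting set both dominates $G$ and contains two edges, with the independence hypothesis once more excluding problematic short liftings that would yield a non-independent minimum dominating set.
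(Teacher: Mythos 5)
This statement is quoted from Huang and Xu~\cite{HX10} and is not proved in the paper, so there is no in-paper proof to compare against; the closest point of reference is the paper's own proof of the analogous result for $\gamma_{t2}$, Theorem~\ref{thm:friendlytriple}. Your proposal follows exactly that template (lift a minimum dominating set of the contracted graph back to $G$, splitting on whether the contracted vertices lie in it), and it is correct. The key lemma that one contraction decreases $\gamma$ by at most one, both directions of (i), and the backward direction of (ii) are fully and correctly argued. The forward direction of (ii) is only sketched, but the case structure you name is the right one and every case closes with the tools you list. For the record, the one place where the independence hypothesis is genuinely load-bearing is the shared-vertex case $e_1=uv$, $e_2=vz$ with the contracted vertex $w\notin D'$: there $D'\cup\{v\}$ is a dominating set of size $\gamma(G)$, hence a minimum dominating set, hence independent, which forces the vertex $x'\in D'$ dominating $w$ to be adjacent to $u$ or $z$ rather than $v$; then $D'\cup\{u,v\}$ (or $D'\cup\{v,z\}$) has size $\gamma(G)+1$ and contains the two edges $x'u$ and $uv$. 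In the remaining cases (both contracted vertices in $D'$; exactly one in $D'$; neither in $D'$ for disjoint edges) the direct lifts you describe already contain two edges, with minimality of $|D'|=\gamma(G)-1$ guaranteed by $ct_\gamma(G)>1$. So there is no gap, only unexpanded but routine case analysis.
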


\begin{theorem}[\cite{HX10}]\label{theorem:1totcontracdom}
For any graph $G$, the following holds.
\begin{itemize}
\item[(i)] $ct_{\gamma_t} (G)=1$ if and only if there exists a minimum total dominating set $D$ in $G$ such that $G[D]$ contains a $P_3$.
\item[(ii)]  $ct_{\gamma_t} (G)=2$ if and only if every minimum total dominating set in $G$ induces a graph that does not contain a $P_3$ and there exists a total dominating set $D$ in $G$ of size $\gamma_t(G)+1$ such that $G[D]$ contains a subgraph isomorphic to $P_4$, $K_{1,3}$ or $2P_3$.
\end{itemize}
\end{theorem}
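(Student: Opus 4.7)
The plan is to build both equivalences on an \emph{expansion--contraction} correspondence: if $w$ denotes the vertex resulting from contracting an edge $uv \in E(G)$, then for any $S' \subseteq V(G')$ with $w \in S'$, the set $S := (S'\setminus\{w\}) \cup \{u,v\}$ has size $|S'|+1$, and $S$ is a total dominating set (TDS) of $G$ whenever $S'$ is a TDS of $G'$; conversely, contracting an edge with both endpoints inside a TDS of $G$ yields a set of size one less in $G'$ that is total dominating, provided the resulting contracted vertex retains a neighbour inside the set. Iterated for two contractions, this correspondence drives both parts of the theorem.

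For part~(i), the $(\Leftarrow)$ direction is immediate: given a minimum TDS $D$ of $G$ with a $P_3 = x$-$u$-$v$ in $G[D]$, contract $uv$; then $(D\setminus\{u,v\})\cup\{w\}$ is a TDS of $G'$ of size $\gamma_t(G)-1$, since $w$ is adjacent to $x \in D$ in $G'$. For $(\Rightarrow)$, if contracting $uv$ yields $\gamma_t(G')<\gamma_t(G)$ with minimum TDS $D'$ of $G'$, the contracted vertex $w$ must lie in $D'$ (otherwise $D'$ is already a TDS of $G$ of size strictly less than $\gamma_t(G)$). Expanding produces a minimum TDS $D$ of $G$ containing $\{u,v\}$, and the $D'$-neighbour of $w$ required by total domination lifts to a vertex of $D$ adjacent to $u$ or $v$, giving the desired $P_3$.

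Part~(ii)'s $(\Leftarrow)$ direction again uses explicit two-edge contractions inside the given $(\gamma_t(G)+1)$-sized TDS $D$: for $P_4 = x$-$y$-$z$-$w$ contract $xy$ and $zw$ (the two new vertices remain adjacent through the former $yz$-edge); for $K_{1,3}$ with centre $c$ and leaves $a,b,d$ contract $ca$ and $cb$ (merging $\{c,a,b\}$ into a single vertex still adjacent to $d \in D$); for $2P_3$ contract one edge per component (each new vertex retains its third-vertex neighbour in $D$). A direct check shows the contracted set is a TDS of $G''$ of size $\gamma_t(G)-1$, so $ct_{\gamma_t}(G) \le 2$, and combined with part~(i) the hypothesis that no minimum TDS induces a $P_3$ forces equality.

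The main obstacle is the $(\Rightarrow)$ direction of~(ii). Assuming $ct_{\gamma_t}(G) = 2$ is witnessed by edges $e_1,e_2$ and a minimum TDS $D''$ of $G''$ of size at most $\gamma_t(G)-1$, I would expand $D''$ by replacing each contracted vertex of $D''$ by its pre-images in $G$ (two pre-images when $e_1,e_2$ are disjoint, three when they share a vertex), and then case-split on (a)~the incidence structure of $e_1,e_2$ in $G$ --- sharing a vertex, disjoint but joined by a $G$-edge between their endpoints, or completely disjoint --- and (b)~how many contracted vertices of $G''$ lie in $D''$. When the maximum number do, the expansion is a TDS of $G$ of size exactly $\gamma_t(G)+1$ which, together with the in-$D''$ witnesses required by total domination of the contracted vertices in $G''$, exhibits $K_{1,3}$ in the shared-vertex case, $P_4$ in the joined case, or $2P_3$ in the fully disjoint case. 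When fewer lie in $D''$, the expansion yields a minimum TDS of $G$ containing an edge; by the part-(i) hypothesis it contains no $P_3$, which in the shared-vertex sub-case is already contradicted by the two incident expansion-edges (forcing the ``maximum'' situation there), while in the remaining sub-cases one adds a single carefully chosen vertex --- a pre-image of a $D''$-witness dominating the remaining contracted vertex in $G''$ --- to reach the required $(\gamma_t(G)+1)$-sized configuration. Verifying that each geometric case really produces one of the three prescribed substructures, and that no degenerate subcase slips through, is the technical heart of the argument.
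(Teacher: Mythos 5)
This theorem is quoted from Huang and Xu \cite{HX10}; the paper does not reprove it, but its Theorem~\ref{thm:friendlytriple} (the semitotal analogue) is proved by exactly the expansion--contraction architecture you describe, so your overall strategy is the right one. There are nevertheless two genuine gaps. First, in the forward direction of (i) you dismiss the case $w\notin D'$ by asserting that $D'$ would then already be a total dominating set of $G$; this is false. If $w\notin D'$, every vertex of $G$ other than $u$ and $v$ is indeed totally dominated by $D'$ in $G$, but the $D'$-neighbour of $w$ in $G'$ is only guaranteed to be a $G$-neighbour of \emph{one} of $u,v$, so the other endpoint may be left undominated. The case is recoverable --- add the dominated endpoint, say $u$, to $D'$ to obtain a minimum total dominating set of $G$, and extract the $P_3$ from $u$, its $D'$-neighbour $t$, and the witness of $t$ inside $D'$ --- but as written the step fails.

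Second, the forward direction of (ii) is a plan, not a proof: you yourself defer ``the technical heart'' of verifying that every case lands in one of $P_4$, $K_{1,3}$, $2P_3$. That case analysis is most of the work (compare the proof of Theorem~\ref{thm:friendlytriple}(ii), which runs to several pages for the semitotal analogue), and your sketch is not reliable in detail. For instance, in the subcase of two disjoint contracted edges with exactly one contracted vertex in $D''$, the recipe ``expand, then add a pre-image of a witness'' could a priori produce an induced subgraph consisting of one $P_3$ plus isolated edges, which is none of the three listed configurations; what actually happens is that the expansion $(D''\setminus\{v_{xy}\})\cup\{x,y\}$ is a \emph{minimum} total dominating set already containing a $P_3$ (namely $x$, $y$ and the witness of $v_{xy}$ in $D''$, which is a genuine vertex of $G$ adjacent to $x$ or $y$), so this subcase contradicts the part-(i) hypothesis and cannot occur. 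Similar care is needed in the shared-vertex case (which can yield $P_4$ rather than $K_{1,3}$, depending on which of $x,y,z$ the witness attaches to) and in tracking when the two extracted $P_3$'s partially coincide (yielding $P_4$ or $K_{1,3}$ instead of $2P_3$). Until these cases are written out and checked, the proof of (ii)($\Rightarrow$) is incomplete.
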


In this paper, we consider the \emph{semitotal domination number} (denoted by $\gamma_{t2}$): a \emph{semitotal dominating set} of a graph $G$ is a set $D\subseteq V(G)$ such that every vertex in $V(G) \setminus D$ has a neighbour in $D$ (that is, $D$ is dominating) and every vertex in $D$ is at distance at most two from another vertex in $D$; and the \emph{semitotal domination number} of $G$ is the size of a minimum semitotal dominating set in $G$. We are more precisely interested in the following problem with $\pi = \gamma_{t2}$ and $k =1$. 

\begin{center}
\fbox{
\begin{minipage}{5.5in}
\textsc{$k$-Edge Contraction($\pi$)}
\begin{description}
\item[Instance:] A graph $G$.
\item[Question:] Is $ct_{\pi}(G)\leq k$?
\end{description}
\end{minipage}}
\end{center}

Similarly to the above results, we show that $ct_{\gamma_{t2}}(G)\leq 3$ for every graph $G$ and further characterise for every fixed $k \in \{1,2,3\}$, those graphs for which $ct_{\gamma_{t2}}(G)=k$ in terms of the structure of their semitotal dominating sets. Let us note that the critical substructures are more complex and diverse than in the case of the (total) domination number (see Figure~\ref{fig:conf}). We then determine the computational complexity of \contracstd for several graph classes, such as bipartite graphs and chordal graphs, as well as for every monogenic graph class, that is, the set of $H$-free graphs for some fixed graph $H$. From these results, we deduce in particular the following theorem. 

\begin{theorem}
\label{thm:dichotomy3}
\contracstd{} is polynomial-time solvable for $H$-free graphs if $H$ is an induced subgraph of $P_5 + tK_1$ with $t \geq 0$ or $H$ is an induced subgraph of $P_3 + pK_2 + tK_1$ with $p,t\geq 0$, and $\mathsf{(co)NP}$-hard otherwise.
\end{theorem}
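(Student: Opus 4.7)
The plan is to derive \Cref{thm:dichotomy3} by combining the algorithmic and hardness results proved in the preceding sections, glued together by a short classification of those graphs $H$ lying outside the tractable list. For the positive direction, the paper already provides polynomial-time algorithms for \contracstd{} on $P_5$-free graphs and on $(P_3 + pK_2 + tK_1)$-free graphs; since adding isolated vertices only mildly perturbs the semitotal domination number (any added isolated vertex must lie in every semitotal dominating set and can be handled in preprocessing), the $P_5$-free algorithm extends to the $(P_5 + tK_1)$-free case by a standard bounded-branching argument, matching the positive side of the dichotomy exactly.

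For the hardness direction, I would first dispose of all $H$ containing a cycle: since every cycle has some girth $g$, the class of graphs of girth greater than $g$ is $H$-free, and the paper supplies (co)NP-hardness of \contracstd{} on graphs of arbitrarily large girth (and on bipartite graphs for the even-cycle case). If on the other hand $H$ is a forest not contained in either tractable form, a short case analysis on its connected components shows that $H$ must contain at least one of the four induced obstructions
\[
K_{1,3},\qquad P_6,\qquad 2P_3,\qquad P_4+K_2.
\]
Indeed, any component of size at least six is either a path, yielding $P_6$, or has a vertex of degree three, yielding $K_{1,3}$; any non-path component on four or five vertices yields $K_{1,3}$; if all components are paths of length at most three with at least two copies of $P_3$ we obtain $2P_3$; and a $P_4$- or $P_5$-component combined with any further non-trivial component forces $P_4+K_2$ (using that $P_3$ contains $K_2$ as an induced subgraph). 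For each of these four obstructions, (co)NP-hardness of \contracstd{} on the respective host class has been established earlier in the paper, and the reduction transfers immediately to every $H$-free class containing it.

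The main subtlety I anticipate is ensuring that the hardness reductions produce graphs which truly avoid every $H$ in the tractable lists, not merely the single obstruction being targeted. Concretely, each reduction in the preceding sections must be inspected to verify that the constructed instances are simultaneously $H$-free for the targeted $H$; once this structural check is carried out across the finite catalogue of forbidden induced subgraphs, the rest of the argument is essentially bookkeeping and the two sides fit together to yield the claimed dichotomy.
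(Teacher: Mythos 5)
Your overall strategy is exactly the paper's: dispose of cyclic $H$ via the large-girth/bipartite hardness transferred through the cycle lemma, reduce acyclic $H$ to the four obstructions $K_{1,3}$, $P_6$, $2P_3$, $P_4+P_2$ via a case analysis on components, invoke the corresponding hardness constructions (each of which is indeed verified in the paper to avoid its target obstruction), and cover the remaining $H$ by the $P_5$-free and $P_3+kP_2$-free algorithms. The component classification you sketch matches the paper's case distinction, so on the hardness side and on the bookkeeping there is nothing to object to.

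There is, however, one concretely wrong step on the tractable side: your justification for lifting the $P_5$-free algorithm to $(P_5+tK_1)$-free graphs. You argue that ``any added isolated vertex must lie in every semitotal dominating set and can be handled in preprocessing,'' but this conflates the components of the forbidden graph $H$ with the structure of the input graph $G$. A $(P_5+tK_1)$-free graph need not be $P_5$-free and need not contain any isolated vertices (the paper in fact restricts attention to connected inputs, and an isolated vertex could not even be witnessed in a semitotal dominating set), so there is nothing to preprocess and the $P_5$-free algorithm cannot simply be run. The correct argument, which is Proposition~\ref{prop:boundedstdom}(c) in the paper, is different in kind: test whether $G$ is $H$-free; if it is, use the $H$-free algorithm, and if it is not, then an induced copy of $H$ must dominate $G$ (otherwise an undominated vertex would extend it to an induced $H+K_1$), so $\gamma_{t2}(G)\leq 2|V(H)|$ is bounded by a constant and brute force over all small subsets decides the instance. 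Applying this inductively $t$ times handles $P_5+tK_1$. A minor related point: the paper's algorithm is stated for $(P_3+kP_2)$-free graphs only, and the $(P_3+pK_2+tK_1)$-free case follows because $P_3+pK_2+tK_1$ is an induced subgraph of $P_3+(p+t)K_2$, so every $(P_3+pK_2+tK_1)$-free graph is $(P_3+(p+t)K_2)$-free; you assert the stronger statement without this (one-line) reduction.
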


It has been shown in \cite{semitot}, that the complexities of the \textsc{Dominating Set} problem (that is, given a graph $G$ and an integer $k\geq 0$, does there exist a dominating set of size at most $k$?), the \textsc{Total Dominating Set} problem (given a graph $G$ and an integer $k\geq 0$, does there exist a total dominating set of size at most $k$?) and the \textsc{Semitotal Dominating Set} problem (given a graph $G$ and an integer $k\geq 0$, does there exist a semitotal dominating set of size at most $k$?) agree on all monogenic graph classes. One may therefore ask whether this is still true when we consider blocker problems with respect to these parameters together with edge contractions. Interestingly, this is no longer the case: combining our results with the complexity dichotomies for \contracd and \contractd obtained in \cite{domcontract,P3kP2} and \cite{GAL-MAN-RIE2}, respectively, we can observe that the complexities of \contracstd and \contractd disagree on some monogenic graph classes. Whether there is a hereditary graph class on which \contracd and \contracstd differ remains an open question; we note however that if such a class exists, its characterising set of forbidden induced subgraphs has to contain at least two graphs and in light of Lemma \ref{lem:cycles}, we conjecture that such a graph class has to have a graph non-isomorphic to a cycle as a forbidden induced subgraph. 

The paper is organised as follows\footnote{Proofs marked with \app~have been placed in the appendix.}. In Section \ref{sec:prelim}, we present definitions and notations that are used throughout the paper. Section~\ref{sec:struct} is devoted to the proofs of our structural results which we need for the remainder of the paper. In Section \ref{sec:probl} we consider different graph classes and determine the complexity of \contracstd in these classes. We then combine these results in Section~\ref{sec:main} to prove our main result, that is, Theorem~\ref{thm:dichotomy3}. 

%====================================================================
\section{Preliminaries}
\label{sec:prelim}

Unless specified otherwise, we only consider finite, simple, connected graphs. For a graph $G$ we denote its vertex set by $V(G)$ and its edge set by $E(G)$. For a set $S\subseteq V(G)$, we let $G[S]$ denote the graph \emph{induced} by $S$, that is, the graph with vertex set $S$ and edge set $\set{xy\in E(G)\colon\, x,y\in S}$. For an edge $xy \in E(G)$, we denote by $G/xy$ the graph obtained from $G$ by contracting the edge $xy$. We say that two vertices $x$ and $y$ are \emph{adjacent} or \emph{neighbours} if $xy$ is an edge. The \emph{neighbourhood} $N(v)$ of a vertex $v\in V(G)$ is the set $\set{w\in V(G)\colon\, vw\in E(G)}$ and the \emph{closed neighbourhood} $N[v]$ of $v$ is the set $N(v)\cup\set{v}$. For any two vertices $v,w\in V(G)$, a vertex $u\in N(v)\cap N(w)$ is called a \emph{common neighbour} of $v$ and $w$. Given two sets $S,S'\subseteq V(G)$, we say that $S$ is \emph{complete} to $S'$ if every vertex in $S$ is adjacent to every vertex in $S'$. We say a vertex $v\in V(G)$ is complete to a set $S\subseteq V(G)$ if $\set{v}$ is complete to $S$. For any two vertices $x,y\in V(G)$, the \emph{distance between $x$ and $y$} is the number of edges in a shortest path from $x$ to $y$ and is denoted $d_G(x,y)$ (if it is clear from the context, the index may be omitted). A set $D\subseteq V(G)$ is a \emph{dominating set} if every vertex $v\in V(G)\setminus D$ has a neighbour in $D$. 

Let $D$ be a dominating set of $G$ and $w\in V(G)\setminus D$. For any neighbour $v\in D\cap N(w)$, we say that $v$ \emph{dominates} $w$. If $N(w)\cap D=\set{v}$, we say that $w$ is a \emph{private neighbour} of $v$. The set of all private neighbours of a vertex $v\in D$ is called the \emph{private neighbourhood} of $v$. For any two vertices $v,w\in D$ which are at distance at most two, we say that $v$ \emph{witnesses} $w$ or that $v$ is a \emph{witness} of $w$. This terminology allows us to characterise a semitotal dominating set as a dominating set in which every vertex is witnessed by another vertex in the dominating~set.

We denote by $K_n$, $P_n$ and $C_n$ the complete graph, the path, and the cycle on $n$ vertices, respectively. We may also call $K_3$ a \emph{triangle}. For a path $P$ with endpoints $x$ and $y$, we call any vertex in $V(P)\setminus\set{x,y}$ an \emph{internal vertex} of $P$. The \emph{claw} is the complete bipartite graph with partition sizes one and three. For a graph $H$, we say that a graph $G$ is $H$-free if it does not contain $H$ as an induced subgraph. For a family of graphs $\mathcal{H}$, we say that a graph $G$ is $\mathcal{H}$-free if $G$ is $H$-free for every $H\in\mathcal{H}$. A graph is called \emph{chordal} if it is $C_k$-free for every $k\geq 4$.

%===============================================================================

\section{Structural results}
\label{sec:struct}

In this section, we present some structural results which will then be used in Section~\ref{sec:probl}. These results are comparable to those obtained by Huang and Xu \cite{HX10} for the domination and the total domination numbers. Observe that by definition, $\gamma_{t2}(G) \geq 2$ for any graph $G$, which justifies the lower bound on the semitotal domination number in the following. 

\begin{theorem}
\label{thm:ct3}
For any graph $G$ with $\gamma_{t2}(G) \geq 3$, $ct_{\gamma_{t2}} (G) \leq 3$.
\end{theorem}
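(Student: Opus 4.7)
My plan is to start from a minimum semitotal dominating set $D$ of $G$ of size $|D| = \gamma_{t2}(G) \geq 3$ and, depending on the structure of $D$, exhibit a set $F \subseteq E(G)$ with $|F| \leq 3$ such that $D$ can be transformed into a semitotal dominating set of $G/F$ of size $|D| - 1$. The case analysis is driven by the \emph{witness graph} $W$ on vertex set $D$ in which $xy$ is an edge iff $1 \leq d_G(x, y) \leq 2$; the semitotal condition is precisely that every vertex of $W$ has degree at least $1$, so I split on whether $W$ has a vertex of degree $\geq 2$.

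\emph{Case A: some $b \in D$ has two distinct witnesses $a, c \in D$.} Take a shortest $a$-$b$ path $P$ in $G$ (of length $1$ or $2$), set $F = E(P)$, and identify $V(P)$ to a single vertex $u$ in $G/F$. I claim that $D' = (D \setminus \{a, b\}) \cup \{u\}$ is a semitotal dominating set of $G/F$: domination is inherited because $u$ absorbs all neighbours of $a$ and $b$; $u$ is witnessed by $c$ since $d_{G/F}(u, c) \leq d_G(b, c) \leq 2$; and every other vertex of $D'$ retains its original witness in $D$ because contractions do not increase distances. So $|F| \leq 2$ suffices.

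\emph{Case B: $W$ is a perfect matching.} Then $|D|$ is even and at least $4$. Fix a pair $\{a_1, a_2\}$ of $W$ and define $\mu = \min\{d_G(x, y) : x \in \{a_1, a_2\},\ y \in D \setminus \{a_1, a_2\}\}$; note $\mu \geq 3$. The crux of the whole proof is the claim that $\mu = 3$. Supposing $\mu \geq 4$ realised by $d_G(a_2, b_1)$ for some other pair $\{b_1, b_2\}$, take a shortest $a_2$-$b_1$ path $Q = q_0 q_1 \cdots q_\mu$. A short inspection of $q_2$ yields $q_2 = a_1$: if $q_2 \in D$ then $q_2$ witnesses $a_2$ and hence equals $a_1$, while if $q_2 \notin D$ its $D$-neighbour is forced by the minimality of $\mu$ and non-adjacency to $a_2$ to be $a_1$, which gives $d_G(a_1, b_1) \leq \mu - 1$, a contradiction. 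Symmetrically $q_{\mu - 2} = b_2$. For $\mu = 4$ this forces $a_1 = b_2$, impossible. For $\mu \geq 5$, the vertex $q_4$ is internal to $Q$, and an analogous analysis shows that $q_4$ can neither lie in $D$ (it would witness $a_1$ and hence equal $a_2 = q_0$) nor have any $D$-neighbour compatible with $\mu \geq 5$; this contradicts $D$ dominating $q_4$. Hence $\mu = 3$.

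With $\mu = 3$, fix a shortest $a_2$-$b_1$ path $Q = a_2 q_1 q_2 b_1$ and take $F = E(Q)$, so $|F| = 3$. Contracting $F$ merges $\{a_2, q_1, q_2, b_1\}$ into a single vertex $u$, and $D' = (D \setminus \{a_2, b_1\}) \cup \{u\}$ has size $|D| - 1$. Both $a_1$ and $b_2$ lie at distance $\leq 2$ from $u$ in $G/F$ and hence witness $u$ (while being witnessed by it in return); all vertices of $D'$ coming from other pairs of $W$ retain their original partners, since by Case~B their witnesses lie outside $\{a_1, a_2, b_1, b_2\}$ and distances do not grow. Therefore $\gamma_{t2}(G/F) \leq |D| - 1$ and $ct_{\gamma_{t2}}(G) \leq 3$. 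The principal obstacle is the $\mu = 3$ claim; once this is in hand, the choice of $F$ is immediate and the verification of the semitotal property is a routine distance check.
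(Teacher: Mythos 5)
Your proof is correct and rests on the same mechanism as the paper's: locate a witnessed pair $u,v$ in a minimum semitotal dominating set together with a third vertex of the set at distance at most three from that pair (the bound being forced because an internal vertex of any longer connecting path would need a dominator in $D$ that is closer), and contract the connecting shortest path while keeping the partner of the pair as the witness of the contracted vertex. The differences are organizational rather than substantive: the paper runs one uniform argument with no case split (your Case~A does yield the slightly finer fact that two contractions suffice whenever some vertex has two witnesses, and your $\mu=3$ argument could be shortened since $q_2=a_1$ already contradicts the minimality of $\mu$); also, in Case~A you should set $D'=(D\setminus V(P))\cup\{u\}$ rather than $(D\setminus\{a,b\})\cup\{u\}$, since the internal vertex of $P$ may itself lie in $D$.
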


\begin{proof}
Let $G$ be a graph with $\gamma_{t2}(G) \geq 3$ and let $D$ be a minimum semitotal dominating set of $G$. Consider $u,v \in D$ such that $d_G(u,v) \leq 2$ and let $w \in D \setminus \{u,v\}$ be a closest vertex to $\{u,v\}$, that is, $d_G(w,\{u,v\}) = d_G(D \setminus\{u,v\},\{u,v\}) = \min_{x \in D \setminus \{u,v\}} d_G(x, \{u,v\})$. We claim that $d_G(w,\{u,v\}) \leq 3$. Indeed, if $d_G(w,\{u,v\}) > 3$ and $x$ is the vertex at distance two from $w$ on a shortest path from $w$ to $\{u,v\}$, then $x$ is nonadjacent to $w,u$ and $v$ and so there exists a vertex $y \in D \setminus \{w,u,v\}$ adjacent to $x$ for otherwise $x$ would not be dominated. But then $d_G(y,\{u,v\}) < d_G(w,\{u,v\})$, a contradiction to the choice of $w$. Thus $d_G(w,\{u,v\}) \leq 3$. Now assume without loss of generality that $d_G(w,\{u,v\}) =  d_G(w,u)$ and let $P$ be a shortest path from $w$ to $u$. We claim that the graph $G'$ obtained by contracting the edges of $P$ has a semitotal domination number strictly less than that of $G$. Indeed, denote by $v_P$ the vertex resulting from the contraction of the edges of $P$ and let $D' = (D \setminus \{u,w\}) \cup \{v_P\}$. Then $D'$ is a semitotal dominating set of $G'$ as every vertex $x \in V(G) \setminus V(P)$ adjacent to a vertex of $P$ in $G$ is adjacent to $v_P$ in $G'$, and $d_{G'}(v_P,v) = d_G(u,v) \leq 2$. Thus $\gamma_{t2}(G') \leq |D'| = \gamma_{t2}(G) -1$ and since $P$ has length at most three, this concludes the proof.
\end{proof}

Next, we give necessary and sufficient conditions for $ct_{\gamma_{t2}}$ to be equal to one or two. Given a graph $G$, a \textit{friendly triple} is a subset of three vertices $x$, $y$ and $z$ such that $xy \in E(G)$ and $d_G(y,z) \leq 2$. The \emph{ST-configurations} correspond to the set of configurations depicted in \Cref{fig:conf}.

\begin{theorem}\label{thm:friendlytriple}
For any graph $G$, the following holds.
\begin{itemize}
\item[(i)] $ct_{\gamma_{t2}}(G) = 1$ if and only if there exists a minimum semitotal dominating set $D$ of $G$ such that $D$ contains a friendly triple.
\item[(ii)] $ct_{\gamma_{t2}}(G) = 2$ if and only if no minimum semitotal dominating set of $G$ contains a friendly triple and there exists a semitotal dominating set of size $ \gamma_{t2}(G) + 1$ that contains an ST-configuration. 
\end{itemize}
\end{theorem}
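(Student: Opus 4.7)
The plan is to exploit the fact that edge contraction does not increase distances: witnesses in $G$ transfer to any contracted graph, while witnesses in a contracted graph $G/F$ lift to vertices in $G$ only slightly further away. Both (i) and (ii) then reduce to case analyses on whether contracted vertices lie in a minimum semitotal dominating set of the contracted graph.

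For the ``only if'' direction of (i), I take an edge $e=xy$ with $\gamma_{t2}(G/e) \leq \gamma_{t2}(G)-1$, let $D'$ be a minimum semitotal dominating set of $G/e$ and $v_e$ the contracted vertex, and split on whether $v_e \in D'$. If yes, I set $D=(D'\setminus\{v_e\})\cup\{x,y\}$; the domination and semitotal checks in $G$ use that any witness path through $v_e$ in $G/e$ either stays short in $G$ or is replaced by $x$ or $y$ (both in $D$), and a witness $w$ of $v_e$ in $D'$ supplies the friendly triple $\{x,y,w\}$ via $d_G(\{x,y\},w) \leq 2$. If $v_e \notin D'$, a neighbour $a \in D'$ of $v_e$ is adjacent in $G$ to, say, $x$; I set $D = D'\cup\{x\}$, verify it analogously, and a witness $a'$ of $a$ in $D'$ gives the friendly triple $\{x,a,a'\}$ via either $d_G(a,a') \leq 2$ or $d_G(x,a') \leq 2$. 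For the ``if'' direction, a minimum semitotal dominating set $D$ with friendly triple $\{x,y,z\}$ contracts through $xy$ to $D'=(D\setminus\{x,y\})\cup\{v_{xy}\}$ of size $|D|-1$, with $z$ witnessing $v_{xy}$ because $d_{G/xy}(v_{xy},z) \leq d_G(y,z) \leq 2$, and all other witnesses transferring directly (any use of $x$ or $y$ as a witness in $D$ becomes $v_{xy}$ in $D'$).

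For (ii), the condition that no minimum semitotal dominating set contains a friendly triple is immediate by the contrapositive of (i). For the positive condition in the ``only if'' direction, I begin with two edges $e_1,e_2$ whose contraction reduces $\gamma_{t2}$, take a minimum semitotal dominating set $D''$ of the contracted graph, and lift via the same rules as in (i) applied to both contractions. The resulting $D\subseteq V(G)$ is a semitotal dominating set of size at most $\gamma_{t2}(G)+1$; the assumption that no minimum semitotal dominating set has a friendly triple, combined with (i), rules out $|D|\leq\gamma_{t2}(G)$ and forces $|D|=\gamma_{t2}(G)+1$. The subgraph induced by $D$ on the vertices newly introduced by lifting (endpoints of $e_1,e_2$ together with the witnesses pulled in) must then match one of the configurations in Figure~\ref{fig:conf}; the enumeration splits on whether $e_1,e_2$ share a vertex and on which contracted vertices were in $D''$. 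Conversely, given a semitotal dominating set $D$ of size $\gamma_{t2}(G)+1$ containing an ST-configuration, each configuration in Figure~\ref{fig:conf} comes with a canonical pair of edges inside it whose contraction collapses $D$ into a semitotal dominating set of $G/\{e_1,e_2\}$ of size $\gamma_{t2}(G)-1$, witnessing $ct_{\gamma_{t2}}(G) \leq 2$.

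The main obstacle is the case analysis in (ii): both the enumeration establishing that every lifted set induces one of the listed ST-configurations, and the configuration-by-configuration verification that each listed configuration admits a pair of edges whose contraction reduces the semitotal domination number. This is noticeably more involved than for (total) domination, because semitotal witnessing relies on distance-two paths, which can shorten or lengthen under contraction in several non-obvious ways, so the list of critical substructures is richer. The rest of the argument is a routine adaptation of the witness-transfer ideas developed for (i).
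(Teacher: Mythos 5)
Your overall strategy coincides with the paper's: part (i) is proved by splitting on whether the contracted vertex lies in a minimum semitotal dominating set of $G/e$ and lifting, and part (ii) by lifting a minimum semitotal dominating set of the twice-contracted graph back to $G$ and reading off the local structure around the contracted edges and their witnesses. Your treatment of part (i) is essentially complete and correct, including the key point that a witness path passing through the contracted vertex can be rerouted through $x$ or $y$ (or through the edge $xy$ itself), and it matches the paper's argument.

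For part (ii), however, there is a genuine gap: the substantive content of the statement is the claim that the seven configurations $O_1,\dots,O_7$ of Figure~\ref{fig:conf} are \emph{exhaustive}, i.e.\ that in every case of the lift (edges sharing a vertex or not; zero, one or two contracted vertices inside $D'$; witnesses coinciding or not; witnesses at distance one or two) the resulting set of size $\gamma_{t2}(G)+1$ really contains one of these seven patterns. You assert this ("must then match one of the configurations") and explicitly defer the enumeration, but this is precisely what has to be checked -- the paper spends roughly two pages on it, organised around an auxiliary observation (any $P_4$ in a semitotal dominating set yields $O_4$ or $O_6$) and a careful tracking of which of $w_1,w_1',w_2,w_2'$ may coincide. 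Without carrying out this analysis one cannot conclude that the stated list is complete (an omitted configuration would make the "only if" direction of (ii) false as stated). Symmetrically, the converse direction requires a configuration-by-configuration verification that contracting the two designated edges both removes two vertices from the set and preserves domination and witnessing; you again assert this without checking it. So the proposal is a correct plan with part (i) done, but part (ii) is a sketch whose central step -- the case analysis certifying the specific list of ST-configurations -- is missing.
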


\newcommand\oone{$O_1$}
\newcommand\otwo{$O_2$}
\newcommand\othree{$O_3$}
\newcommand\ofour{$O_4$}
\newcommand\osix{$O_5$}
\newcommand\oseven{$O_6$}
\newcommand\oeight{$O_7$}

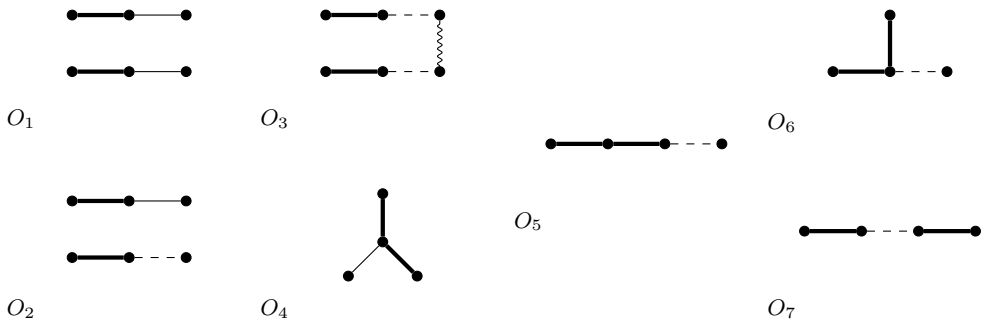
\begin{figure}[htb]
\begin{minipage}[b]{0.23\textwidth}
\begin{subfigure}[b]{\linewidth}
\centering
\begin{tikzpicture}[node distance=0.75cm]
\node[circ] (1) at (0,0) {};
\node[circ,right of=1] (2) {};
\node[circ,right of=2] (3) {};
\node[circ,above of=1] (4) {};
\node[circ,right of=4] (5) {};
\node[circ,right of=5] (6) {};
\node[invisible] at (1,-.25) {};

\draw[-] (2) -- (3)
(5) -- (6);
\draw[ultra thick] (1) -- (2)
(4) -- (5);
\end{tikzpicture}
\caption*{\oone}
\end{subfigure}

\vspace*{7mm}

\begin{subfigure}[b]{\linewidth}
\centering
\begin{tikzpicture}[node distance=0.75cm]
\node[circ] (1) at (0,0) {};
\node[circ,right of=1] (2) {};
\node[circ,right of=2] (3) {};
\node[circ,above of=1] (4) {};
\node[circ,right of=4] (5) {};
\node[circ,right of=5] (6) {};
\node[invisible] at (1,-.3) {}; 
\node[invisible] at (1,.9) {};

\draw[-] (5) -- (6);
\draw[dashed] (2) -- (3);
\draw[ultra thick] (1) -- (2)
(4) -- (5) ;
\end{tikzpicture}
\caption*{\otwo}
\end{subfigure}
\end{minipage}
\begin{minipage}[b]{0.23\textwidth}
\begin{subfigure}[b]{\linewidth}
\centering
\begin{tikzpicture}[node distance=0.75cm]
\node[circ] (1) at (0,0) {};
\node[circ,right of=1] (2) {};
\node[circ,right of=2] (3) {};
\node[circ,above of=1] (4) {};
\node[circ,right of=4] (5) {};
\node[circ,right of=5] (6) {};
\node[invisible] at (1,-.25) {};

\draw[ultra thick] (1) -- (2)
(4) -- (5);
\draw[dashed] (2) -- (3);
\draw[dashed] (5) -- (6);
\draw (3) edge[nonedge] (6);
\end{tikzpicture}
\caption*{\othree}
\end{subfigure}

\vspace*{7mm}

\begin{subfigure}[b]{\linewidth}
\centering
\begin{tikzpicture}[node distance=0.64cm]
\node[circ] (1) at (0,0) {};
\node[circ,above of=1] (2) {};
\node[circ,below right of =1] (3) {};
\node[circ,below left of=1] (4) {};

\draw[ultra thick] (1) -- (2)
(1) -- (3);
\draw[-](1) -- (4);
\end{tikzpicture}
\caption*{\ofour}
\end{subfigure}
\end{minipage}
\begin{minipage}[b]{0.23\textwidth}
\begin{subfigure}[b]{\linewidth}
\centering
\begin{tikzpicture}[node distance=0.75cm]
\node[circ] (1) at (0,0) {};
\node[circ,right of=1] (2) {};
\node[circ,right of=2] (3) {};
\node[circ,right of=3] (4) {};
\node[invisible] at (1,-.65) {};
\node[invisible] at (1,.5) {};

\draw[ultra thick] (1) -- (2)
(2) -- (3);
\draw[dashed] (3) -- (4);
\end{tikzpicture}
\caption*{\osix}
\end{subfigure}

\vspace*{7mm}

\begin{subfigure}[b]{\linewidth}

\end{subfigure}
\end{minipage}
\begin{minipage}[b]{0.23\textwidth}
\begin{subfigure}[b]{\linewidth}
\centering
\begin{tikzpicture}[node distance=0.75cm]
\node[circ] (1) at (0,0) {};
\node[circ,right of=1] (2) {};
\node[circ,right of=2] (3) {};
\node[circ,above of=2] (4) {};
\node[invisible] at (1,-.3) {};

\draw[ultra thick] (1) -- (2)
(2) -- (4);
\draw[dashed] (2) -- (3);
\end{tikzpicture}
\caption*{\oseven}
\end{subfigure}

\vspace*{7mm}

\begin{subfigure}[b]{\linewidth}
\centering
\begin{tikzpicture}[node distance=0.75cm]
\node[circ] (1) at (0,0) {};
\node[circ,right of=1] (2) {};
\node[circ,right of=2] (3) {};
\node[circ,right of=3] (4) {};
\node[invisible] at (1,-.65) {};
\node[invisible] at (1,.5) {};

\draw[ultra thick] (1) -- (2)
(3) -- (4);
\draw[dashed] (2) -- (3);
\end{tikzpicture}
\caption*{\oeight}
\end{subfigure}
\end{minipage}
\caption{The ST-configurations (the dashed lines indicate that the corresponding vertices are at distance 2 and the serpentine line indicates that the corresponding vertices can be identified). The thick edges correspond to the edges to contract in the proof of Theorem~\ref{thm:friendlytriple}(ii).} 
\label{fig:conf}
\end{figure}

\begin{proof}
Let $G$ be a graph. To prove $(i)$, let $D$ be a minimum semitotal dominating set of $G$ containing a friendly triple, that is, there is a subset of three vertices $x,y,z \in D$ such that $xy \in E(G)$ and $d_G(y,z) \leq 2$. Let $G'$ be the graph obtained from $G$ by the contraction of the edge $xy$, and let $v_{xy}$ be the vertex resulting from this contraction (note that $d_{G'}(z,v_{xy})\leq 2$). Then it is easy to see that $\left(D\setminus \{x,y\}\right)\cup\{v_{xy}\}$ is a semitotal dominating set of $G'$ of size $\gamma_{t2}(G)-1$. Conversely, assume that $G$ has an edge $xy$ whose contraction decreases the semitotal domination number of $G$. Let $G'$ and $v_{xy}$ be the graph and the vertex obtained from this contraction, respectively. Let $D'$ be a minimum semitotal dominating set of $G'$ (note that $|D'|\leq \gamma_{t2}(G)-1$). If $v_{xy}\in D'$ then there exists $z\in D'$ such that $d_{G'}(z,v_{xy})\leq 2$; in particular, at least one vertex of $\{x,y\}$ is at distance at most two from~$z$ in $G$. It follows that $D=(D'\setminus\{v_{xy}\})\cup\{x,y\}$ is a semitotal dominating set of $G$ containing a friendly triple, namely $x,y$ and $z$. Moreover, $D$ is minimum since $|D'|\leq \gamma_{t2}(G)-1$ and $|D|=|D'|+1$. Now assume that $v_{xy}\notin D'$. Since $v_{xy}$ is dominated in $D'$, at least one vertex of $\{x,y\}$ is dominated by a vertex of $D'$ in $G$, say $x$ without loss of generality. Consider the set $D=D'\cup\{x\}$ in $G$ (note that since $|D'|\leq \gamma_{t2}(G)-1$, $|D|\leq \gamma_{t2}(G)$). We will show that $D$ is a semitotal dominating set of $G$. It is easy to see that $D$ dominates every vertex of $G$ and $|D| = \gamma_{t2}(G)$. It remains to show that every vertex of $D$ has a witness. This holds for $x$: a witness for $x$ is any vertex $z\in D'$ (thus $z\in D$) that dominates $v_{xy}$ in $G'$ and is adjacent to $x$ in $G$ (such a vertex exists by the assumption that $x$ is dominated in $D'$). Now consider a vertex $p\in D\setminus\{x\}$ (note that $p\in D'$) and let $p'$ be a witness for $p$ in $D'$. If $pp'\in E(G)$ or if there exists a path $pup'$ in $G'$ with $u\neq v_{xy}$, then $p'$ is still a witness for $p$ in $D$. If a path of length at most two between $p$ and $p'$ in $G'$ contains $v_{xy}$ as an internal vertex, then $d_G(x,p)\leq 2$ and thus $x$ is a witness for $p$ in $D$. Hence every vertex in $D\setminus\{x\}$ has a witness and thus, $D$ is a semitotal dominating set of $G$. Finally observe that $D$ contains a friendly triple: indeed, denoting by $w$ a witness for $z$ in $D'$, we have that $w\in D$ and since $xz\in E(G)$, we conclude that $\{x,z,w\}$ is a friendly triple in~$D$ which completes the proof of~$(i)$.

We now proceed to the proof of~$(ii)$. If no minimum semitotal dominating set of $G$ contains a friendly triple then by $(i)$, $ct_{\gamma_{t2}}(G) >1$. Suppose that $G$ has a semitotal dominating set $S$ of size $\gamma_{t2}(G)+1$ such that $S$ contains an ST-configuration. It is straightforward to see that, for each configuration, the contraction of the two thick edges in Figure~\ref{fig:conf} reduces the size of $S$ by two. Moreover, after these contractions, $S$ remains a semitotal dominating set of the resulting graph. Thus we conclude that the contraction of two edges reduces the semitotal domination number of $G$ and hence $ct_{\gamma_{t2}}(G)=2$. 

For the other direction, let $e$ and $e'$ be two edges whose contraction decreases the semitotal domination number of $G$. In the remainder of this proof, we denote by $G'$ the graph obtained from $G$ by the contraction of the edges $e$ and $e'$ and by $D'$ a minimum semitotal dominating set of $G'$. Note that $|D'|=\gamma_{t2}(G)-1$ as $ct_{\gamma_{t2}}(G)>1$ and the contraction of a single edge decreases the semitotal domination number of a graph by at most one. We start with the following observation that will be useful throughout the proof.

\begin{observation}\label{obs:DhasP4}
Let $D$ be a semitotal dominating set of $G$. If $D$ contains a (not necessarily induced) $P_4$, then $D$ contains Configuration~\ofour~or Configuration~\oseven.
\end{observation}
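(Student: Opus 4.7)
The plan is a very short case analysis on whether the two non-consecutive endpoints of the $P_4$ are adjacent in $G$. Let $a, b, c, d \in D$ be the four (necessarily distinct) vertices of the $P_4$, with $ab, bc, cd \in E(G)$. From $bc, cd \in E(G)$ the triangle inequality gives $d_G(b, d) \leq 2$, so there are only two possibilities to dispose of.

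First I would handle the case $bd \in E(G)$. Then $b \in D$ has three distinct neighbours $a, c, d$, all of which lie in $D$, so these four vertices realize a $K_{1,3}$ centred at $b$: precisely Configuration \ofour. In the remaining case, $bd \notin E(G)$, which forces $d_G(b, d) = 2$ exactly. Combined with the edges $ab, bc \in E(G)$, this exhibits four vertices of $D$ in the pattern of Configuration \oseven, namely a $P_3$ with middle vertex $b$ and a fourth vertex $d$ at distance two from $b$.

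No step here is a genuine obstacle; the case split is exhaustive and the checks are immediate. The only subtlety worth stating explicitly is the convention that "$D$ contains Configuration $X$" allows additional edges or shorter distances among the four listed vertices than those drawn in Figure~\ref{fig:conf} (otherwise one would have to exclude, e.g., the edge $ac$ in the first case, and that would not match how the configurations are used later). With this convention, which is the one used throughout the rest of the paper, the two cases above cover every $P_4$ in $D$ and complete the proof of the observation.
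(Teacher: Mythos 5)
Your proof is correct and is essentially the paper's argument up to the symmetry of reversing the path: the paper splits on whether $ac\in E(G)$ (yielding a $K_{1,3}$ centred at $c$, i.e.\ Configuration~\ofour, or else $d_G(a,c)=2$ giving Configuration~\oseven{} centred at $c$), while you split on $bd$ and centre everything at $b$. The case analysis, the configurations obtained, and the remark about configurations being realised non-induced all match the paper.
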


Indeed, let $D$ be a semitotal dominating set of $G$ containing a (not necessarily induced) $P_4$ on vertex set $\{a,b,c,d\}$ with $ab,bc,cd\in E(G)$. If $ac\in E(G)$ then $\{a,b,c,d\}$ contains \ofour~in $D$ since $ac,bc,cd\in E(G)$. Otherwise $ac\notin E(G)$ in which case $d_G(a,c)=2$ as $b$ is a common neighbour of $a$ and $c$, and thus $\{a,b,c,d\}$ forms an \oseven~in $D$ as $bc,cd\in E(G)$.\\

We now consider the following cases.\\

\noindent{\bf Case 1.} \emph{$e$ and $e'$ share a vertex.} Let $e=xy$ and $e'=yz$ and let $v_{xyz}$ be the vertex of $G'$ resulting from the contraction of $e$ and $e'$. 

{\bf Case 1.1.} \textit{$v_{xyz}\notin D'$}. First note that, in this case, $D=D'\cup\{x,y\}$ is a semitotal dominating set of $G$ (of size $\gamma_{t2}(G)+1$). Indeed, $D$ is a dominating set since $D'$ is a dominating set of $G'$ and $y$ dominates $z$. Moreover, $x$ is a witness for $y$ (and vice versa) and if there is a vertex $p$ with witness $p'$ in $D'$ such that the unique path of length two connecting $p$ to $p'$ in $G'$ contained $v_{xyz}$, we have that $d_G(p,y)\leq 2$ and therefore $y$ is now a witness for $p$. Using similar arguments, we can show that $D'\cup\{y,z\}$ is also a semitotal dominating set of $G$.

Now since $D'$ is a dominating set of $G'$, at least one vertex of $\{x,y,z\}$ is dominated by $D'$ in $G$. Suppose first that $D'$ dominates $x$ in $G$ and consider the set $D=D'\cup\{x,y\}$. We next show that $D$ contains an ST-configuration. Let $w_1\in D'$ be a vertex that dominates $x$ and let $w_1'$ be a witness for $w_1$ in $D'$. If $d_G(w_1,w_1')=2$ then $\{x,y,w_1,w_1'\}$ forms an \osix~in $D$; otherwise, $d_G(w_1,w_1')=1$ in which case $D$ contains a $P_4$ on vertex set $\{x,y,w_1,w_1'\}$ and so by Observation~\ref{obs:DhasP4}, $D$ contains an \ofour~or an \oseven. We conclude similarly in the case where $D'$ dominates $y$ (respectively\ $z$) by considering the semitotal dominating set $D=D'\cup\{x,y\}$ (respectively\ $D=D'\cup\{y,z\}$).

{\bf Case 1.2.} \textit{$v_{xyz}\in D'$}. We first show that $D=(D'\setminus\{v_{xyz}\})\cup\{x,y,z\}$ is a semitotal dominating set of $G$ (note that $|D| = \gamma_{t2}(G)+1$). It is easy to see that $D$ is a dominating set. Furthermore, if $v_{xyz}$ was a witness for a vertex $p$ in $D'$ then in $G$, $p$ is at distance at most two to a vertex of $\{x,y,z\}$ and thus $p$ has a witness in $D$.

We next show that $D$ contains an ST-configuration. Let $w\in D'$ be a witness for $v_{xyz}$ in $D'$. Suppose first that $d_{G'}(w,v_{xyz})=1$. If $wy\in E(G)$ then $\{x,y,z,w\}$ forms an \ofour~in $D$; otherwise, $w$ is adjacent to $x$ or $z$, in which case $D$ contains a~$P_4$ on vertex set $\{x,y,z,w\}$ and so by Observation~\ref{obs:DhasP4}, $D$ contains an \ofour~or an \oseven. Now if $d_{G'}(w,v_{xyz})=2$ then $wx,wy,wz\notin E(G)$ and $w$ is at distance two to a vertex of $\{x,y,z\}$. Then either $d_G(w,y)=2$ in which case $\{x,y,z,w\}$ forms an \oseven~in $D$; otherwise, the same set forms an \osix~in $D$.\\

\noindent{\bf Case 2.} \emph{$e$ and $e'$ do not share a vertex.} Let $e=xy$ and $e'=zw$ and let $v_{xy}$ and $v_{zw}$ be the vertices of $G'$ resulting from the contraction of $e$ and $e'$, respectively.

{\bf Case 2.1.} \textit{$D'\cap \{v_{xy}, v_{zw}\}=\emptyset$}. Since $D'$ dominates $v_{xy}$ and $v_{zw}$, at least one of $\{x,y\}$ is dominated by $D'$ and the same holds for $\{z,w\}$. Assume without loss of generality that $x$ and $z$ are dominated by $D'$ and let $D=D'\cup\{x,z\}$. Note that $D$ is a semitotal dominating set of $G$ of size $\gamma_{t2}(G)+1$. We next show that $D$ contains an ST-configuration. Let $w_1$ (respectively\ $w_2$) be a vertex of $D$ that dominates $x$ (respectively\ $z$). If $w_1=w_2$, let $w'$ be a witness of $w_1$ in $D'$. Then $\{x,z,w_1,w'\}$ forms an \ofour (if $d_G(w',w_1)=1$) or an \oseven (if $d_G(w',w_1)=2$) in $D$. Suppose next that $w_1\neq w_2$ and let $w_1'$ (respectively\ $w_2'$) be a witness for $w_1$ (respectively\ $w_2$) in $D'$. Assume first that $w_1'=w_2'$. If $d_G(w_2,w_1')=1$ and $d_G(w_1,w_1')=1$, then $D$ contains a $P_4$ on vertex set $\{x,w_1,w'_1,w_2\}$ and so by Observation~\ref{obs:DhasP4}, $D$ contains an \ofour~or an \oseven. If $d_G(w_2,w'_1)=1$ and $d_G(w_1,w'_1)=2$ then $\{w_1,w'_1,w_2,z\}$ forms an \osix~in $D$. Finally, if both $d_G(w_2,w'_1)=2$ and $d_G(w_1,w'_1)=2$ then $\{x, w_1,w'_1,w_2,z\}$ forms an \othree~in $D$. Assume henceforth that $w_1'\neq w_2'$. If $w_1'=w_2$ and $w_1w_2\in E(G)$, then $D$ contains a $P_4$ on vertex set $\{x,w_1,w_2,z\}$ and so by Observation~\ref{obs:DhasP4}, $D$ contains an \ofour~or an \oseven. If $w_1'=w_2$ and $w_1w_2\notin E(G)$, then $\{x,w_1,w_2,z\}$ forms an \oeight~in $D$. Finally, assume that $w_1,w_1',w_2,w_2'$ are four distinct vertices in $G$. If $d_G(w_1,w_1')=1$ and $d_G(w_2,w_2')=1$, then $\{x,z,w_1,w_1',w_2,w_2'\}$ forms an \oone~in $D$. If $d_G(w_1,w_1')=1$ and $d_G(w_2,w_2')=2$, then $\{x,z,w_1,w_1',w_2,w_2'\}$ forms an \otwo~in $D$. Finally, if both $d_G(w_1,w_1')=2$ and $d_G(w_2,w_2')=2$, then $\{x,z,w_1,w_1',w_2,w_2'\}$ forms an \othree~in $D$.

{\bf Case 2.2.} \textit{$D'\cap \{v_{xy}, v_{zw}\}\neq\emptyset$}. If $|D'\cap \{v_{xy}, v_{yz}\}|=1$ then assume, without loss of generality, that $v_{xy}\in D'$. Since $v_{zw}\notin D'$, there exists $z'\in D'$ such that $z'$ is adjacent to $z$ or $w$, say $zz'\in E(G)$ without loss of generality. Consider the set $D=(D'\setminus \{v_{xy}\})\cup\{x,y,z\}$ (note that $|D|=\gamma_{t2}(G)+1$) and let us show that $D$ contains an ST-configuration. Let $p$ be a witness of $v_{xy}$ in $D'$. Assume without loss of generality that $d_G(p,y)\leq 2$. Suppose first that $z'=v_{xy}$. If $d_G(p,y)=1$ and $zy\in E(G)$, then $\{x,y,z,p\}$ forms an \ofour~in $D$. If $d_G(p,y)=1$ and $zy\notin E(G)$, then $d_G(y,z)\leq 2$ and therefore $\{x,y,z,p\}$ forms an \oseven~in $D$. If $d_G(p,y)=2$ and $zy\in E(G)$, then $\{x,y,z,p\}$ forms an \oseven~in $D$. Finally, if $d_G(p,y)=2$ and $zx\in E(G)$, then $\{x,y,z,p\}$ forms an \osix~in $D$. Second, suppose that $z'\neq v_{xy}$. If $p=z'$, we have two possibilities: either $d_G(p,y)=1$ in which case $D$ contains a $P_4$ on vertex set $\{x,y,p,z\}$ and so by Observation~\ref{obs:DhasP4}, $D$ contains an \ofour~or an \oseven; or $d_G(p,y)=2$ in which case $\{x,y,p,z\}$ forms an \oeight~in $D$. Assume henceforth that $p\neq z'$ and let $z''$ be a witness of $z'$ in $D'$. If $z''=v_{xy}$, then either $y$ or $x$ is a witness of $z'$ in $D$; and by symmetry, we can assume that $d_G(y,z')\leq 2$. If $d_G(y,z')=1$ then by Observation~\ref{obs:DhasP4}, $\{z,z',x,y\}$ forms an \ofour~or an~\oseven~in $D$. If $d_G(y,z')=2$ then the same set forms an \oeight~ in $D$. Hence, we can safely assume that $z''\neq v_{xy}$. Now note that $\{z,z',z''\}$ and $\{x,y,p\}$ form friendly triples in $D$ (recall that $d_G(p,y)\leq 2$) and it may still be the case that $p=z''$. However in this case, if $d_G(z',z'')=1$ and $d_G(z'',y)=1$ then by Observation~\ref{obs:DhasP4}, we have either an \ofour~or an \oseven~in $D$; if $d_G(z',z'')=1$ and $d_G(z'',y)=2$ then $\{z,z',z'',y\}$ forms an \osix~in $D$; and if $d_G(z',z'')=2$ and $d_G(z'',y)=1$ (respectively\ $d_G(z'',y)=2$), then $\{z,z',z'',y\}$ (respectively\ $\{z,z',z'',y,x\}$) forms an \oeight~(respectively \othree) in $D$. Thus we may assume that $p\neq z''$. Then $\{z,z',z''\}$ and $\{x,y,p\}$ are two disjoint friendly triples in $D$ and thus $\{z,z',z'',x,y,p\}$ forms either an \oone~(if $d_G(y,p)=d_G(z',z'')=1$), an \otwo~(if exactly one of $d_G(y,p)$ or $d_G(z',z'')$ equals two) or an \othree~(if $d_G(y,p)=d_G(z',z'')=2$).

We conclude the proof by considering the case where $\{v_{xy},v_{zw}\}\subseteq D'$ to which a similar case analysis applies. Consider the set $D=(D'\setminus\{ v_{xy},v_{zw}\})\cup\{x,y,z,w\}$ (note that $D$ is a semitotal dominating set of $G$ of size $\gamma_{t2}(G)+1$) and let us show that $D$ contains an ST-configuration. If a vertex of $\{x,y\}$ is adjacent to a vertex of $\{z,w\}$, then $D$ contains a $P_4$ and so by Observation~\ref{obs:DhasP4}, $\{x,y,z,w\}$ forms an \ofour~or an~\oseven~in $D$. If a vertex of $\{x,y\}$ is at distance exactly two from a vertex in $\{z,w\}$, then $D$ contains an \oeight. If neither of these conditions hold, that is, if $d_{G'}(v_{xy},v_{zw})\geq 3$, let $p$ (resp.\ $p'$) be a witness for $v_{xy}$ (resp.\ $v_{zw}$) in $D'$. Note that $p\neq v_{zw}$ and $p'\neq v_{xy}$ since $d_{G'}(v_{xy},v_{zw})\geq 3$. Hence, if $p=p'$ then $D$ contains an \othree~or an \osix; and if $p\neq p'$, then $\{x,y,p\}$ and $\{z,w,p'\}$ are two disjoint friendly triples in $D$ and thus, $\{x,y,z,w,p,p'\}$ forms either an \oone, an \otwo~or an \othree~in $D$, which concludes the proof.
\end{proof}

%===============================================================================

\section{The complexity of \contracstd{}}
\label{sec:probl}

In this section, we consider several graph classes and determine for each of them whether \contracstd{} is $\mathsf{(co)NP}$-hard (Section \ref{subsec:hard}) or polynomial-time solvable (Section \ref{subsec:poly}). Putting these results together then leads to our main theorem (Section~\ref{sec:main}).

\subsection{Hardness results}
\label{subsec:hard}

Similarly to the case of domination, we have the two following results.

\begin{theorem}[\app]
\label{thm:claw}
\contracstd{} is $\mathsf{coNP}$-hard when restricted to claw-free graphs.
\end{theorem}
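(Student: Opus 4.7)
By Theorem~\ref{thm:friendlytriple}(i), the problem \contracstd{} coincides with deciding whether the input graph admits a minimum semitotal dominating set containing a friendly triple. Hence coNP-hardness on claw-free graphs is equivalent to the NP-hardness (on claw-free graphs) of deciding whether \emph{some} minimum semitotal dominating set contains a friendly triple. My plan is to establish the latter via a polynomial reduction from a well-known NP-hard problem on claw-free graphs, such as \dom{}, \tdom{} or \stdom{} restricted to claw-free inputs (their hardness on line graphs, and hence on claw-free graphs, is classical).

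Given an instance $(H,k)$ of the source problem, I would construct a claw-free graph $G$ by attaching to $H$ a constant-size \emph{enforcer gadget} $\Gamma$, joined to $H$ through a clique interface so that no claw is ever introduced. The gadget has to be engineered so that three properties hold simultaneously:
\begin{itemize}
    \item \textbf{Size control}: $\gamma_{t2}(G)=\gamma_{t2}(H)+c$ for some fixed constant $c$, and every minimum semitotal dominating set $D$ of $G$ decomposes canonically as $D=D_{H}\cup D_{\Gamma}$ with $D_{H}$ a minimum semitotal dominating set of $H$ and $D_{\Gamma}\subseteq V(\Gamma)$ of a prescribed form.
    \item \textbf{Gadget neutrality}: no friendly triple is produced inside $\Gamma$ alone, irrespective of the choice of $D_{H}$, and no vertex of $D_{\Gamma}$ can participate in a friendly triple with a vertex outside $\Gamma$ unless the intended trigger condition is met.
    \item \textbf{Friendly-triple trigger}: a distinguished interface vertex of $\Gamma$ together with a fixed gadget neighbour and a vertex of $D_{H}$ forms a friendly triple in $D$ if and only if $(H,k)$ has the desired answer in the source problem.
\end{itemize}
Combined with Theorem~\ref{thm:friendlytriple}(i), this transfers the source hardness directly to \contracstd{} on claw-free graphs.

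The main difficulty lies in designing $\Gamma$ to be claw-free while simultaneously enforcing all of the above bookkeeping. Natural pendant forcing via stars immediately destroys claw-freeness, so the gadget must be built from triangles or small cliques glued to $H$ along shared edges. The non-trivial part of the verification will be the structural argument that no cheaper semitotal dominating set of $G$ exists (which requires understanding how distance-two witnessing can cross between $H$ and $\Gamma$) and that gadget-internal friendly triples are genuinely suppressed. As this setup parallels the corresponding hardness argument for \contracd{} on claw-free graphs in \cite{domcontract}, I expect the overall template to carry over, with the additional subtlety of accounting for the distance-two witness condition intrinsic to semitotal domination.
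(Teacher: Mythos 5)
There is a genuine gap, at two levels. First, the logical setup is inverted. By Theorem~\ref{thm:friendlytriple}(i), \contracstd{} asks whether \emph{some} minimum semitotal dominating set contains a friendly triple; showing that \emph{this} question is NP-hard would prove NP-hardness, not the claimed coNP-hardness. For coNP-hardness you must show that the complementary, universally quantified statement --- ``\emph{every} minimum semitotal dominating set of $G$ is friendly-triple-free'' --- is NP-hard, i.e., your reduction must send \yes-instances of the NP-hard source problem to \no-instances of \contracstd{}. Your ``friendly-triple trigger'', which fires exactly when $(H,k)$ has the desired (\yes) answer, points the reduction the wrong way, and it is far from clear how a local gadget could instead detect the \emph{non-existence} of a small dominating set of $H$.

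Second, even granting the direction, the proposal contains no construction: the three properties you require of $\Gamma$ (size control, neutrality, trigger) \emph{are} the theorem, and the enforcer-gadget template is unlikely to deliver them, because whether some minimum semitotal dominating set of $H$ already contains a friendly triple is a global property of $H$ that a constant-size attachment cannot switch off. One must control the structure of \emph{all} minimum semitotal dominating sets, which forces building the instance from scratch. That is what the paper does: it reduces from \textsc{Positive Exactly 3-Bounded 1-In-3 3-Sat}, building each variable gadget from seven claw-free ``long paws'' (each forcing two prescribed vertices into every minimum semitotal dominating set, so each variable gadget contributes exactly $14$ vertices) together with clause gadgets $G_c^T$, $G_c^F$, and proves $\gamma_{t2}(G)=14|X|+|C|$ iff $\Phi$ is $1$-in-$3$ satisfiable iff $G$ is a \no-instance. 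The attachment-style transfer you describe does appear in the paper, but only as Lemma~\ref{lem:cycles}, whose tree gadget contains degree-four vertices and hence, as you anticipate, cannot be used in the claw-free setting.
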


\begin{theorem}[\app]
\label{thm:2P3}
\contracstd{} is $\mathsf{coNP}$-hard when restricted to $2P_3$-free graphs.
\end{theorem}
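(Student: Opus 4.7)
The plan is to give a polynomial-time reduction from a suitable NP-hard problem $\Pi$ to the complement of \contracstd{} restricted to $2P_3$-free graphs, thus yielding coNP-hardness. Natural candidates for $\Pi$ are a SAT-variant or the corresponding blocker problem for the ordinary domination number, since \contracd{} is already known to be coNP-hard on $2P_3$-free graphs (see \cite{P3kP2}), and the constructions used there can be adapted to fit the friendly-triple criterion of Theorem~\ref{thm:friendlytriple}(i). The goal is to build, from an instance $I$ of $\Pi$, a $2P_3$-free graph $G_I$ such that $I$ is a \yes-instance if and only if every minimum semitotal dominating set of $G_I$ is friendly-triple-free; by Theorem~\ref{thm:friendlytriple}(i), this is exactly the condition $ct_{\gamma_{t2}}(G_I)>1$.

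Concretely, I would build $G_I$ from three parts. A \emph{selection part} contains one vertex per element or variable of $I$, each equipped with private forcing gadgets (pendants, or small pair-inducing attachments) whose role is to force any minimum semitotal dominating set to pick a controlled number of selection vertices and to pair each of them with a dedicated witness at distance at most two. A \emph{constraint part} attaches vertices encoding the clauses or sets of $I$ to the appropriate selection vertices, so that a minimum semitotal dominating set induces a feasible solution of $\Pi$. Finally, a \emph{stabilising clique} $K$, made complete to essentially all other vertices, ensures $2P_3$-freeness: any two vertex-disjoint induced $P_3$'s would have to use vertices outside $K$, but the completeness condition forbids such $P_3$'s from being mutually nonadjacent. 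The size of $K$ is kept small relative to $\gamma_{t2}(G_I)$ so that $K$ contributes only a controlled number of vertices to any minimum semitotal dominating set.

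The analysis would first pin down $\gamma_{t2}(G_I)$ as a function of the parameters of $I$ and establish that every minimum semitotal dominating set decomposes canonically into pairs (selection vertex, private witness) dictated by the forcing gadgets. One then shows that such a set avoids friendly triples exactly when the corresponding selection is a valid solution of $\Pi$: a violated constraint would force two pairs to come too close, producing a third vertex within distance two of some edge of $D$ and hence a friendly triple. Combined with Theorem~\ref{thm:friendlytriple}(i), this yields coNP-hardness. The main obstacle will be enforcing $2P_3$-freeness simultaneously with a complete structural characterisation of \emph{every} minimum semitotal dominating set: spurious optima that bypass the intended selection mechanism could artificially create or destroy friendly triples and break the correspondence with $\Pi$. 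The precise shape of the pendant-witness gadgets and their interaction with the stabilising clique will therefore have to be tuned so that the pair-structure of any friendly-triple-free minimum semitotal dominating set is in bijection with the valid solutions of $\Pi$.
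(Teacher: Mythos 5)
Your proposal is a reduction template rather than a proof: no source problem is fixed, no gadget is specified, and every step of the analysis is stated in the conditional. The obstacle you yourself flag at the end --- controlling \emph{every} minimum semitotal dominating set while keeping the graph $2P_3$-free --- is exactly where the mathematical content lies, and it is not addressed. In particular, your plan asks the reduction to enforce directly that all minimum semitotal dominating sets are friendly-triple-free. A friendly triple involves a distance-two condition, so ruling one out in every optimum requires reasoning about pairs of solution vertices with a common neighbour anywhere in the graph; in a $2P_3$-free graph, where a ``stabilising clique'' complete to almost everything makes the diameter tiny, it is very hard to keep solution vertices pairwise far apart while still encoding nontrivial constraints. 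Nothing in the proposal explains how the pendant/witness gadgets would coexist with that clique (pendants hanging outside a dominating clique are precisely what tends to create induced $2P_3$'s), nor why spurious optima bypassing the selection mechanism cannot arise.

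The paper's proof decomposes the difficulty differently, and this decomposition is the missing idea. First, it reduces \textsc{Positive 1-In-3 3-Sat} to the auxiliary problem of deciding whether \emph{every} minimum semitotal dominating set is \emph{independent} --- an edge condition, far easier to control than friendly triples: variables become triangles with truth vertices $T_x,F_x$, clauses become $K_5$'s, and the union of all clause gadgets is made a clique, which is what yields $2P_3$-freeness. Second --- and this is the step your proposal has no analogue of --- it proves a structural lemma specific to $2P_3$-free graphs: if some minimum semitotal dominating set contains an edge, then some minimum semitotal dominating set contains a friendly triple, so the answer to \textsc{1-Edge Contraction}$(\gamma_{t2})$ is the negation of the answer to the auxiliary problem. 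That lemma is a genuine case analysis (the edge $xy$ and the nearest other solution vertex $z$ satisfy $d(x,z)\leq 5$ by $2P_3$-freeness, and private-neighbour exchange arguments handle each distance). Without this lemma, or some substitute for it, the link between your constraint encoding and the criterion of Theorem~\ref{thm:friendlytriple}(i) is not established; as written, the proposal therefore has a gap at its core rather than at its margins.
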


We next focus on $\mathcal{C}$-free graphs, where $\mathcal{C}$ is a (possibly infinite) family of cycles, and show a relation between \contracd{} and \contracstd{}.

\begin{lemma}
\label{lem:cycles}
Let $\mathcal{C}$ be a (possibly infinite) family of cycles. If \contracd{} is $\mathsf{NP}$-hard when restricted to $\mathcal{C}$-free graphs then \contracstd{} is $\mathsf{NP}$-hard when restricted to $\mathcal{C}$-free graphs.
\end{lemma}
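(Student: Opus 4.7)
The plan is to establish the lemma by a polynomial-time many-one reduction from \contracd{} restricted to $\mathcal{C}$-free graphs to \contracstd{} restricted to the same class. Given a $\mathcal{C}$-free input $G$ of \contracd, I would construct $G'$ by attaching to each vertex of $G$ a prescribed tree-shaped gadget (for instance, a short pendant path). Because the attached structure is a forest, no new cycle is created during the construction, and so $G'$ is $\mathcal{C}$-free whenever $G$ is.

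The gadget is engineered so that two structural properties hold. First, every minimum semitotal dominating set of $G'$ can be written canonically as $D\cup F_D$, where $D$ is a minimum dominating set of $G$ and $F_D\subseteq V(G')\setminus V(G)$ is a set of gadget vertices determined by $D$. Second, the set $D\cup F_D$ contains a friendly triple in $G'$ if and only if $G[D]$ contains an edge, i.e.\ $D$ is not independent in $G$. Given these two properties, Theorem~\ref{thm:friendlytriple}(i) and Theorem~\ref{theorem:contracdom}(i) immediately yield the required equivalence $ct_{\gamma_{t2}}(G')\leq 1 \iff ct_\gamma(G)\leq 1$, and hence the $\mathsf{NP}$-hardness of \contracd{} on $\mathcal{C}$-free graphs transfers to \contracstd{} on $\mathcal{C}$-free graphs. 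The first property is proved by a local analysis of the gadget, showing that a fixed number of gadget vertices must lie in every minimum semitotal dominating set of $G'$, so that the cost budget forces $D'\cap V(G)$ to be a minimum dominating set of $G$; the second property is then verified by a case distinction on where the edge of a potential friendly triple in $D\cup F_D$ can lie, either inside $G[D]$ or involving a gadget vertex.

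The main obstacle is the gadget design itself. On the one hand it must be rigid enough to force every minimum semitotal dominating set of $G'$ to take the canonical form $D\cup F_D$; on the other hand it must be structurally inert enough not to introduce spurious friendly triples — for instance through pendant vertices lying at distance two from vertices of $V(G)$ in $G'$ — which would otherwise make $ct_{\gamma_{t2}}(G')\leq 1$ hold unconditionally and thereby break the correspondence with $ct_\gamma(G)$. Balancing these two requirements while keeping the gadget a tree (so that $\mathcal{C}$-freeness is preserved for every family of cycles $\mathcal{C}$) is the technical heart of the argument.
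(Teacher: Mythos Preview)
Your overall strategy---attach a tree gadget to every vertex of $G$, preserve $\mathcal{C}$-freeness because trees create no new cycles, and transfer the hardness via Theorems~\ref{theorem:contracdom}(i) and~\ref{thm:friendlytriple}(i)---is exactly what the paper does. However, the two structural properties you plan to establish are stronger than what the paper's gadget actually delivers, and the gap matters.

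Your first property asserts that \emph{every} minimum semitotal dominating set of $G'$ decomposes as $D\cup F_D$ with $D$ a minimum dominating set of $G$. The paper's gadget (a tree $T_v$ with two $3$-leaf stars centred at vertices $b_v,d_v$ along a path $a_v b_v c_v d_v$, with $v$ joined to $a_v$) does \emph{not} satisfy this: a minimum semitotal dominating set may contain the attachment vertex $a_v$ instead of a vertex of $G$, and the friendly triple witnessing $ct_{\gamma_{t2}}(G')\le 1$ may have its edge equal to $a_vb_v$, entirely inside the gadget. The paper does not rule this out by gadget design; instead it argues, in that case, that $v\notin D$, that no $G$-neighbour $w$ of $v$ lies in $D$ (else $D\setminus\{a_v\}$ would be smaller), and then uses a vertex dominating $w$ to either exhibit an edge in a minimum dominating set of $G$ directly or to build a new minimum semitotal dominating set whose friendly triple has its edge in $V(G)$. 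This repair step is the technical heart of the converse direction, and your proposal does not account for it.

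Your second property---that the canonical set $D\cup F_D$ contains a friendly triple iff $G[D]$ has an edge---would follow once canonical form is enforced, but the real difficulty is that canonical form cannot be assumed for an \emph{arbitrary} minimum semitotal dominating set with a friendly triple, and the transformation to canonical form need not preserve the friendly triple. A ``short pendant path'' will almost certainly not be rigid enough: the paper needs the two triples of leaves precisely to force $b_v,d_v\in D$ unconditionally and to keep the forced gadget vertices mutually non-adjacent and at distance exactly two, so that they never form an edge of a friendly triple on their own. You should either specify a concrete gadget and verify both directions (including the non-canonical case), or follow the paper's route and replace your first property by the weaker statement that \emph{some} minimum semitotal dominating set is canonical, and then handle the non-canonical friendly triples by a separate exchange argument.
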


\begin{proof}
Let $G$ be a $\mathcal{C}$-free graph. We construct a $\mathcal{C}$-free graph $T(G)$ such that $G$ is a \yes-instance for \contracd{} if and only if $T(G)$ is a \yes-instance for \contracstd{} as follows. For every vertex $v \in V(G)$, we attach a copy of the tree $T_v$ depicted in Figure~\ref{fig:Tv} by connecting $v$ to $a_v$. We let $T(G)$  be the resulting graph. Clearly, $T(G)$ is $\mathcal{C}$-free.

\begin{figure}[htb]
    \centering
\begin{tikzpicture}{node distance=1cm}
\node[circ,label=above:{\small $a_v$}] (av) at (0,0) {};
\node[circ,label=above:{\small $b_v$}] (bv) at (1.5,0) {};
\node[circ,below left of=bv,label=below:{\small $y_1^v$}] (y1v) {};
\node[circ,below of=bv,label=below:{\small $y_2^v$}] (y2v) {};
\node[circ,below right of=bv,label=below:{\small $y_3^v$}] (y3v) {};
\node[circ,label=above:{\small $c_v$}] at (3,0) {};
\node[circ,label=above:{\small $d_v$}] (dv) at (4.5,0) {};
\node[circ,below left of=dv,label=below:{\small $x_1^v$}] (x1v) {};
\node[circ,below of=dv,label=below:{\small $x_2^v$}] (x2v) {};
\node[circ,below right of=dv,label=below:{\small $x_3^v$}] (x3v) {};

\draw[-] (av) -- (dv)
(bv) -- (y1v)
(bv) -- (y2v)
(bv) -- (y3v)
(dv) -- (x1v)
(dv) -- (x2v)
(dv) -- (x3v);
\end{tikzpicture}
\caption{The tree $T_v$.}
\label{fig:Tv}
\end{figure}

Let us first show that $\gamma_{t2}(T(G)) = \gamma(G) + 2|V(G)|$. Clearly, if $D$ is a minimum dominating set of G then $D \cup \{b_v,d_v\colon\,v \in V(G)\}$ is a semitotal dominating set of $T(G)$. Thus, $\gamma_{t2}(T(G)) \leq \gamma(G) + 2|V(G)|$. Let $D$ be a minimum semitotal dominating set of $T(G)$. Now $b_v \in D$ for every $v \in V(G)$. Indeed, if $b_v$ were not in $D$ then $y_1^v,y_2^v,y_3^v \in D$, since $y_i^v$ must be dominated for every $i\in [3]$. But then $(D\setminus\{y_1^v,y_2^v\})\cup \{b_v\}$ is a semitotal dominating set of $T(G)$ of size strictly less than $|D|$, a contradiction to the minimality of $D$. Using similar arguments, we can show that $d_v \in D$ for every $v \in V(G)$. This implies that $c_v\notin D$ for every $v \in V(G)$. Further, if $a_v \in D$ for some $v\in V(G)$ then $(D \setminus \{a_v\}) \cup \{v\}$ is a semitotal dominating set of $T(G)$ of size at most $|D|$. Thus, $T(G)$ has a minimum semitotal dominating set $D$ such that $D \cap \{a_v\colon\, v\in V(G)\} = \emptyset$ and since $b_v,d_v \in D$ for every $v \in V(G)$, in fact $D \cap V(T_v) = \{b_v,d_v\}$ for every $v \in V(G)$. From now on, we assume that $D$ is such a minimum semitotal dominating set and claim that $D \setminus \{b_v,d_v, v\in V(G)\}$ is a dominating set of $G$. Indeed, since for every $v \in V(G)$, $a_v \notin D$ necessarily $D \cap (N_{T(G)}[v] \setminus \{a_v\}) \neq \emptyset$ for otherwise $v$ would not be dominated in $D$. Thus $\gamma(G) \leq \gamma_{t2}(T(G)) - 2|V(G)|$ and combined with the above inequality, we conclude that in fact equality holds.

Now assume that $G$ is a \yes-instance for \contracd{} and let $D$ be a minimum dominating set of $G$ containing at least one edge $xy \in E(G)$ (see Theorem~\ref{theorem:contracdom}(i)). Then clearly $D \cup \{b_v,d_v\colon\, v \in V(G)\}$ is a minimum semitotal dominating set containing a friendly triple, namely $x,y,b_y$.

Conversely, assume that $T(G)$ is a \yes-instance for \contracstd{} and let $D$ be a minimum semitotal dominating set containing a friendly triple (see Theorem~\ref{thm:friendlytriple}), say $x,y,z$ where $x$ and $y$ are adjacent and $d_{T(G)}(y,z) \leq 2$. Now observe that either both 
$x$ and $y$ belong to $V(G)$, or there exists $v \in V(G)$ such that both $x$ and $y$ belong to $V(T_v)$. Indeed, if $x \in V(G)$ and $y \in V(T_v)$ for some $v \in V(G)$, then necessarily $v = x$ and $y = a_v$. But then since $b_v \in D$ by the above, $D \setminus \{a_v\}$ is a semitotal dominating set of $T(G)$ of size strictly less that $|D|$, a contradiction to the minimality of $D$. Now if both $x$ and $y$ belong to $V(G)$ then by the above, $(D \cap V(G)) \cup \{v: a_v \in D\}$ is a minimum dominating set of $G$ containing an edge, namely $xy$. Next, assume that there exists $v \in V(G)$ such that $x,y \in V(T_v)$. As shown above, $\{x,y\} \cap \{y_1^v,y_2^v,y_3^v,x_1^v,x_2^v,x_3^v,c_v\} = \emptyset$ (it would otherwise contradict the minimality of $D$ as $b_v,d_v \in D$) and so necessarily $\{x,y\} = \{a_v,b_v\}$. But then $v \notin D$ for otherwise $D \setminus \{a_v\}$ would be a semitotal dominating set of $T(G)$ of size strictly less than $|D|$. Now consider a neighbour $w \in V(G)$ of $v$. Then $w \notin D$ for otherwise $D \setminus \{a_v\}$ would be a semitotal dominating set of $T(G)$ of size strictly less than $|D|$, a contradiction to the minimality of $D$. But as $w$ is dominated in $D$, $w$ has a neighbour $u$ in $D$. If $u = a_w$ then by the above $(D \cap V(G)) \cup \{t: a_t \in D\}$ is a minimum dominating set of $G$ containing an edge, namely $wv$. Otherwise $u \in V(G)$ and so $(D \setminus \{a_v\}) \cup \{w\}$ is a minimum semitotal dominating set of $T(G)$ containing a friendly triple whose edge lies in $V(G)$, namely $u,w,b_w$, and we proceed as previously. Since in any case we can construct a minimum dominating set of $G$ containing an edge, we conclude by Theorem~\ref{theorem:contracdom}(i) that $G$ is a \yes-instance for \contracd{}, which completes the proof.  
\end{proof}

In \cite{domcontract}, the authors showed the following result for \contracd{}.

\begin{theorem}[\cite{domcontract}]
\label{theorem:largegirth}
\contracd{} is $\mathsf{NP}$-hard when restricted to $\{C_3,\ldots,C_\ell\}$-free graphs for any $\ell \geq 3$, and when restricted to bipartite graphs.
\end{theorem}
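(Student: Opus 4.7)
The statement is cited from \cite{domcontract}, but the natural approach is a reduction from \textsc{Dominating Set} itself, which is known to remain $\mathsf{NP}$-hard on bipartite graphs and on graphs of girth at least $\ell+1$ for every fixed $\ell \geq 3$. By Theorem~\ref{theorem:contracdom}(i), the question ``$ct_\gamma(G) \leq 1$?'' is equivalent to asking whether $G$ admits a minimum dominating set that is not independent. So, starting from an instance $(H,k)$ of \textsc{Dominating Set} in the target class, I would construct a graph $G$ in the same class such that $H$ has a dominating set of size at most $k$ if and only if $G$ has a minimum dominating set containing at least one edge.

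The construction I have in mind attaches to each vertex $v$ of $H$ a small gadget $F_v$ (for instance a cherry or a short pendant path of carefully chosen parity) with the following properties: (i) every minimum dominating set of $G$ contains a prescribed vertex $f_v \in F_v$, contributing a fixed additive constant $c|V(H)|$ to $\gamma(G)$; (ii) $f_v$ dominates all remaining vertices of $F_v$, so no further vertex of $F_v$ is ever needed; and (iii) the vertex $v$ itself is dominated in any minimum dominating set either by a neighbour in $V(H)$ or by $f_v$. Consequently $\gamma(G) = \gamma(H) + c|V(H)|$. In addition I would add a distinguished gadget anchored at a single vertex $v^\star$ whose only role is to force, once any dominating set of $H$ of size $\leq k$ is realised inside $G$, the existence of an edge between two vertices of the resulting minimum dominating set (typically between $f_{v^\star}$ or a sibling vertex inside the distinguished gadget and a vertex of $V(H) \cap D$).

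The equivalence then goes as follows. For the forward direction, a dominating set $D_H \subseteq V(H)$ of size $\leq k$ extends, by adjoining the forced vertices $\{f_v : v \in V(H)\}$, to a minimum dominating set $D_G$ of $G$; the distinguished gadget guarantees that $D_G$ contains an adjacent pair, so $(G)$ is a \yes-instance for \contracd{}. For the converse direction, I would argue that any minimum dominating set $D_G$ of $G$ containing an edge must, by the rigidity of the gadgets, satisfy $D_G \cap F_v = \{f_v\}$ for all $v \in V(H)$, so that $D_G \cap V(H)$ is a dominating set of $H$ of size at most $\gamma(G) - c|V(H)| \leq k$.

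The main obstacle will be designing the gadgets so as to simultaneously (a) respect the class constraint (bipartiteness, or girth $\geq \ell+1$), (b) rigidly force the prescribed vertices into every minimum dominating set, and (c) preclude ``spurious'' minimum dominating sets of $G$ that would contain an edge lying entirely inside a gadget and therefore not correspond to any small dominating set of $H$. For bipartite graphs one can exploit bipartite tree gadgets freely, but for $\{C_3,\ldots,C_\ell\}$-free graphs the gadgets must themselves be acyclic (or of girth exceeding $\ell$) and must be attached to $H$ only at a single vertex in order not to shorten any cycle of $H$; the careful bookkeeping of private neighbourhoods inside each $F_v$ and inside the distinguished gadget, under these geometric constraints, is where the bulk of the technical work lies.
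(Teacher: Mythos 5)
First, a point of comparison: this theorem is not proved in the paper at all --- it is imported verbatim from \cite{domcontract} --- so there is no internal proof to measure your sketch against; it has to stand on its own, and as written it has a genuine gap.

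The gap is that the threshold $k$ of the \textsc{Dominating Set} instance $(H,k)$ never actually enters your construction. You arrange $\gamma(G)=\gamma(H)+c|V(H)|$, a quantity depending on $H$ alone, and then ask (via Theorem~\ref{theorem:contracdom}(i)) whether $G$ has a non-independent minimum dominating set --- again a property of $H$ alone. The constructed instance therefore cannot distinguish $(H,k)$ from $(H,k')$ and cannot decide whether $\gamma(H)\le k$. The sentence promising that the distinguished gadget forces an edge ``once any dominating set of $H$ of size $\le k$ is realised inside $G$'' is precisely where a mechanism coupling $k$ to the structure of the \emph{minimum} dominating sets of $G$ is needed, and none is specified. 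The construction from \cite{domcontract} that this paper reuses in the proof of Theorem~\ref{thm:P6P4+P2} shows what such a mechanism looks like: one takes $\ell+1$ copies $V_0,\dots,V_\ell$ of $V(H)$ together with vertices $x_0,\dots,x_\ell,y$, so that the domination number of the new graph equals $\min\{\gamma(H)+1,\ell+1\}$ and a minimum dominating set containing an edge exists exactly when $\gamma(H)\le\ell$; the threshold is hard-wired into the number of copies, which is what your outline lacks. Moreover, that construction contains a large clique, so it is neither bipartite nor of girth greater than $\ell$; redesigning it under those constraints is the entire content of the cited theorem, not the routine ``bookkeeping of private neighbourhoods'' you defer. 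A smaller inconsistency: if $F_v$ is attached at $v$ and its forced vertex $f_v$ may dominate $v$ (your property (iii)), then $\{f_v\colon v\in V(H)\}$ already dominates all of $G$, giving $\gamma(G)=c|V(H)|$ and contradicting $\gamma(G)=\gamma(H)+c|V(H)|$; the forced vertices must lie at distance at least two from $v$, as $b_v$ and $d_v$ do in the tree $T_v$ of Lemma~\ref{lem:cycles}.
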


By combining Lemma~\ref{lem:cycles} and Theorem~\ref{theorem:largegirth}, we obtain the following.

\begin{theorem}
\contracstd{} is $\mathsf{NP}$-hard when restricted to $\{C_3,\ldots,C_\ell\}$-free graphs for any $\ell \geq 3$, and when restricted to bipartite graphs.
\end{theorem}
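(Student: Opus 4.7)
The plan is to read the statement as an immediate corollary of Lemma \ref{lem:cycles} combined with Theorem \ref{theorem:largegirth}, applying the lemma once for each of the two hardness claims with an appropriate choice of the cycle family $\mathcal{C}$.

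For the first claim, I would fix an integer $\ell \geq 3$ and set $\mathcal{C} = \{C_3, C_4, \ldots, C_\ell\}$. Theorem \ref{theorem:largegirth} asserts that \contracd{} is $\mathsf{NP}$-hard on the class of $\mathcal{C}$-free graphs, which is precisely the hypothesis of Lemma \ref{lem:cycles}; its conclusion then transfers the hardness to \contracstd{} on the same class.

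For the bipartite claim, I would invoke the classical characterisation that a graph is bipartite if and only if it contains no odd cycle, i.e.\ the class of bipartite graphs coincides with the class of $\mathcal{C}^{\mathrm{odd}}$-free graphs for $\mathcal{C}^{\mathrm{odd}} = \{C_3, C_5, C_7, \ldots\}$. Since Lemma \ref{lem:cycles} is explicitly stated for possibly infinite families of cycles, it applies with $\mathcal{C} = \mathcal{C}^{\mathrm{odd}}$; the hypothesis is supplied by the bipartite clause of Theorem \ref{theorem:largegirth}, and the conclusion is exactly the desired hardness of \contracstd{} on bipartite graphs.

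The only point that requires a brief sanity check, rather than a genuine obstacle, is that the reduction $G \mapsto T(G)$ used in the proof of Lemma \ref{lem:cycles} preserves bipartiteness: each gadget $T_v$ is a tree, glued to $G$ by the single pendant edge $va_v$, so any bipartition of $G$ extends uniquely to a bipartition of $T(G)$, ensuring that $T(G)$ is bipartite whenever $G$ is. Hence no new argument beyond the two cited results is needed, and the proof reduces to these two one-line applications of Lemma \ref{lem:cycles}.
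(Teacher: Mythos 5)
Your proposal is correct and matches the paper's own derivation, which likewise obtains the theorem by combining Lemma~\ref{lem:cycles} with Theorem~\ref{theorem:largegirth}, using $\mathcal{C}=\{C_3,\dots,C_\ell\}$ for the first claim and viewing bipartite graphs as the (infinite) family of odd-cycle-free graphs for the second. Your extra remark that $T(G)$ stays bipartite is a harmless redundancy, since the lemma already guarantees $T(G)$ is $\mathcal{C}$-free for $\mathcal{C}$ the odd cycles.
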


Finally, as for the case of domination, we can show the following. 

\begin{theorem}[\app]
\label{thm:P6P4+P2}
\contracstd{} is $\mathsf{NP}$-hard when restricted to $\{P_6,P_4+P_2\}$-free chordal graphs.
\end{theorem}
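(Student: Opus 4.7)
The plan is to establish NP-hardness via a reduction from \contracd{} restricted to $\{P_6,P_4+P_2\}$-free chordal graphs, whose hardness was established in prior work (see \cite{domcontract,P3kP2}). A direct application of Lemma~\ref{lem:cycles} fails because the tree $T_v$ it attaches to every vertex contains an induced $P_4$: combined with any edge incident to $v$ in $G$ this produces an induced $P_6$ in $T(G)$, and combined with any disjoint edge an induced $P_4+P_2$. Any usable reduction therefore has to keep the forcing gadget much more local, with every new vertex at distance at most one from the corresponding $v\in V(G)$.

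Concretely, for each $v\in V(G)$ I would attach a local gadget $H_v$ sharing only $v$ with the rest of the graph, designed so that a single designated vertex $b_v$ adjacent to $v$ is forced into every minimum semitotal dominating set. A candidate is to make the vertices of $H_v\setminus\{v\}$ form a clique all adjacent to $v$, and introduce a pendant-like structure forcing $b_v\in D^*$, while routing every possible ``external distance'' through $v$. Because $H_v$ has small diameter and only meets the rest of $G^*$ at $v$, induced paths entering $H_v$ from $V(G)\setminus\{v\}$ gain at most one gadget vertex, so case analysis shows that the resulting graph $G^*$ remains chordal and $\{P_6,P_4+P_2\}$-free. A counting argument then yields $\gamma_{t2}(G^*)=\gamma(G)+|V(G)|$, and every minimum semitotal dominating set of $G^*$ has the canonical form $D\cup\{b_v\colon v\in V(G)\}$ for some minimum dominating set $D$ of $G$. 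Combining Theorem~\ref{theorem:contracdom}(i) and Theorem~\ref{thm:friendlytriple}(i), the existence of a friendly triple in a minimum semitotal dominating set of $G^*$ is shown to be equivalent to the existence of an edge in a minimum dominating set of $G$, yielding the reduction.

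The main obstacle is the joint design of the local gadgets $H_v$. They must be rich enough to (i) force a designated vertex $b_v$ into every minimum semitotal dominating set, and (ii) make $b_v$ a witness for $v$ whenever $v$ is selected; yet poor enough to (iii) preserve $\{P_6,P_4+P_2\}$-freeness and chordality when attached to every vertex of an arbitrary input graph in the class, and (iv) admit no \emph{parasitic} friendly triple -- e.g.\ of the form $\{b_u,b_v,p\}$ -- that fails to correspond to an edge of a minimum dominating set of $G$. Balancing (i)--(ii) against (iii) is particularly delicate: the most natural choice of a pendant attached to $b_v$ at distance two from $v$ immediately creates induced $P_6$'s as soon as $v$ lies on an induced $P_4$ of $G$, forcing one to replace the pendant by a denser substructure (for instance a small clique of private neighbours of $b_v$). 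Ruling out parasitic friendly triples under this denser design then requires a careful case analysis on the possible intersections of minimum semitotal dominating sets with the gadgets, and constitutes the technical core of the proof.
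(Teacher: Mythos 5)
There is a genuine gap: your argument never actually produces the gadget $H_v$ on which everything rests. You correctly identify that the tree $T_v$ from Lemma~\ref{lem:cycles} cannot be used here, and you list the four properties (i)--(iv) a replacement would need, but the proof stops at the point where the real work begins. Worse, the constraints appear to be jointly unsatisfiable for this graph class along the lines you sketch. To force a designated vertex $b_v$ into every minimum semitotal dominating set, $b_v$ must have private structure not dominated by $v$ itself, i.e.\ vertices at distance two from $v$; but then for any neighbour $a$ of $v$ in $G$ and any such vertex $k$, the path $k\,b_v\,v\,a$ is an induced $P_4$ whose endpoint $k$ sees nothing else, so any edge of $G$ at distance at least two from $\{v,a\}$ yields an induced $P_4+P_2$, and any induced $P_4$ of $G$ starting at $v$ yields an induced $P_6$. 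Replacing the pendant by a clique of private neighbours of $b_v$, as you suggest, does not help: a single vertex of that clique still plays the role of $k$ above. A depth-one gadget (all new vertices adjacent to $v$) avoids this but then forces nothing, since $v$ alone dominates it. So the per-vertex attachment strategy that works for Lemma~\ref{lem:cycles} on graphs with forbidden cycles is structurally blocked on $\{P_6,P_4+P_2\}$-free graphs, and the ``technical core'' you defer is not merely delicate but, as proposed, a dead end.

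The paper sidesteps this entirely by not reducing from \contracd{} with local gadgets at all. It reduces directly from \textsc{Dominating Set}, reusing the global construction of \cite[Theorem 3.1]{domcontract}: $\ell+1$ copies $V_0,\dots,V_\ell$ of $V(G)$ with $V_0\cup\{x_0\}$ a clique, apex vertices $x_1,\dots,x_\ell$ joined to $V_0\cup V_i$, and a single pendant $y$ on $x_0$. One then shows $\gamma_{t2}(G')=\min\{\gamma(G)+1,\ell+1\}$ and that a minimum semitotal dominating set contains a friendly triple if and only if $\gamma(G)\le\ell$, invoking Theorem~\ref{thm:friendlytriple}(i). Since that reference already establishes that $G'$ is a $\{P_6,P_4+P_2\}$-free chordal graph, no new gadget design or forbidden-subgraph case analysis is needed. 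If you want to salvage your approach, you would have to abandon vertex-local forcing and find a comparable global construction.
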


%===============================================================================

\subsection{Polynomial cases}
\label{subsec:poly}

We now focus on graph classes for which \contracstd{} can be solved in polynomial time. We start with some easy cases.

\begin{proposition}
\label{prop:boundedstdom}
\contracstd{} can be solved in polynomial time for a graph class $\mathcal{C}$, if either
\begin{itemize}
\item[(a)] $\mathcal{C}$ is closed under edge contractions and \textsc{Semitotal Dominating Set} can be solved in polynomial time on $\mathcal{C}$; or
\item[(b)] for every $G\in\mathcal{C}$, $\gamma_{t2}(G)\leq q$, where $q$ is some fixed constant; or 
\item[(c)] $\mathcal{C}$ is the class of $(H+K_1)$-free graphs, where $|V(H)|=q$ is a fixed constant and \contracstd is polynomial-time solvable on $H$-free graphs.
\end{itemize}
\end{proposition}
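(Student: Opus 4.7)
The three cases call for distinct strategies, which I sketch in turn.

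For $(a)$, the plan is to exploit closure under edge contractions directly. First compute $\gamma_{t2}(G)$ using the hypothesised polynomial-time algorithm for \textsc{Semitotal Dominating Set} on $\mathcal{C}$. Then loop over each edge $e\in E(G)$: since $\mathcal{C}$ is closed under contractions, $G/e$ still lies in $\mathcal{C}$, so $\gamma_{t2}(G/e)$ can be computed by the same algorithm. Return \yes{} iff some $e$ yields a strictly smaller value. The whole procedure makes $O(|E(G)|)$ polynomial-time calls and is therefore polynomial.

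For $(b)$, the bound $\gamma_{t2}(G)\leq q$ makes brute force viable. If $\gamma_{t2}(G)=2$ the answer is trivially \no, since the semitotal domination number can never drop below $2$. Otherwise invoke Theorem~\ref{thm:friendlytriple}(i): enumerate all vertex subsets of size at most $q$ (there are $O(n^q)$ such subsets), discard those that are not semitotal dominating sets, retain the ones of minimum size $\gamma_{t2}(G)$, and check whether any of them contains a friendly triple. Both semitotality and the friendly-triple condition are verifiable in polynomial time, so the whole procedure is polynomial.

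For $(c)$, the plan is to perform a case analysis on $G$. First, check in $O(n^q)$ time whether $G$ contains an induced copy of $H$. If it does not, then $G$ is $H$-free and we invoke the hypothesised polynomial algorithm for $H$-free graphs. If it does, say on vertex set $X$ with $|X|=q$, then $(H+K_1)$-freeness forces every $v\in V(G)\setminus X$ to have a neighbour in $X$ (otherwise $X\cup\{v\}$ would induce $H+K_1$), so $X$ is a dominating set of size $q$. Since $G$ is connected and has at least two vertices, one can add at most one additional vertex per element of $X$ (a neighbour serving as witness) to obtain a semitotal dominating set of size at most $2q$. Hence $\gamma_{t2}(G)\leq 2q$ is bounded by a constant depending only on $H$, and we finish by applying the case~$(b)$ procedure to this single graph $G$.

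The main obstacle is really the argument in $(c)$: one has to verify that the mere presence of an induced $H$ forces the semitotal domination number of the whole graph to be a constant independent of $|V(G)|$. The witness-construction relies on connectedness to ensure that every $v\in X$ either lies at distance at most two from another vertex of $X$, or has a neighbour outside $X$ that can be added as its witness; cases~$(a)$ and~$(b)$ are by comparison routine.
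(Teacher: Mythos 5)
Your proposal is correct and follows essentially the same route as the paper's proof in all three cases: contract each edge and recompute for (a), brute-force over $O(n^q)$ subsets combined with Theorem~\ref{thm:friendlytriple}(i) for (b), and for (c) test $H$-freeness, otherwise observe that $(H+K_1)$-freeness makes the copy of $H$ a dominating set of size $q$ and reduce to case (b). The only difference is cosmetic: you spell out the witness-adding argument giving $\gamma_{t2}(G)\leq 2q$, which the paper simply asserts.
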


\begin{proof}
In order to prove item (a), it suffices to note that if we can compute $\gamma_{t2}(G)$ and $\gamma_{t2}(G\setminus e)$, for any edge $e$ of $G$, in polynomial time, then we can determine whether a graph $G$ is a \yes-instance for \contracstd{} in polynomial time.

For item (b), we proceed as follows. Given a graph $G$ of $\mathcal{C}$, we consider every subset $S \subseteq V(G)$ with $\vert S \vert \leq q$ and check whether it is a semitotal dominating set of $G$. Since there are at most $O(n^q)$ possible such subsets, we can determine the semitotal domination number of $G$ and check whether the conditions given in Theorem~\ref{thm:friendlytriple}(i) are satisfied in polynomial time.

Finally, so as to prove item (c), we provide the following algorithm. Let $H$ and $q$ be as stated and let $G$ be an instance of \contracstd{} on $(H+K_1)$-free graphs. We first test whether $G$ is $H$-free (note that this can be done in time $O(n^q)$). If this is the case, we use the polynomial-time algorithm for \contracstd{} on $H$-free graphs. Otherwise, $G$ has an induced subgraph isomorphic to $H$; but since $G$ is a $(H+K_1)$-free graph, $V(H)$ must then be a dominating set of $G$ and so, $\gamma_{t2}(G)\leq 2q$. We then conclude by Proposition~\ref{prop:boundedstdom}(b) that \contracstd{} is also polynomial-time solvable in this case.
\end{proof}

%As for the case of (total) domination, we have the following result.
In what follows, we will use the following result by Galby et al.~\cite{domcontract}.

\begin{lemma}[\cite{domcontract}]
\label{lemma:p5free}
If $G$ is a $P_5$-free graph and $\gamma(G)\geq 3$, then $ct_{\gamma} (G) = 1$.
\end{lemma}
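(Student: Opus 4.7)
The plan is based on Theorem~\ref{theorem:contracdom}(i), which reduces the goal $ct_\gamma(G)=1$ to producing a minimum dominating set of $G$ that contains at least one edge. I would combine this with the classical Bacs\'o--Tuza structural theorem: every connected $P_5$-free graph admits either a dominating clique or a dominating induced $P_3$, applied to $G$.

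If $G$ has a dominating induced $P_3$, then $\gamma(G)\leq 3$; together with the hypothesis $\gamma(G)\geq 3$, this $P_3$ is a minimum dominating set containing two edges, and we are done. Otherwise $G$ has a dominating clique $K$, and the hypothesis $\gamma(G)\geq 3$ forces $|K|\geq 3$ (else $K$ would already witness $\gamma(G)\leq 2$). If $K$ itself is minimum we are done, since $K$ is a clique of size at least three and is therefore non-independent. So the heart of the argument is the case $|K|>\gamma(G)$: I would pick any minimum dominating set $D$, assume for contradiction that $D$ is independent (so that $|D\cap K|\leq 1$), and attempt a swap $D':=(D\setminus\{d\})\cup\{k\}$ for a well-chosen $d\in D$ and $k\in K$ with $k\sim d$. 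The objective is for $D'$ to both still dominate $G$ and contain an edge $kd''$ for some $d''\in D\setminus\{d\}$ adjacent to $k$; a double-counting argument on the edges between $D$ and $K$, combined with $|K|>|D|$, yields appropriate candidates for $d$, $k$, and $d''$ (and in the subcase $|D\cap K|=1$ the unique element of $D\cap K$ plays the role of $d''$, using the cliqueness of $K$).

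The main obstacle, and the step I expect to be most delicate, is verifying that the swap preserves the dominating property. Its failure produces a private neighbour $w$ of $d$ with $w\not\sim k$, and I plan to show that such a configuration forces an induced $P_5$ in $G$, contradicting the hypothesis. Concretely, picking $d''\in D\setminus\{d\}$ together with one of its $K$-neighbours $k''$ (existing because $K$ dominates), the vertices $w,d,k,k'',d''$ carry all the required path edges ($wd$ since $w$ is a private neighbour of $d$, $dk$ by the choice of $k$, $kk''$ from the cliqueness of $K$, and $k''d''$ by the choice of $k''$), while the non-edges $wk$, $wd''$, $dd''$ are forced by the definition of a private neighbour and the independence of~$D$. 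The only remaining potential shortcuts are $wk''$ and $dk''$, and ruling these out—by exploiting $|D|\geq 3$ to vary the choice of $d''$ and $k''$, or by iterating the swap argument at $d''$ when the first choice fails—is the technical crux of the proof.
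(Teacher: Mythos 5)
First, note that the paper does not prove this lemma at all: it is imported from \cite{domcontract}. The technique used there (and adapted inside this paper's proof of Lemma~\ref{lem:P5free}) is a distance argument: two vertices $u,w$ of a minimum dominating set at distance exactly $3$ are replaced by the two internal vertices $x,y$ of a shortest $u$--$w$ path, and $P_5$-freeness forces $N(u)\cup N(w)\subseteq N(x)\cup N(y)$, so the swap preserves domination and creates the edge $xy$. Your Bacs\'o--Tuza route is a genuinely different starting point, and the dominating-$P_3$ case as well as the case where the dominating clique $K$ is itself a minimum dominating set are fine.

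The dominating-clique case with $|K|>\gamma(G)$, however, has two concrete gaps. \emph{(1) Existence of the triple $(d,k,d'')$.} When $D\cap K=\emptyset$, counting edges between $D$ and $K$ does not produce a vertex $k\in K$ with two neighbours in $D$: it is perfectly consistent with $|K|>|D|$, with $D$ dominating $K$ and with $K$ dominating $D$ that every vertex of $K$ has exactly one neighbour in $D$ (several vertices of $K$ then sharing the same $D$-neighbour). In that situation no single swap $(D\setminus\{d\})\cup\{k\}$ can create an edge, since the only $D$-neighbour of $k$ is the vertex you just removed; you would need a two-vertex swap with a fresh domination analysis, and the counting argument you invoke cannot rule this case out. \emph{(2) The $P_5$ is degenerate.} The five vertices $w,d,k,k'',d''$ cannot induce a $P_5$ in the configuration you set up: $k\sim d''$ by construction (that adjacency is exactly what makes $D'$ contain an edge), so $kd''$ is a chord of the alleged path --- a chord you do not list among the shortcuts to be excluded. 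Even if one re-chooses $d''$ to be nonadjacent to $k$, such a vertex need not exist ($k$ may be complete to $D$), and the remaining chords $wk''$ and $dk''$, which you acknowledge but do not eliminate, are genuinely possible ($d$ may have many neighbours in the dominating clique $K$, and nothing prevents the undominated vertex $w$ from seeing $k''$). So the contradiction does not go through as described: the ``technical crux'' is not merely delicate but unresolved, whereas the distance-$3$ swap of \cite{domcontract} sidesteps it entirely.
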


\begin{lemma}
\label{lem:P5free}
\contracstd{} is polynomial-time solvable on $P_5$-free graphs.
\end{lemma}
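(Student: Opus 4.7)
The plan is to combine Theorem~\ref{thm:friendlytriple}(i) with the structural observation that every connected $P_5$-free graph has diameter at most $3$, since a shortest path on five vertices would be an induced $P_5$. I will split the algorithm according to the value of $\gamma_{t2}(G)$, each regime being easy to identify in polynomial time by brute force.

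If $\gamma_{t2}(G) = 2$, the instance is trivially negative, since edge contractions cannot push $\gamma_{t2}$ below $2$; this is detectable by enumerating all pairs of vertices. If $\gamma_{t2}(G) = 3$, I would enumerate all triples to find a minimum semitotal dominating set $D$. Whenever $G[D]$ contains an edge, $D$ itself contains a friendly triple and we answer ``yes'' by Theorem~\ref{thm:friendlytriple}(i); if $G[D]$ is independent, I would exhibit a different minimum semitotal dominating set containing an edge by replacing one of the three vertices with a common neighbour of a witnessing pair, verifying via the diameter bound and $P_5$-freeness that the exchange preserves domination.

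For $\gamma_{t2}(G) \geq 4$, I would prove that $ct_{\gamma_{t2}}(G) = 1$ always holds, so the algorithm simply outputs ``yes''. Note that $\gamma(G) = 1$ forces $\gamma_{t2}(G) = 2$, and $\gamma(G) = 2$ combined with the diameter bound forces $\gamma_{t2}(G) \leq 3$ (by adding to any two-vertex dominating set an intermediate vertex of a shortest path of length $3$ between them), so the regime $\gamma_{t2}(G) \geq 4$ entails $\gamma(G) \geq 3$. Lemma~\ref{lemma:p5free} then supplies a minimum dominating set $D_0$ of $G$ containing an edge $xy$, which I would upgrade into a minimum semitotal dominating set $D$ still containing $xy$, using the diameter bound to ensure the upgrade does not exceed $\gamma_{t2}(G)$ vertices. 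Finally, I would argue that $D$ must contain a friendly triple: otherwise $x$ and $y$ would be each other's unique witnesses in $D$, so every $w \in D \setminus \{x, y\}$ would satisfy $d_G(w, x) = d_G(w, y) = 3$ by the diameter bound. For any shortest $x$--$w$ path $x$--$a$--$b$--$w$, the set $\{y, x, a, b, w\}$ would host an induced $P_5$ unless $ya \in E(G)$; examining where $w$'s own witness in $D$ lies would then yield a contradiction.

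The main obstacle I anticipate is the upgrade step in the regime $\gamma(G) = \gamma_{t2}(G) \geq 4$, where adding an extra witness to $D_0$ would exceed the minimum size. Here a careful exchange argument exploiting $P_5$-freeness and the diameter bound is needed to produce a minimum semitotal dominating set that is simultaneously semitotal and contains an edge of $G$; once this set is in hand, the friendly-triple extraction sketched above concludes the proof.
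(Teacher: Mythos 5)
Your high-level plan coincides with the paper's: show that every connected $P_5$-free graph with $\gamma_{t2}\geq 3$ is a \yes-instance by starting from Lemma~\ref{lemma:p5free} and the fact that vertices of a (semitotal) dominating set in a $P_5$-free graph are pairwise at distance at most~$3$. However, the step you yourself flag as ``the main obstacle'' --- turning a non-independent minimum dominating set into a \emph{minimum semitotal} dominating set that still contains an edge --- is precisely the heart of the proof, and you have not supplied it. The paper does this with an extremal argument: among all non-independent minimum dominating sets, take one $D$ with the fewest unwitnessed vertices, and for an unwitnessed $w$ with closest partner $u$ at distance~$3$ along a path $u\,x\,y\,w$, prove via $P_5$-freeness that $N(u)\cup N(w)\subseteq N(x)\cup N(y)$, so that swapping $\{u,w\}$ for $\{x,y\}$ preserves domination and strictly decreases the number of unwitnessed vertices. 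Note that this also quietly requires $\gamma(G)=\gamma_{t2}(G)$ (the paper imports this from \cite{semitot} via $(C_6,P_6,P)$-freeness); you assume this equality when you write ``the regime $\gamma(G)=\gamma_{t2}(G)\geq 4$'' but never establish it, and without it a set of size $\gamma(G)$ that happens to be semitotal is not automatically a \emph{minimum} semitotal dominating set, which is what Theorem~\ref{thm:friendlytriple}(i) demands.

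Two further steps do not go through as stated. First, in the case $\gamma_{t2}(G)=3$ with $G[D]$ independent, ``replacing one of the three vertices with a common neighbour of a witnessing pair'' can destroy domination (the replaced vertex may have private neighbours seen by nobody else), and no argument is given that $P_5$-freeness rescues this. Second, your final claim that the constructed $D$ \emph{must} contain a friendly triple is likely too strong: after deducing $ya\in E(G)$ for the path $x\,a\,b\,w$, the promised contradiction from ``where $w$'s witness lies'' does not obviously materialise (e.g.\ the witness $w'$ may attach to $b$ without creating an induced $P_5$). The paper does not derive a contradiction here; instead it performs one more exchange, replacing $\{u,w\}$ by the two internal path vertices to obtain a \emph{different} minimum semitotal dominating set containing a friendly triple, and must additionally check that this exchange preserves semitotality. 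As written, your proposal identifies the right skeleton but leaves the two decisive exchange arguments unproved and asserts one conclusion that appears to be false for the set you construct.
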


\begin{proof}
Let $G$ be a $P_5$-free graph. If $\gamma _{t2} (G) = 2$ then $G$ is clearly a \no-instance for \contracstd{}. Assume henceforth that $\gamma_{t2} (G) \geq 3$. Since $G$ is $P_5$-free, $G$ is in particular $(C_6,P_6,P)$-free (see Figure~\ref{fig:P}); it then follows from \cite{semitot} that $\gamma (G) = \gamma_{t2} (G)$. Now by Lemma \ref{lemma:p5free}, $ct_{\gamma} (G) = 1$ which implies that there exists a minimum dominating set of $G$ which is not independent (see Theorem~\ref{theorem:contracdom}(i)). Amongst those non-independent minimum dominating sets, consider one $D$ with the fewest unwitnessed vertices. Let us show that $D$ is a semitotal dominating set. 

\begin{figure}[t]
    \centering
\begin{tikzpicture}
\node[circ] (1) at (0,0) {};
\node[circ] (2) at (1,0) {};
\node[circ] (3) at (0,1) {};
\node[circ] (4) at (1,1) {};
\node[circ] at (2,0) {};
\node[circ] (6) at (3,0) {};

\draw[-] (1) -- (6)
(1) -- (3)
(2) -- (4)
(3) -- (4);
\end{tikzpicture}
    \caption{The graph $P$.}
    \label{fig:P}
\end{figure}

Suppose to the contrary that there exists $w \in D$ such that $w$ has no witness and let $u \in D$ be a vertex such that $d_G(w,D\backslash \{w\}) = d_G(w,u)$. Since $G$ is $P_5$-free, it follows that $d_G(u,w) \leq 3$, and as $d_G(u,w) > 2$ by assumption, in fact $d_G(u,w) = 3$. Let $x$ (respectively $y$) be the neighbour of $u$ (respectively $w$) on a shortest path from $u$ to $w$. We claim that $N_G(u) \cup N_G(w) \subseteq N_G(x) \cup N_G(y)$; indeed, if $a$ is a neighbour of $u$ then $a$ is nonadjacent to $w$ (otherwise $d_G(u,w) \leq 2$) and thus, $a$ is adjacent to either $x$ or $y$ for otherwise $auxyw$ would induce a $P_5$. We conclude similarly if $a$ is a neighbour of $w$. But then, $(D \backslash \{u,w\}) \cup \{x,y\})$ is a dominating set which is not independent and contains fewer unwitnessed vertices than $D$, a contradiction to its minimality. Thus, $D$ is a minimum semitotal dominating set. 

Now consider $u,v \in D$ such that $uv \in E(G)$. If there exists $w \in D$ such that $d_G(w, \{u,v\}) \leq 2$, then $u,v,w$ is a friendly triple contained in $D$ and we conclude by Theorem~\ref{thm:friendlytriple}(i). Assume henceforth that no such vertex exists and consider a vertex $w \in D$ the closest to $\{u,v\}$. Since $G$ is $P_5$-free, it follows that $d_G(w, \{u,v\}) \leq 3$, and as $d_G(w,\{u,v\}) > 2$ by assumption, in fact $d_G(w, \{u,v\}) = 3$. Assume, without loss of generality, that $d_G(w,v) \geq d_G(w,u) =3$ and denote by $x$ (respectively $y$) the neighbour of $u$ (respectively $w$) on a shortest path from $u$ to $w$. Then, as previously, we have that $N_G(w) \cup N_G(u) \subseteq N_G(x) \cup N_G(y)$ and thus, $D'=(D\backslash \{u,w\}) \cup \{x,y\}$ is a minimum semitotal dominating set containing a friendly triple, namely $x,y,v$ (note that by assumption, no vertex in $D$ had $u$ or $v$ as a witness and so $D'$ is indeed a semitotal dominating set). Hence by Theorem~\ref{thm:friendlytriple}(i), $ct_{\gamma_{t2}}(G) = 1$ which concludes the proof.
\end{proof}

By combining Lemma~\ref{lem:P5free} and Proposition~\ref{prop:boundedstdom}(c), we obtain the following.

\begin{theorem}
\label{thm:P4}
For any $t \geq 0$, \contracstd{} is polynomial-time solvable on $(P_5 + tK_1)$-free graphs.
\end{theorem}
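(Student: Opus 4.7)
The plan is to prove this by induction on $t\geq 0$, invoking Lemma~\ref{lem:P5free} as the base case and Proposition~\ref{prop:boundedstdom}(c) to carry out the inductive step. Note that $P_5 + tK_1$ has exactly $5+t$ vertices, which is a fixed constant whenever $t$ is fixed, so Proposition~\ref{prop:boundedstdom}(c) is applicable at every stage.

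For the base case $t=0$, the class of $(P_5 + 0\cdot K_1)$-free graphs coincides with the class of $P_5$-free graphs, and Lemma~\ref{lem:P5free} already establishes that \contracstd{} is polynomial-time solvable on $P_5$-free graphs. For the inductive step, suppose that for some $t\geq 0$ we have already shown that \contracstd{} is polynomial-time solvable on $(P_5 + tK_1)$-free graphs. Set $H = P_5 + tK_1$, so $|V(H)| = 5+t$ is a fixed constant. Then $(P_5 + (t+1)K_1)$-free graphs are precisely $(H+K_1)$-free graphs, and applying Proposition~\ref{prop:boundedstdom}(c) with this choice of $H$ and $q = 5+t$ immediately yields that \contracstd{} is polynomial-time solvable on $(P_5 + (t+1)K_1)$-free graphs, completing the induction.

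There is essentially no obstacle here: the whole content of the argument is already packaged in Lemma~\ref{lem:P5free} and Proposition~\ref{prop:boundedstdom}(c). The only thing to check is that the hypothesis of Proposition~\ref{prop:boundedstdom}(c) is met at each step, and this is automatic since $|V(P_5+tK_1)| = 5+t$ depends only on $t$ and not on the input graph.
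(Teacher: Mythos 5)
Your proposal is correct and is essentially the paper's argument: the paper derives Theorem~\ref{thm:P4} directly "by combining Lemma~\ref{lem:P5free} and Proposition~\ref{prop:boundedstdom}(c)", and your induction on $t$ is exactly the natural way to spell out that combination. No gaps; the hypothesis check that $|V(P_5+tK_1)|=5+t$ is a fixed constant is the only point that needs verifying, as you note.
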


Let us now present the last result of this section regarding $P_3+kP_2$-free graphs.

\begin{theorem}
\label{thm:P3kP2}
For any $k \geq 0$, \contracstd{} is polynomial-time solvable on $P_3+kP_2$-free graphs.
\end{theorem}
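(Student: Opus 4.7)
The plan is to apply Proposition~\ref{prop:boundedstdom}(a) with $\mathcal{C}$ the class of $P_3+kP_2$-free graphs. This reduces to establishing two facts: (i) $\mathcal{C}$ is closed under edge contractions, and (ii) \textsc{Semitotal Dominating Set} is polynomial-time solvable on $\mathcal{C}$.

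For (i), suppose $G \in \mathcal{C}$, pick $uv \in E(G)$, and let $G' = G/uv$ with new vertex $w$. If, for contradiction, $G'$ contains an induced copy $H$ of $P_3+kP_2$, then $w \in V(H)$, for otherwise $H$ would already be induced in $G$. Using $N_{G'}(w) = (N_G(u) \cup N_G(v)) \setminus \{u,v\}$, every $x \in V(H) \setminus \{w\}$ is adjacent in $G$ to at least one of $u,v$ precisely when $xw \in E(H)$, and is non-adjacent in $G$ to both $u$ and $v$ otherwise. Since $w$ has at most two neighbors in $H$ (being a vertex of either the $P_3$ or one of the $P_2$'s), I will case-analyze its role in $H$: whenever $w$ has at most one neighbor in $H$, or both neighbors of $w$ in $H$ lie in $N_G(u)$ (respectively $N_G(v)$), replacing $w$ by $u$ (respectively $v$) in $V(H)$ recovers an induced $P_3+kP_2$ in $G$, a contradiction. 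The only remaining case has $w$ as the center of the $P_3$ in $H$ with neighbors $p_1 \in N_G(u) \setminus N_G(v)$ and $p_2 \in N_G(v) \setminus N_G(u)$; I will then observe that $\{u,v,p_2\}$ induces a $P_3$ in $G$, which combined with the $kP_2$ vertices of $H$ (all non-neighbors of $w$ in $G'$, hence of both $u$ and $v$ in $G$) yields an induced $P_3+kP_2$ in $G$, a final contradiction.

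For (ii), since $\mathcal{C}$ is a monogenic graph class, the result of~\cite{semitot} reduces \textsc{Semitotal Dominating Set} on $\mathcal{C}$ to \textsc{Dominating Set} on $\mathcal{C}$, which is polynomial-time solvable via the algorithm obtained in~\cite{P3kP2}. Proposition~\ref{prop:boundedstdom}(a) then finishes the proof.

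The main obstacle is the split-neighborhood subcase of (i): when neither $u$ nor $v$ alone preserves all adjacencies of $w$ after replacement, one cannot simply substitute a single endpoint of the contracted edge and must instead promote the edge $uv$ itself to take part in the induced $P_3$ in $G$.
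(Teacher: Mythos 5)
Your step (ii) is false, and this breaks the whole approach. \textsc{Semitotal Dominating Set} is \emph{not} polynomial-time solvable on $(P_3+kP_2)$-free graphs for $k\geq 1$ (unless $\mathsf{P}=\mathsf{NP}$): \textsc{Dominating Set} is $\mathsf{NP}$-hard on split graphs, split graphs are $2K_2$-free, and $2K_2$ is an induced subgraph of $P_3+kP_2$ for every $k\geq 1$, so \textsc{Dominating Set} is $\mathsf{NP}$-hard on $(P_3+kP_2)$-free graphs; since the class of $2K_2$-free graphs is monogenic, the very result of \cite{semitot} that you invoke transfers this $\mathsf{NP}$-hardness to \textsc{Semitotal Dominating Set}. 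Note also that \cite{semitot} asserts that the $\mathsf{P}$ versus $\mathsf{NP}$-hard \emph{classifications} of the two problems coincide on monogenic classes --- it is not an instance-preserving reduction from one problem to the other --- and that \cite{P3kP2} provides a polynomial-time algorithm for the blocker problem \contracd{}, not for \textsc{Dominating Set} itself. The whole interest of this graph class is precisely that the blocker problem is strictly easier than the underlying optimisation problem: one can decide whether a single contraction decreases $\gamma_{t2}$ without ever computing $\gamma_{t2}$. Hence Proposition~\ref{prop:boundedstdom}(a) is simply not applicable here, regardless of whether the class is closed under contractions.

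Your part (i) --- closure of $(P_3+kP_2)$-free graphs under edge contraction, with the split-neighbourhood case handled by promoting the contracted edge $uv$ into the $P_3$ --- is correct, but it becomes moot once (ii) fails. The paper's actual proof is necessarily structural: it fixes an induced $P_3+(k-1)P_2$ with vertex set $A$ (of constant size $2k+1$), partitions $V(G)$ into $A$, $B$, $C$ by distance to $A$, and introduces a set $\mathcal{R}$ of ``regular'' vertices. If $\mathcal{R}\neq\varnothing$, it shows $\gamma(G)=\gamma_{t2}(G)$ and that the instance is equivalent to one of \contracd{}, which is polynomial-time solvable by \cite{P3kP2}; if $\mathcal{R}=\varnothing$, it shows that every \no-instance has $\gamma_{t2}(G)$ bounded by a function of $k$ alone, so a brute-force search for a bounded-size semitotal dominating set containing a friendly triple (via Theorem~\ref{thm:friendlytriple}(i)) decides the problem. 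An argument of this kind, which avoids computing $\gamma_{t2}$ on general instances, is unavoidable.
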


\begin{proof}
First observe that if $G$ does not contain an induced $P_3$ then $G$ is a disjoint union of cliques and thus a \no-instance for \contracstd. Assume henceforth that $k \geq 1$ and let $G$ be a $P_3+kP_2$-free graph containing an induced $P_3+(k-1)P_2$. The following proof is similar to that of \cite[Theorem 2]{P3kP2}. Let $A \subseteq V(G)$ be a set of vertices which induces a $P_3+(k-1)P_2$, let $B \subset V(G)$ be the set of vertices at distance one from $A$ and let $C \subset V(G)$ be the set of vertices at distance two from $A$. Note that since $G$ is $P_3+kP_2$-free, the sets $A,B$ and $C$ partition $V(G)$ and $C$ is an independent set. We call a vertex $v_1 \in C$ a \emph{regular vertex} if there exist $k$ vertices $v_2,\ldots,v_{k+1} \in C$ such that $v_1,\ldots,v_{k+1}$ are pairwise at distance at least four and $N(v_i)$ is a clique for every $i \in [k+1]$. We denote by $\mathcal{R}$ the set of regular vertices. 

\begin{claim}
\label{clm:Rnonempty}
If $\mathcal{R} \neq \varnothing$ then the following holds.
\begin{itemize}
\item[(i)] $\gamma(G) = \gamma_{t2}(G)$.
\item[(ii)] $G$ is a \yes-instance for \contracd{} if and only if $G$ is a \yes-instance for \contracstd.
\end{itemize}
\end{claim}

\begin{claimproof}
Let $V_1\subseteq V(G)\setminus N[\mathcal{R}]$ be the set of vertices at distance one from $N[\mathcal{R}]$ and let $V_2=V(G)\setminus (N[\mathcal{R}]\cup V_1)$. Note that since $G[N[\mathcal{R}]]$ contains an induced $kP_2$, $V_2$ is $P_3$-free and thus $G[V_2]$ is a disjoint union of cliques. 

Let $D$ be a minimum dominating set of $G$. We show how to transform $D$ into a semitotal dominating set of $G$ of the same size. It is shown in \cite[Claim 7]{P3kP2} that $\vert D\cap N[c]\vert =1$ for every regular vertex $c$. Thus, if $D$ contains a regular vertex $c$ then $D \cap N(c) = \varnothing$ and the set $(D \setminus \{c\}) \cup \{b\}$ where $b \in N(c)$, is also a minimum dominating set of $G$. Furthermore, if $D$ contains an edge $e$ then $c$ is not an endpoint of $e$. Hence, we may replace every regular vertex in $D$ by one of its neighbour (without destroying any edge contained in $D$). Now suppose that $D$ contains a vertex $v \in V_2$ which is anticomplete to $V_1$ and denote by $C_v$ the clique of $V_2$ containing $v$. Since $G$ is connected, $C_v \setminus \{v\} \neq \varnothing$ and $N_v = \{u \in C_v \setminus \{v\}~|~N(u) \cap V_1 \neq \varnothing\} \neq \varnothing$. If $N_v \cap D = \varnothing$ then the set $(D \setminus \{v\}) \cup \{u\}$ where $u \in N_v$, is also a minimum dominating set of $G$; furthermore, if $G$ contains an edge $e$ then $v$ is not an endpoint of $e$. If $N_v \cap D \neq \varnothing$, then $D$ would clearly not be minimum. Hence, we may replace in $D$ every vertex of $D \cap V_2$ which is anticomplete to $V_1$ either by a vertex in $V_2$ adjacent to $V_1$ or by a vertex in $V_1$ (while preserving the property of containing an edge). We claim that then $D$ is a semitotal dominating set. Indeed, it is shown in \cite{P3kP2}[Claim 6] that if a vertex $b \in V(G) \setminus N[\mathcal{R}]$ is adjacent to $N(c)$ for some regular vertex $c \in \mathcal{R}$ then there exists $c' \in \mathcal{R} \setminus \{c\}$ such that $b$ is complete to $N(c')$. By applying this claim twice, it follows that for every vertex $v\in V_1$ there exist two regular vertices $c,c'\in\mathcal{R}$ such that $v$ is complete to $N(c) \cup N(c')$. Since $|D \cap N(c)| = 1$ for every $c \in \mathcal{R}$, this implies that every vertex in $D \cap V_1$ has a witness in $D \cap N(\mathcal{R})$. Furthermore since $G$ is connected, the above also implies that for any $c \in \mathcal{R}$, every vertex in $N(c)$ is within distance at most two from every vertex in $N(c')$ for some $c' \in \mathcal{R} \setminus \{c\}$; thus every vertex in $D \cap N(\mathcal{R})$ is witnessed by some vertex in $D \cap N(\mathcal{R})$. Now since any vertex $v \in D \cap V_2$ is adjacent to some vertex in $V_1$, $v$ is within distance two of a vertex in $N(\mathcal{R})$ (recall that every vertex in $V_1$ is complete to $N(c)$ for some regular vertex $c$) and thus, within distance two of a vertex in $D \cap N(\mathcal{R})$. It follows that $D$ is a semitotal dominating set of $G$ and since $\gamma(H) \leq \gamma_{t2}(H)$ for any graph $H$, we conclude that $\gamma(G) = \gamma_{t2}(G)$. 

Now suppose that $D$ initially contained an edge, that is, $G$ is a \yes-instance for \textsc{$1$-Edge Contrac\-tion($\gamma$)}. Then as shown above, the transformed $D$ also contains an edge $e=uv$. Suppose first that $u \in N(c)$ and $v \in N(c')$ for some $c,c' \in \mathcal{R}$ (note that $c \neq c'$ as $|D \cap N(v)| = 1$ for every regular vertex $v$). Since $c$ is a regular vertex, there exist $c_1,\ldots,c_k \in \mathcal{R} \setminus \{c'\}$ such that $c,c_1,\ldots,c_k$ are pairwise at distance at least four. For every $i \in [k]$, denote by $v_i$ the vertex in $D \cap N(c_i)$. Then there exists $j \in [k]$ such that $v$ is adjacent to $v_j$ for otherwise $v,u,c,c_1,v_1,\ldots,c_k,v_k$ induce a $P_3+kP_2$; thus $u,v,v_j$ is a friendly triple. Now if one of $u$ and $v$ belongs to $V_1$, say $u \in V_1$ without loss of generality, then by the above there exist $x,y \in D \cap N(\mathcal{R})$ such that $u$ is adjacent to both $x$ and $y$. Assuming without loss of generality that $v \neq y$, we then have that $u,v,y$ is a friendly triple. Finally, if $u,v \in V_2$ then $u$ is adjacent to some vertex $w \in V_1$ which by the above is adjacent to a vertex $c \in D \cap N(\mathcal{R})$ and so $u,v,c$ is a friendly triple. Since in every case we can find a friendly triple, we conclude by Theorem~\ref{thm:friendlytriple}(i) that $G$ is a \yes-instance for \contracstd. Conversely, if there exists a minimum semitotal dominating set $D$ of $G$ containing a friendly triple then $D$ is a fortiori a minimum dominating set of $G$ containing an edge; thus $G$ is a \yes-instance for \contracd{} which concludes the proof.
\end{claimproof}

\begin{claim}[\app]
\label{clm:Rempty}
If $\mathcal{R} = \varnothing$ and $G$ is a \no-instance for \contracstd{} then $\gamma_{t2}(G) \leq (k+1)(\vert A\vert+2)+k(1+2(k+1))+5\vert A\vert-4$.
\end{claim} 

\noindent
Consider now the following algorithm whose correctness is guaranteed by Claims~\ref{clm:Rnonempty} and \ref{clm:Rempty}.

\begin{itemize}
\item[1.] Compute $A$, $B$, $C$ and $\mathcal{R}$.
\item[2.] If $\mathcal{R} \neq \varnothing$ then check whether $G$ is a \yes-instance for \contracd{}. 
\begin{itemize}
\item[2.1] If the answer is yes then output \yes. 
\item[2.2] Otherwise output \no.
\end{itemize}
\item[3.] If $\mathcal{R} = \varnothing$ then check whether there exists a semitotal dominating set of size at most $k(|A|+2(k+1)+2)+3|A|+k-4$.
\begin{itemize}
\item[3.1] If the answer is no then output \yes.
\item[3.2] Otherwise, determine whether there exists a minimum dominating set containing friendly triple or not using brute force (see Theorem~\ref{thm:friendlytriple}(i)).
\end{itemize}
\end{itemize}

Regarding its complexity, it is shown in \cite{P3kP2} that checking whether $G$ is a \yes-instance for \contracd{} can be done in polynomial time; thus, step 2 can be done in polynomial-time. Now since step 1 clearly takes polynomial time and checking whether there exists a minimum semitotal dominating set of size at most $k(|A|+2(k+1)+2)+3|A|+k-4$ containing a friendly triple can also be done in polynomial time (by simple brute force), we conclude that the above algorithm runs in polynomial time.	
\end{proof}

%===============================================================================

\subsection{Proof of Theorem~\ref{thm:dichotomy3}}
\label{sec:main}

We finally prove Theorem~\ref{thm:dichotomy3}. Let $H$ be a graph. If $H$ contains a cycle then \contracstd{} is $\mathsf{NP}$-hard on $H$-free graphs by Theorem~\ref{theorem:largegirth}. Thus, we may assume that $H$ is a forest. If $H$ contains a vertex of degree at least three, then $H$ contains an induced claw and so \contracstd{} is $\mathsf{coNP}$-hard on $H$-free by Theorem~\ref{thm:claw}. Assume henceforth that $H$ is a linear forest. If $H$ contains a path on at least six vertices then \contracstd{} is $\mathsf{NP}$-hard on $H$-free graphs by Theorem~\ref{thm:P6P4+P2}. Thus we may assume that every connected component of $H$ induces a path on at most five vertices. Now suppose that $H$ contains a path on at least four vertices. If $H$ has another connected component on more than one vertex, then \contracstd{} is $\mathsf{NP}$-hard on $H$-free graphs by Theorem~\ref{thm:P6P4+P2}. Otherwise, every other connected component of $H$ (if any) contains exactly one vertex in which case \contracstd{} is polynomial-time solvable by Theorem~\ref{thm:P4}. Now suppose that the longest path in $H$ has length three. If $H$ has another connected component on three vertices then \contracstd{} is $\mathsf{coNP}$-hard by Theorem~\ref{thm:2P3}. Otherwise, every other connected component of $H$ (if any) has size at most two in which case \contracstd{} is polynomial-time solvable on $H$-free graphs by Theorem~\ref{thm:P3kP2}. Finally if every connected component of $H$ has size at most two then \contracstd{} is polynomial-time solvable on $H$-free graphs by Theorem~\ref{thm:P3kP2}, which concludes the proof.

%===============================================================================

%%
%% Bibliography
%%

%% Please use bibtex, 

\bibliography{arxiv}

\newpage

\appendix

\section{Proof of Theorem~\ref{thm:claw}}

In the following hardness proof, we reduce from the \textsc{Positive Exactly 3-Bounded 1-In-3 3-Sat} problem which is a variant of the \textsc{3-Sat} problem where given a formula $\Phi$ in which all literals are positive, every clause contains exactly three literals and every variable appears in exactly three clauses, the problem is to determine whether there exists a truth assignment such that each clause has exactly one true literal. This problem was shown to be $\mathsf{NP}$-complete in \cite{1IN3}.\\

We first introduce the following graph, called the \emph{long paw}, which we will use in the reduction.
\begin{figure}[htb]
\centering
\begin{tikzpicture}
\node[circ,label=below:{\small $P(1)$ }] (a) at (0,0) {};
\node[circ,label=below:{\small $P(2)$}] (b) at (1,0) {};
\node[circ,label=right:{\small $P(3)$}] (c) at (.5,.86) {};
\node[circ,label=right:{\small $P(4)$}] at (.5,1.36) {};
\node[circ,label=right:{\small $P(5)$}] (d) at (.5,1.86) {};

\draw (a) -- (b) 
(a) -- (c) 
(b) -- (c) 
(c) -- (d);
\end{tikzpicture}
\caption{The long paw $P$.}
\label{fig:LongPaw}
\end{figure}

As mentioned above, we reduce from \textsc{Positive Exactly 3-Bounded 1-In-3 3-Sat}: given an instance $\Phi$ of this problem, with variable set $X$ and clause set $C$, we construct an instance $G$ of \contracstd{} such that $\Phi$ is a \yes-instance for \textsc{Positive 1-In-3 3-Sat} if and only if $G$ is a \no-instance for \contracstd{}, as follows. For every variable $x \in X$ contained in clauses $c,c'$ and $c''$, we introduce the gadget $G_x$ depicted in Figure~\ref{fig:vargadclaw} (where the rectangles indicate that the corresponding set of vertices is a clique). For every clause $c \in C$ containing variables $x,y$ and $z$, we introduce the gadget $G_c$ depicted in Figure~\ref{fig:clausegadclaw} consisting of the disjoint union of the graph $G_c^T$ and the graph $G_c^F$. Finally, for every clause $c \in C$ containing variables $x,y$ and $z$, we add edges between the corresponding gadgets as follows. 
\begin{itemize}
\item[$\cdot$] For every $p \in \{x,y,z\}$,  we connect $P_{p,1}^c(2)$ to $f_c^{ab}$ if and only if $p \in \{a,b\}$.
\item[$\cdot$] For every $p \in \{x,y,z\}$, we connect $P_{p,2}^c(1)$ to $t_c^p$ and further connect $P_{p,2}^c(1)$ to $w_c^{ab}$ if and only if $p \in \{a,b\}$.   
\end{itemize}
We denote by $G$ the resulting graph.

\begin{figure}[t]
\centering
\begin{tikzpicture}
\node[circ,label=below:{\small $a_x^c$}] (ac) at (2,2) {};
\longPaw{3.5}{2}{$P_{x,1}^c$}{$P_{x,1}^c(1)$}{$P_{x,1}^c(2)$};
\draw (ac) -- (3.5,2);

\node[circ,label=below:{\small $a_x^{c'}$}] (ac') at (2,0) {};
\longPaw{3.5}{0}{$P_{x,1}^{c'}$}{$P_{x,1}^{c'}(1)$}{$P_{x,1}^{c'}(2)$};
\draw (ac') -- (3.5,0);

\node[circ,label=below:{\small $a_x^{c''}$}] (ac'') at (2,-2) {};
\longPaw{3.5}{-2}{$P_{x,1}^{c''}$}{$P_{x,1}^{c''}(1)$}{$P_{x,1}^{c''}(2)$};
\draw (ac'') -- (3.5,-2);

\node[circ,label=below:{\small $b_x^c$}] (dc) at (-2,2) {};
\longPaw{-4.5}{2}{$P_{x,2}^c$}{$P_{x,2}^c(1)$}{$P_{x,2}^c(2)$};
\draw (dc) -- (-3.5,2);

\node[circ,label=below:{\small $b_x^{c'}$}] (dc') at (-2,0) {};
\longPaw{-4.5}{0}{$P_{x,2}^{c'}$}{$P_{x,2}^{c'}(1)$}{$P_{x,2}^{c'}(2)$};
\draw (dc') -- (-3.5,0);

\node[circ,label=below:{\small $b_x^{c''}$}] (dc'') at (-2,-2) {};
\longPaw{-4.5}{-2}{$P_{x,2}^{c''}$}{$P_{x,2}^{c''}(1)$}{$P_{x,2}^{c''}(2)$};
\draw (dc'') -- (-3.5,-2);

\node[circ,label=below:{\small $T_x$ }] (a) at (-.5,0) {};
\node[circ,label=below:{\small $F_x$}] (b) at (.5,0) {};
\node[circ,label=right:{\small $u_x$}] (c) at (0,.86) {};
\node[circ,label=right:{\small $v_x$}] at (0,1.36) {};
\node[circ,label=right:{\small $w_x$}] (d) at (0,1.86) {};

\draw (a) -- (b) 
(a) -- (c) 
(b) -- (c) 
(c) -- (d);

\draw (a) -- (dc)
(a) -- (dc')
(a) -- (dc'')
(b) -- (ac)
(b) -- (ac')
(b) -- (ac'');

\draw (-2.3,-2.7) rectangle (-1.7,2.3);
\draw (1.7,-2.7) rectangle (2.3,2.3);
\end{tikzpicture}
\caption{The gadget $G_x$ for a variable $x \in X$ contained in clauses $c,c'$ and $c''$ (rectangles indicate that the corresponding set of vertices induces a clique).}
\label{fig:vargadclaw}
\end{figure}
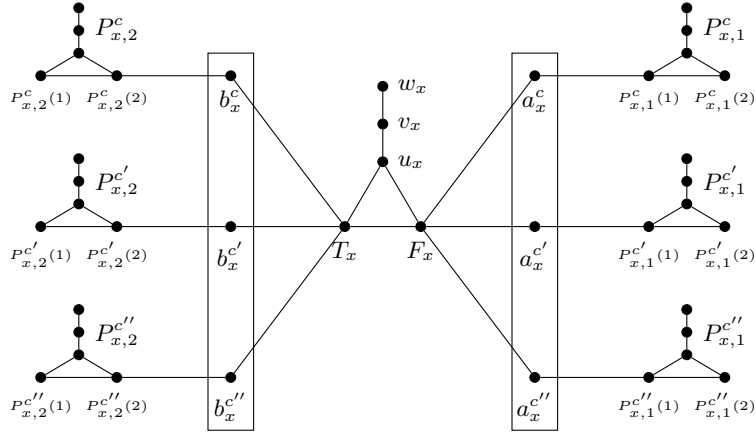

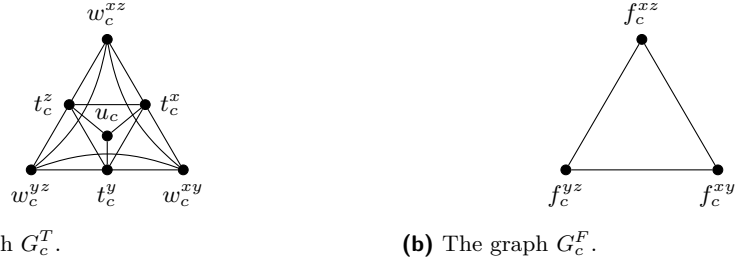
\begin{figure}[t]
\centering
\begin{subfigure}{.45\textwidth}
\centering
\begin{tikzpicture}
\node[circ,label=below:{\small $w_c^{yz}$}] (wyz) at (0,0) {};
\node[circ,label=below:{\small $w_c^{xy}$}] (wxy) at (2,0) {};
\node[circ,label=above:{\small $w_c^{xz}$}] (wxz) at (1,1.73) {};

\draw (wyz) -- (wxz) node[circ,midway,label=left:{\small $t_c^z$}]  (tz) {};
\draw (wyz) -- (wxy) node[circ,midway,label=below:{\small $t_c^y$}] (ty) {};
\draw (wxy) -- (wxz) node[circ,midway,label=right:{\small $t_c^x$}] (tx) {};

\draw (tz) -- (ty) -- (tx) -- (tz);

\draw (wyz) edge[bend right=20] (wxz);
\draw (wyz) edge[bend left=20] (wxy);
\draw (wxy) edge[bend left=20] (wxz);

\node[circ,label=above:{\small $u_c$}] (uc) at (1,.45) {};
\draw (uc) -- (tz)
(uc) -- (ty)
(uc) -- (tx); 
\end{tikzpicture}
\caption{The graph $G_c^T$.}
\end{subfigure}
\hspace*{.5cm}
\begin{subfigure}{.45\textwidth}
\centering
\begin{tikzpicture}
\node[circ,label=below:{\small $f_c^{yz}$}] (wyz) at (0,0) {};
\node[circ,label=below:{\small $f_c^{xy}$}] (wxy) at (2,0) {};
\node[circ,label=above:{\small $f_c^{xz}$}] (wxz) at (1,1.73) {};

\draw (wyz) -- (wxy) -- (wxz) -- (wyz);
\end{tikzpicture}
\caption{The graph $G_c^F$.}
\end{subfigure}
\caption{The gadget $G_c$ for a clause $c \in C$ containing variables $x,y$ and $z$.}
\label{fig:clausegadclaw}
\end{figure}

\begin{observation}
\label{obs:longpaws}
Let $D$ be a semitotal dominating set of $G$. Then for every variable $x \in X$, $|D \cap V(G_x)| \geq 14$. 
\end{observation}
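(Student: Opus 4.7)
The plan is to exhibit seven pairwise vertex-disjoint copies of the long paw inside $V(G_x)$ and to show that any semitotal dominating set of $G$ uses at least two vertices from each such copy. The seven copies I will use are the six outer long paws $P_{x,1}^c, P_{x,1}^{c'}, P_{x,1}^{c''}, P_{x,2}^c, P_{x,2}^{c'}, P_{x,2}^{c''}$ of Figure~\ref{fig:vargadclaw}, together with the ``central'' long paw on vertex set $\{T_x, F_x, u_x, v_x, w_x\}$, in which $T_x$ and $F_x$ play the roles of $P(1)$ and $P(2)$, $u_x$ is the triangle apex $P(3)$, and $u_x - v_x - w_x$ is the pendant path. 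Once the per-paw lower bound is established, summing over these seven disjoint paws immediately yields $|D \cap V(G_x)| \geq 2 \cdot 7 = 14$.

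The per-paw bound will rest on one structural observation, which is where I would begin: in every one of the seven long paws $P$ above, the only vertices of $V(P)$ with neighbours outside $V(P)$ are $P(1)$ and $P(2)$. This is immediate from inspecting Figure~\ref{fig:vargadclaw} together with the inter-gadget edges listed in the construction of $G$ (the central paw only attaches via $T_x, F_x$ to the two cliques, and each outer paw only attaches via its $P(1)$ to the clique and via $P(2)$ to the clause gadget, plus possibly $P_{x,2}^c(1)$ to $t_c^x, w_c^{ab}$). In particular, $P(5)$ has $P(4)$ as its unique neighbour in the whole of $G$, and the only vertices of $G$ at distance at most two from $P(5)$ lie in $\{P(3), P(4)\}$.

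Given this observation, the per-paw lower bound follows by a short case analysis on whether $P(5) \in D$. If $P(5) \notin D$, then $P(4)$ must belong to $D$ in order to dominate $P(5)$; the only vertices within distance two of $P(4)$ in $G$ are $\{P(1), P(2), P(3), P(5)\}$, so the semitotal witness condition on $P(4)$ forces at least one of $P(1), P(2), P(3)$ to lie in $D$ as well. If instead $P(5) \in D$, then its witness must lie in $D \cap \{P(3), P(4)\}$ by the structural observation above. In either case $|D \cap V(P)| \geq 2$. I expect no real obstacle here; the only point requiring care is verifying that the middle vertices $P(3), P(4), P(5)$ of each paw are genuinely isolated from the rest of $G$, which can just be read off the construction.
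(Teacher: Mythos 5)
Your proposal is correct and follows essentially the same argument as the paper: decompose $G_x$ into its seven vertex-disjoint long paws (the six pendant ones plus the central paw on $\{T_x,F_x,u_x,v_x,w_x\}$) and note that in each paw $P(5)$ must be dominated and the relevant vertex ($P(5)$ itself or $P(4)$) needs a witness, which forces two vertices of $D$ inside the paw. The paper states this in one sentence; your write-up just makes the per-paw case analysis explicit.
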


Indeed for every long paw $P$ (see Figure \ref{fig:LongPaw}), the vertex $P(5)$ must be dominated and the vertex dominating $P(5)$ must have a witness; and every variable gadget contains 7 long paws. 

\begin{observation}
\label{obs:TxFx}
Let $D$ be a semitotal dominating set of $G$. If $|D \cap V(G_x)| = 14$ for some variable $x \in X$ contained in clauses $c,c'$ and $c''$, then the following holds.
\begin{itemize}
\item[1.] If $P_{x,2}^q(1) \in D$ for some $q \in \{c,c',c''\}$ then $T_x \in D$.
\item[2.] If $P_{x,1}^q(2) \in D$ for some $q \in \{c,c',c''\}$ then $F_x \in D$.
\end{itemize}
In particular, if $P_{x,2}^q(1) \in D$ for some $q \in \{c,c',c''\}$ then $D \cap \{P_{x,1}^c(2),P_{x,1}^{c'}(2),P_{x,1}^{c''}(2)\} = \emptyset$. Similarly if $P_{x,1}^q(2) \in D$ for some $q \in \{c,c',c''\}$ then $D \cap \{P_{x,2}^c(1),P_{x,2}^{c'}(1),P_{x,2}^{c''}(1)\} = \emptyset$. 
\end{observation}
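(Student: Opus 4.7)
The plan is to exploit the extremality of the hypothesis $|D \cap V(G_x)| = 14$: combining it with Observation~\ref{obs:longpaws}, one can pin down $D \cap V(G_x)$ tightly enough to simply read off the statement. First I would note that $G_x$ contains exactly seven pairwise vertex-disjoint long paws, namely the central paw on $\{T_x,F_x,u_x,v_x,w_x\}$ together with the six attached paws $P_{x,1}^q$ and $P_{x,2}^q$ for $q \in \{c,c',c''\}$. Since each long paw requires at least two vertices in any semitotal dominating set (the leaf of its tail must be dominated and its dominator must itself be witnessed, the observation used to justify Observation~\ref{obs:longpaws}), the hypothesis forces each of these seven paws to contain exactly two vertices of $D$, and in particular the six ``connector'' vertices $a_x^c,a_x^{c'},a_x^{c''},b_x^c,b_x^{c'},b_x^{c''}$ all lie outside $D$.

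For item~$1$, assuming $P_{x,2}^q(1) \in D$, I would determine the second $D$-vertex inside the paw $P_{x,2}^q$. Since $P_{x,2}^q(1)$ only dominates $P_{x,2}^q(2)$ and $P_{x,2}^q(3)$ inside the paw, the leaf $P_{x,2}^q(5)$ must still be dominated; a short case analysis on whether this is done by $P_{x,2}^q(4)$ or by $P_{x,2}^q(5)$ itself, together with the witnessing constraint and the cap of two $D$-vertices per paw, forces the second $D$-vertex to be $P_{x,2}^q(4)$, so that $P_{x,2}^q(2) \notin D$. Turning then to $b_x^q$, which is not in $D$ but must be dominated, I would list its neighbours in $G$ -- $T_x$, the other two vertices of its clique, and $P_{x,2}^q(2)$ -- and observe that the last four are already excluded from $D$, forcing $T_x \in D$. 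Item~$2$ follows by the mirror-image argument applied to the paw $P_{x,1}^q$, the connector $a_x^q$, and the vertex $F_x$.

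For the ``in particular'' statement, I would combine the two items: if some $P_{x,2}^q(1)$ and some $P_{x,1}^{q'}(2)$ both lay in $D$, then $\{T_x,F_x\} \subseteq D$, which already exhausts the two $D$-vertices allotted to the central long paw; consequently $v_x, w_x \notin D$, but then $w_x$ has no neighbour in $D$ at all -- its only neighbour in $G$ being $v_x$ -- contradicting that $D$ dominates $w_x$. The main obstacle is not conceptual but bookkeeping: one has to carefully enumerate the neighbourhoods of $b_x^q$, $a_x^q$ and $w_x$ in the full graph $G$ across both the variable and the clause gadgets to confirm that no edge from the clause side interferes with the argument. Beyond that, the whole proof is a tightness argument driven by the equality $|D \cap V(G_x)| = 14$ and the witnessing requirement on each long-paw tail.
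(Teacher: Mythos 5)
Your proof is correct and takes essentially the same route as the paper: use the equality $|D \cap V(G_x)| = 14$ with Observation~\ref{obs:longpaws} to force exactly two $D$-vertices per long paw (hence no connectors $a_x^q, b_x^q$ in $D$ and at most one of $\{P(1),P(2)\}$ per paw, at most one of $\{T_x,F_x\}$), then conclude via the domination of $b_x^q$ (resp.\ $a_x^q$). Your extra steps (pinning the second paw vertex to $P(4)$, and deriving the ``in particular'' part from the domination of $w_x$) are slightly more than the paper needs but sound; only the phrase ``the last four are already excluded'' is a harmless miscount of the listed neighbours.
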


Indeed, suppose that $|D \cap V(G_x)| = 14$ for some variable $x \in X$ contained in clauses $c,c'$ and $c''$. Observe first that by Observation \ref{obs:longpaws} necessarily $D \cap \{a_x^q,b_x^q~|~q \in \{c,c',c''\}\} = \emptyset$ and $|D \cap \{P_{x,j}^q(1),P_{x,j}^q(2)\}| \leq 1$ for any $j \in [2]$ and $q \in \{c,c',c''\}$ (similarly, $|D \cap \{T_x,F_x\}| \leq 1$). Thus if $P_{x,2}^q(1) \in D$ for some clause $q \in \{c,c',c''\}$ then $T_x \in D$ as $b_x^q$ must be dominated; and if $P_{x,1}^q(2) \in D$ for some $q \in \{c,c',c''\}$ then $F_x \in D$ as $a_x^q$ must be dominated.  

\begin{observation}
\label{obs:GcT}
Let $D$ be a semitotal dominating set of $G$. Then for every clause $c \in C$, $D \cap V(G_c^T) \neq \emptyset$. Furthermore, if $P_{x,2}^c(1) \notin D $ for every variable $x$ contained in $c$ then $|D \cap V(G_c^T)| \geq 2$.
\end{observation}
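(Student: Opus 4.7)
The proof splits naturally into the two claims, and both are driven by a careful bookkeeping of the edges incident to $V(G_c^T)$ in the construction of $G$.

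For the first claim, the plan is to focus on the vertex $u_c \in V(G_c^T)$. In the construction, $u_c$ is only given the three edges to $t_c^x$, $t_c^y$, $t_c^z$ (all of which lie in $V(G_c^T)$), and no edges are added between $u_c$ and any other gadget. Hence $N[u_c]\subseteq V(G_c^T)$, and since $D$ dominates $u_c$, it must contain a vertex of $N[u_c]$, yielding $D\cap V(G_c^T)\neq\varnothing$.

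For the second claim, I would argue by contradiction: suppose $P_{x,2}^c(1)\notin D$ for every variable $x$ appearing in $c$ and yet $|D\cap V(G_c^T)|\leq 1$. Inspecting the construction, the only edges joining $V(G_c^T)$ to $V(G)\setminus V(G_c^T)$ are the edges $t_c^p P_{p,2}^c(1)$ for $p\in\{x,y,z\}$ and the edges $w_c^{ab}P_{p,2}^c(1)$ for $p\in\{a,b\}$; the vertex $u_c$ in particular has no external neighbour. Under our hypothesis, none of the $P_{p,2}^c(1)$ lies in $D$, so every vertex of $V(G_c^T)$ must be dominated from within $V(G_c^T)$ itself. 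Combined with part~(1), this means $D\cap V(G_c^T)=\{v\}$ for a single vertex $v$ that is adjacent in $G_c^T$ to each of the other six vertices of $V(G_c^T)$; equivalently $\deg_{G_c^T}(v)\geq 6$. A quick degree count in $G_c^T$ shows however that $\deg_{G_c^T}(u_c)=3$, $\deg_{G_c^T}(w_c^{ab})=4$ for each of the three $w$-vertices, and $\deg_{G_c^T}(t_c^p)=5$ for each of the three $t$-vertices, so $\Delta(G_c^T)=5$, a contradiction.

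The only real obstacle is the bookkeeping: one needs to verify from the construction that $u_c$ has no external neighbours and that every other vertex of $V(G_c^T)$ has external neighbours only among the $P_{p,2}^c(1)$. Once this is checked, neither the witness condition of the semitotal dominating set nor any further structural property of $G$ is needed, and the argument reduces to the elementary maximum-degree computation above.
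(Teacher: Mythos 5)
Your proposal is correct and follows essentially the same route as the paper: the first part is exactly the paper's observation that $u_c$ has no neighbours outside $V(G_c^T)$ and must be dominated, and your degree count (showing a single vertex of $G_c^T$ cannot dominate all seven vertices) is just an explicit verification of the paper's appeal to $\gamma(G_c^T)=2$. No gaps.
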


Indeed since $u_c$ must be dominated $D \cap V(G_c^T) \neq \emptyset$. Now if $P_{x,2}^c(1) \notin D$ for every variable $x$ contained in $c$ then the result follows from the fact that $\gamma(G_c^T) = 2$.

\begin{observation}
\label{obs:GcF}
Let $D$ be a semitotal dominating set of $G$. Then for every clause $c \in C$ containing variables $x,y$ and $z$, if $|D \cap \{P_{x,1}^c(2),P_{y,1}^c(2),P_{z,1}^c(2)\}| < 2$ then $|D \cap V(G_c^F)| \geq 1$.
\end{observation}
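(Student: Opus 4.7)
The plan is to prove the contrapositive: assuming $D \cap V(G_c^F) = \emptyset$, I will show $|D \cap \{P_{x,1}^c(2),P_{y,1}^c(2),P_{z,1}^c(2)\}| \geq 2$. The idea is simply to inspect the neighbourhoods of the three vertices of $G_c^F$ in $G$: by the construction, the only neighbours of $f_c^{ab}$ outside of $V(G_c^F)$ are $P_{a,1}^c(2)$ and $P_{b,1}^c(2)$ (since $P_{p,1}^c(2)$ is connected to $f_c^{ab}$ iff $p\in\{a,b\}$).

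Consequently, if no vertex of $G_c^F$ lies in $D$, then in order for each of $f_c^{xy}$, $f_c^{yz}$, $f_c^{xz}$ to be dominated we need a set $S\subseteq\{P_{x,1}^c(2),P_{y,1}^c(2),P_{z,1}^c(2)\}$ such that $S$ contains an element of $\{x,y\}$, an element of $\{y,z\}$, and an element of $\{x,z\}$. This is exactly the requirement that $S$ forms a vertex cover of the triangle on $\{x,y,z\}$, which forces $|S|\geq 2$. Taking the contrapositive yields the statement. The only thing to be careful about is to list the adjacencies correctly from the construction of $G$, and to observe that no edge from $V(G_c^F)$ goes to anywhere else in $G$ besides the three vertices $P_{x,1}^c(2),P_{y,1}^c(2),P_{z,1}^c(2)$, so the domination of the $f$-vertices really does depend only on these three vertices. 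There is no significant obstacle; this is a one-paragraph routine observation comparable to Observations~\ref{obs:longpaws}, \ref{obs:TxFx} and \ref{obs:GcT}.
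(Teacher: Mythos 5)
Your proof is correct and rests on exactly the same fact as the paper's one-line argument, namely that the only neighbours of $f_c^{ab}$ outside $V(G_c^F)$ are $P_{a,1}^c(2)$ and $P_{b,1}^c(2)$; the paper argues directly (if two of the three $P_{\cdot,1}^c(2)$ vertices are missing from $D$, the corresponding $f$-vertex forces $D\cap V(G_c^F)\neq\emptyset$), while you take the contrapositive and phrase the conclusion as a vertex cover of a triangle, which is the same observation viewed from the other direction.
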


Indeed, if say $P_{x,1}^c(2),P_{y,1}^c(2) \notin D$ without loss of generality, then $N[f_c^{xy}] \setminus \{P_{x,1}^c(2),P_{y,1}^c(2)\} \cap D \neq \emptyset$ as $f_c^{xy}$ should be dominated.

\begin{claim}
\label{clm:phisatclaw}
$\gamma_{t2}(G) = 14|X| + |C|$ if and only if $\Phi$ is a \yes-instance for \textsc{Positive 1-In-3 3-Sat}.
\end{claim}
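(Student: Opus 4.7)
The plan is to first establish the universal lower bound $\gamma_{t2}(G)\ge 14|X|+|C|$ and then prove each direction of the biconditional separately. For the lower bound, I will use that the sets $V(G_x)$ (for $x\in X$), $V(G_c^T)$ and $V(G_c^F)$ (for $c\in C$) are pairwise disjoint subsets of $V(G)$; summing the inequalities of Observations~\ref{obs:longpaws} and~\ref{obs:GcT} over these sets yields $|D|\ge 14|X|+|C|$ for every semitotal dominating set $D$ of $G$.

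For the $(\Leftarrow)$ direction, given a 1-in-3 satisfying assignment $\tau$ of $\Phi$, I would construct an explicit semitotal dominating set $D$ of size exactly $14|X|+|C|$. For every variable $x$, put into $D$ fourteen vertices of $G_x$ as follows. For the central long paw on $\{T_x,F_x,u_x,v_x,w_x\}$, take $\{T_x,v_x\}$ if $\tau(x)=T$ and $\{F_x,v_x\}$ otherwise; the key point is that $F_x$ is complete to $\{a_x^c,a_x^{c'},a_x^{c''}\}$ and $T_x$ to $\{b_x^c,b_x^{c'},b_x^{c''}\}$. For each clause $c$ containing $x$, take $\{P_{x,2}^c(1),P_{x,2}^c(4)\}$ from the paw $P_{x,2}^c$ (so that $b_x^c$ is dominated via $P_{x,2}^c(1)$ and $t_c^x,w_c^{xy},w_c^{xz}$ are dominated from outside $G_c^T$). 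For the paw $P_{x,1}^c$, take $\{P_{x,1}^c(1),P_{x,1}^c(4)\}$ if $\tau(x)=T$ (which dominates $a_x^c$), and take $\{P_{x,1}^c(2),P_{x,1}^c(4)\}$ if $\tau(x)=F$ (which dominates $f_c^{xy}$ and $f_c^{xz}$, while $a_x^c$ is handled by $F_x$). Finally, for each clause $c$ with unique true literal $x$, add $t_c^x$ to $D$. A direct count gives $|D|=14|X|+|C|$, and verifying that $D$ is a semitotal dominating set is routine: all vertices are dominated (the crucial point being that $G_c^F$ is covered exactly by the two $P_{\cdot,1}^c(2)\in D$ coming from the two false literals of $c$, which is where the 1-in-3 property is used), and every vertex of $D$ has a witness at distance at most two by construction.

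For the $(\Rightarrow)$ direction, let $D$ be a semitotal dominating set of size $14|X|+|C|$. The lower bound is tight, so $|D\cap V(G_x)|=14$ for every $x$, and $|D\cap V(G_c^T)|=1$ and $|D\cap V(G_c^F)|=0$ for every $c$. Observation~\ref{obs:GcT} then forces at least one vertex of the form $P_{x,2}^c(1)$ (with $x\in c$) to lie in $D$, and Observation~\ref{obs:GcF} forces at least two vertices of the form $P_{x,1}^c(2)$ (with $x\in c$) to lie in $D$. Observation~\ref{obs:TxFx} implies that for each variable $x$ at most one of the two alternatives ``$P_{x,2}^q(1)\in D$ for some $q$'' and ``$P_{x,1}^q(2)\in D$ for some $q$'' can occur; I set $\tau(x)=T$ in the first case and $\tau(x)=F$ otherwise. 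To conclude that exactly one variable of every clause is true, I must rule out the \emph{undefined} case, namely that some $x$ contributes to neither type of vertex in any of its clauses: in such a clause $c$ containing $x$, the two required $P_{\cdot,1}^c(2)\in D$ would have to come from $y$ and $z$, which by Observation~\ref{obs:TxFx} forbids $P_{y,2}^c(1),P_{z,2}^c(1)\in D$, leaving no vertex available for the required $P_{\cdot,2}^c(1)\in D$, a contradiction. Hence $\tau$ is well-defined on $X$, and in each clause exactly one literal is true and two are false.

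The main obstacle will be the bookkeeping in the $(\Leftarrow)$ direction: given how many kinds of vertex appear in $D$, one has to check carefully that every vertex of both clause gadgets is dominated and that every vertex in $D$ has a witness within distance two. In particular, verifying that the added $t_c^x$ is witnessed by $P_{x,2}^c(1)$, and that the chosen pair in each long paw is at distance exactly two (so that semitotality is preserved), lies at the heart of the verification. The $(\Rightarrow)$ direction is conceptually cleaner once one notices that the ``undefined'' case is excluded by simultaneously invoking Observations~\ref{obs:GcT} and~\ref{obs:GcF}.
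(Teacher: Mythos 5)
Your lower bound and the $(\Rightarrow)$ direction are sound and essentially the same as the paper's argument (the paper defines the truth assignment via $T_x\in D$ rather than via the $P$-vertices, but the two are interchangeable thanks to Observation~\ref{obs:TxFx}). The genuine gap is in the $(\Leftarrow)$ construction, which misreads the variable gadget: $b_x^c$ is adjacent to $P_{x,2}^c(2)$, \emph{not} to $P_{x,2}^c(1)$; the $(1)$-vertex of the paw $P_{x,2}^c$ is the one attached to the clause gadget (to $t_c^x$ and the two $w$-vertices whose superscript contains $x$). Indeed, the paper's proof of Observation~\ref{obs:TxFx} relies precisely on the fact that $b_x^q$ can only be dominated by $T_x$ or $P_{x,2}^q(2)$. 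Consequently, with your uniform choice $\{P_{x,2}^c(1),P_{x,2}^c(4)\}$ for \emph{every} variable, the vertex $b_y^c$ of any false variable $y$ is left undominated: its only neighbours are $T_y$ (not selected since $y$ is false), the other two $b$-vertices of $G_y$ (never selected), and $P_{y,2}^c(2)$ (not selected). So the set you build is not even a dominating set.

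Note also that the repair is not merely to take $\{P_{y,2}^c(2),P_{y,2}^c(4)\}$ for false variables (as the paper does, coupling \emph{both} paws of $G_y$ to the truth value: $(1),(4)$ everywhere if the variable is true, $(2),(4)$ everywhere if it is false), because then your choice of clause vertex fails: once $P_{y,2}^c(1),P_{z,2}^c(1)\notin D$ for the two false literals $y,z$ of $c$, the vertex $w_c^{yz}$ can only be dominated by $t_c^y$ or $t_c^z$, and your added vertex $t_c^x$ is not adjacent to it. This is exactly why the paper adds $t_c^y$, a $t$-vertex of a \emph{false} literal, rather than $t_c^x$. With these two corrections (truth-dependent selection in the $P_{\cdot,2}$ paws and a false literal's $t$-vertex per clause) the verification goes through along the lines you sketch; as written, the $(\Leftarrow)$ direction does not produce a semitotal dominating set of size $14|X|+|C|$.
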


\begin{claimproof}
Assume first that $\Phi$ is a \yes-instance for \textsc{Positive 1-In-3 3-Sat} and consider a truth assignment satisfying $\Phi$. We construct a semitotal dominating set $D$ of $G$ as follows. For every variable $x \in X$ contained in clauses $c,c'$ and $c''$, if $x$ is set to true then add $\{T_x,v_x\} \cup \{P_{x,j}^{q}(1), P_{x,j}^{q}(4)~|~j \in [2], q \in \{c,c',c''\}\}$; and if $x$ is set to false then add $\{F_x,v_x\} \cup \{P_{x,j}^{q}(2), P_{x,j}^{q}(4)~|~j \in [2], q \in \{c,c',c''\}\}$. For every clause $c \in C$ containing variables $x,y$ and $z$, exactly one variable is set to true, say $x$ without loss of generality, in which case we add $t_c^y$ to $D$. It is not difficult to see that the constructed set $D$ is a semitotal dominating set of $G$ of size $14|X| + |C|$. We then conclude by Observations~\ref{obs:longpaws} and \ref{obs:GcT} that $D$ has minimum size.

Conversely, assume that $\gamma_{t2}(G) = 14|X| + |C|$ and consider a minimum semitotal dominating set $D$ of $G$. Note that by Observations~\ref{obs:longpaws} and \ref{obs:GcT}, $|D \cap V(G_x)| = 14$ for every variable $x \in X$ and $|D \cap V(G_c^T)| = 1$ for every clause $c \in C$; in particular, $D \cap V(G_c^F) = \emptyset$ for every clause $c \in C$. Now consider a clause $c \in C$ containing variables $x,y$ and $z$. Since $D \cap V(G_c^F) = \emptyset$, it follows from Observation~\ref{obs:GcF} that $|D \cap \{P_{x,1}^c(2),P_{y,1}^c(2),P_{z,1}^c(2)\}| \geq 2$, say $P_{x,1}^c(2),P_{y,1}^c(2) \in D$ without loss of generality. Note that then by Observation~\ref{obs:TxFx}, $F_x,F_y \in D$ (and thus $T_x,T_y \notin D$). We claim that then $P_{z,2}^c(1) \in D$. Indeed by Observation~\ref{obs:TxFx}, we have that $P_{x,2}^c(1), P_{y,2}^c(1) \notin D$. Thus if $P_{z,2}^c(1) \notin D$ then by Observation~\ref{obs:GcT}, $|D \cap V(G_c^T)| \geq 2$ a contradiction. Thus $P_{z,2}^c(1) \in D$ and so by Observation~\ref{obs:TxFx}, $T_z \in D$ (which implies that $F_z \notin D$). We thus construct a truth assignment satisfying $\Phi$ as follows: for every variable $x \in X$, if $T_x \in D$ then set $x$ to true, otherwise set $x$ to false.
\end{claimproof}

\begin{claim}
\label{clm:gammat2claw}
$\gamma_{t2}(G) = 14|X| + |C|$ if and only if $G$ is a \no-instance for \contracstd{}.
\end{claim}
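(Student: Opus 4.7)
The plan is to apply Theorem~\ref{thm:friendlytriple}(i)---$G$ is a \no-instance for \contracstd{} if and only if no minimum semitotal dominating set (SDS) of $G$ contains a friendly triple---and prove each direction of the equivalence separately.

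For the forward direction, I would assume $\gamma_{t2}(G) = 14|X|+|C|$ and let $D$ be any minimum SDS. By Claim~\ref{clm:phisatclaw}, $D$ satisfies $|D \cap V(G_x)| = 14$, $|D \cap V(G_c^T)| = 1$, $|D \cap V(G_c^F)| = 0$, and the central paws encode a 1-in-3 satisfying assignment. The plan is then to show that $D$ is independent, so that it cannot contain a friendly triple. In each outer long paw, the only 2-vertex SDS option containing an edge is $\{P(3), P(4)\}$; but this removes the external vertex $P_{x,2}^c(1)$ (or $P_{x,1}^c(2)$) from $D$ and forces an extra dominator in $V(G_c^T)$ (respectively $V(G_c^F)$), contradicting the tight count. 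The central paw likewise cannot use $\{u_x, v_x\}$, since this ``neutral'' choice for $x$ would add a unit of extra cost in each of $x$'s three clauses. Finally, the single $D$-vertex in $G_c^T$ must be $t_c^a$ for some false variable $a$, whose only potential $D$-neighbour $P_{a,2}^c(1)$ is not in $D$ since $a$ is false.

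For the backward direction, I would assume $\gamma_{t2}(G) > 14|X|+|C|$, so by Claim~\ref{clm:phisatclaw} no 1-in-3 satisfying assignment of $\Phi$ exists. Fix an assignment $\sigma$ minimising the number of ``bad'' clauses (those with $|c_\sigma^+| \neq 1$); a cost analysis shows that each bad clause contributes exactly one unit of extra cost, giving $\gamma_{t2}(G) = 14|X| + |C| + k$ where $k \geq 1$ is this minimum bad-count. I would then construct a minimum SDS using the canonical vertices in good clauses and \emph{edge-creating} choices in bad ones. For a 2-true clause $c$ with true variables $x, y$ and false $z$, include $t_c^x$ together with $f_c^{xy}$, yielding the friendly triple $\{t_c^x, P_{x,2}^c(1), P_{x,2}^c(4)\}$ (the edge $t_c^x P_{x,2}^c(1)$ lies in $D$ and $P_{x,2}^c(4)$ is at distance $2$ from $P_{x,2}^c(1)$ through $P_{x,2}^c(3)$). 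For a 3-true clause, include $t_c^x$ (adjacent to $P_{x,2}^c(1) \in D$) and some $f_c^{ab}$. For a 0-true clause, include the adjacent pair $\{t_c^a, t_c^b\}$ from the triangle in $V(G_c^T)$, with distance-$2$ witness $P_{a,2}^c(2) \in D$ reached via $P_{a,2}^c(1)$.

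The main obstacle lies in the backward direction: I must verify that each edge-creating construction produces a genuinely \emph{minimum} SDS (rather than a strictly larger one), which requires establishing the cost formula $\gamma_{t2}(G) = 14|X| + |C| + k$ and invoking the rigidity argument of the forward direction to rule out smaller SDS. The case analysis across the three types of bad clauses, and the verification that each chosen triple meets the adjacency/distance-$2$ conditions of a friendly triple, constitute the bulk of the technical work.
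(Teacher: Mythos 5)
Your overall skeleton (reduce both directions to the existence of a friendly triple via Theorem~\ref{thm:friendlytriple}(i)) matches the paper, but both directions contain genuine gaps. In the forward direction, your plan to show that every minimum semitotal dominating set $D$ with $|D|=14|X|+|C|$ is \emph{independent} is false, and the argument you give to exclude $\{P(3),P(4)\}$ does not work. For a long paw whose attachment vertex is not needed in $D$ --- e.g.\ $P_{y,2}^c$ when the budget for clause $c$ is already met by $P_{x,2}^c(1)$ for a different variable $x$ --- the pair $\{P_{y,2}^c(3),P_{y,2}^c(4)\}$ dominates the whole paw, the two vertices witness each other, and \emph{no} extra dominator is forced in $V(G_c^T)$, because $P_{y,2}^c(1)$ was never in $D$ to begin with (Observation~\ref{obs:GcT} only forces a second vertex in $G_c^T$ when \emph{all three} vertices $P_{p,2}^c(1)$ are absent). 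So a tight minimum SDS can contain edges. What actually saves the claim --- and what the paper proves instead --- is the weaker statement that any such edge has $P(4)$ as an endpoint and is therefore at distance at least three from every other vertex of $D$, so it cannot be completed to a friendly triple. You need that argument, not independence.

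In the backward direction you take a genuinely different route from the paper, and it rests on an unproved and hard step: the exact formula $\gamma_{t2}(G)=14|X|+|C|+k$ with $k$ the minimum number of bad clauses over all assignments. Your local cost accounting only gives the upper bound $\gamma_{t2}(G)\le 14|X|+|C|+k$; the matching lower bound requires showing that \emph{every} semitotal dominating set is essentially ``assignment-shaped'' (exactly $14$ vertices per variable gadget, none on the $a_x^q,b_x^q$ vertices, at most one per $G_c^T$, etc.), which is precisely the content of the paper's backward direction and is nowhere near automatic --- a priori an optimal $D$ could spend $15$ vertices in some $G_x$ to save two elsewhere. Without that lower bound you cannot certify that your edge-creating sets are \emph{minimum}, so Theorem~\ref{thm:friendlytriple}(i) does not apply to them. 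The paper sidesteps the formula entirely: it takes an arbitrary minimum SDS $D$ of a \no-instance and runs a sequence of local exchange arguments, each of which either produces a minimum SDS containing a friendly triple (contradiction) or pins down the structure of $D$, concluding directly that $|D|=14|X|+|C|$. If you want to keep your construction-based approach, you would still have to carry out essentially all of that structural analysis to justify minimality, so the paper's direct route is both necessary and more economical. (Your individual friendly triples, e.g.\ $\{t_c^x,P_{x,2}^c(1),P_{x,2}^c(4)\}$ and $\{t_c^a,t_c^b,P_{a,2}^c(2)\}$, are correctly verified as triples; the problem is solely the minimality of the sets containing them.)
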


\begin{claimproof}
Assume first that $\gamma_{t2}(G) = 14|X| + |C|$ and consider a minimum semitotal dominating set $D$ of $G$. Then by Observations~\ref{obs:longpaws} and \ref{obs:GcT}, $|D \cap V(G_x)| = 14$ for every variable $x \in X$ and $|D \cap V(G_c^T)| = 1$ for every clause $c \in C$; in particular, $D \cap V(G_c^F) = \emptyset$ for every clause $c \in C$. It follows that for every variable $x \in X$, $D \cap V(G_x)$ contains no friendly triple: indeed, any two distinct long paws are at distance at least 2 from one another and if some long paw $P$ contains an edge $e \in E(D)$, then $P(4)$ is an endvertex of $e$ and so $e$ is at distance at least three from any other vertex in $D \cap V(G_x)$. Now consider a clause $c \in C$ containing variables $x,y$ and $z$ and denote by $u$ the vertex in $D \cap V(G_c^T)$. Since $u$ cannot alone dominate every vertex in $V(G_c^T)$, there must exist $p \in \{x,y,z\}$ such that $P_{p,2}^c(1) \in D$, say $p=x$ without loss of generality. We claim that then $P_{y,2}^c(1),P_{z,2}^c(1) \notin D$. Indeed, if say $P_{y,2}^c(1) \in D$ then by Observation~\ref{obs:TxFx}, $P_{x,1}^c(2),P_{y,1}^c(2) \notin D$ which by Observation~\ref{obs:GcF} implies that $D \cap V(G_c^F) \neq \emptyset$, a contradiction. Thus $P_{y,2}^c(1) \notin D$ and we conclude similarly that $P_{z,2}^c(1) \notin D$. But then $u \notin \{w_c^{xz},w_c^{xy},t_c^x\}$: indeed, if $u = t_c^x$ then $w_c^{yz}$ is not dominated, and if $u \in \{w_c^{xz},w_c^{xy}\}$ then $u_c$ is not dominated. It follows that $u$ is at distance at least two from $P_{x,2}^c(1)$ and can thus not be part of a friendly triple. Hence, $D$ contains no friendly triple and so, $G$ is a \no-instance for \contracstd{} by Theorem~\ref{thm:friendlytriple}(i).

Conversely, assume that $G$ is a \no-instance for \contracstd{} and consider a minimum semitotal dominating set $D$ of $G$. Observe first that if $|D \cap V(P)| \geq 3$ for some long paw $P$ then clearly $D \cap V(P)$ contains a friendly triple. Thus for every variable $x \in X$ contained in clauses $c,c'$ and $c''$, $|D \cap V(P_{x,j}^q)| \leq 2$ for every $j \in [2]$ and $q \in \{c,c',c''\}$ (similarly, $|D \cap \{T_x,F_x,u_x,v_x,w_x\}| \leq 2$). By Observation \ref{obs:longpaws}, we conclude that in fact equality holds. We may further assume that $P(5) \notin D$ for every long paw $P$ of 
$G_x$ (consider otherwise $(D \setminus \{P(5),P(4),P(3)\}) \cup \{P(3),P(4)\}$) which implies in particular that every vertex of a long paw $P$ is dominated by some vertex in $D \cap V(P)$. It follows that for any $q \in \{c,c',c''\}$,  $b_x^q \notin D$: indeed, if $b_x^q \in D$ then $T_x \notin D$ ($D$ would otherwise contain a friendly triple, namely $b_x^q,T_x,v_x$) and so $D'=(D \setminus \{b_x^q\}) \cup  \{T_x\}$ is a minimum semitotal dominating set of $G$ containing a friendly triple, namely $D' \cap \{T_x,F_x,u_x,v_x,w_x\}$, a contradiction. By symmetry, we conclude that $a_x^q \notin D$ for any $q \in \{c,c',c''\}$ and so, $|D \cap V(G_x)| = 14$. Now consider a clause $c \in C$ containing variables $x,y$ and $z$. Suppose first that $|D \cap V(G_c^T)| \geq 2$. Then $D \cap \{P_{x,2}^c(1),P_{y,2}^c(1),P_{z,2}^c(1)\} = \emptyset$: indeed, if say $P_{x,2}^c(1) \in D$ then $(D \setminus V(G_c^T)) \cup \{t_c^x,t_c^y\}$ is a semitotal dominating set of $G$ of size at most that of $D$ containing a friendly triple, namely $t_c^x,t_c^y,P_{x,2}^c(1)$, a contradiction. Thus $P_{x,2}^c(1) \notin D$ and we conclude similarly that $P_{y,2}^c(1),P_{z,2}^c(1) \notin D$. But then $D' = (D \setminus V(G_c^T)) \cup \{t_c^y,P_{x,2}^c(1)\}$ is a semitotal dominating set of $G$ of size at most that of $D$ containing a friendly triple, namely $D' \cap  V(P_{x,2}^c)$, a contradiction. Thus $|D \cap V(G_c^T)| \leq 1$ and we conclude by Observation~\ref{obs:GcT} that in fact equality holds. Second, observe that if $|D \cap V(G_c^F)| \geq 2$, say $f_c^{yz},f_c^{xy} \in D$ without loss of generality, then $D$ contains a friendly triple as $D \cap \{P_{x,1}^c(1),P_{x,1}^c(2),P_{x,1}^c(3)\} \neq \emptyset$ by the above, a contradiction. Thus suppose that $|D \cap V(G_c^F)| =1$, say $f_c^{xy} \in D$. Then $P_{x,1}^c(2),P_{y,1}^c(2) \notin D$: indeed, if $P_{p,1}^c(2) \in D$ for some $p \in \{x,y\}$ then $f_c^{xy} \cup (D \cap V(P_{p,1}^c))$ contains a friendly triple, a contradiction. It follows that $F_x \notin D$ for otherwise $(D \setminus V(P_{x,1}^c)) \cup \{P_{x,1}^c(2),P_{x,1}^c(4)\}$ is a minimum semitotal dominating set of $G$ containing a friendly triple, namely $P_{x,1}^c(2),f_c^{xy}, D \cap \{P_{y,1}^c(1),P_{y,1}^c(3)\}$, a contradiction. We conclude similarly that $F_y \notin D$. But then we may assume that $T_x,T_y \in D$ (consider otherwise $(D \setminus \{u_x,v_x,w_x,u_y,v_y,w_y\}) \cup \{T_x,v_x,T_y,v_y\}$) and that $P_{x,2}^c(1),P_{y,2}^c(1) \in D$ (consider otherwise $(D \setminus (V(P_{x,2}^c) \cup V(P_{y,2}^c))) \cup \{P_{p,2}(1),P_{p,2}(4)~|~p \in \{x,y\}\}$). But then $(D \setminus V(G_c^T)) \cup \{t_c^x\}$ is a minimum semitotal dominating set of $G$ containing a friendly triple, namely $P_{y,2}^c(1),t_c^x,P_{x,2}^c(1)$, a contradiction. Thus $D \cap V(G_c^F) = \emptyset$ which implies that $|D \cap V(G_c)| = 1$. Therefore $|D| = 14|X| + |C|$, which concludes the proof.
\end{claimproof}

By combining Claims~\ref{clm:phisatclaw} and \ref{clm:gammat2claw}, we obtain that $G$ is a \no-instance for \contracstd{} if and only if $\Phi$ is a \yes-instance for \textsc{Positive 1-In-3 3-Sat}. As it is not difficult to see that $G$ is claw-free, this concludes the proof.

\section{Proof of Theorem~\ref{thm:2P3}}

We introduce an auxiliary problem which will be helpful in showing the $\mathsf{coNP}$-hardness of \contracstd{} in $2P_3$-free graphs.

\begin{center}
\fbox{
\begin{minipage}{5.5in}
\textsc{All Independent MSD}
\begin{description}
\item[Instance:] A graph $G$.
\item[Question:] Is every minimum semitotal dominating set of $G$ independent?
\end{description}
\end{minipage}}
\end{center}

In the following hardness proof, we reduce from the \textsc{Positive 1-In-3 3-Sat} problem which is a variant of the \textsc{3-Sat} problem where given a formula $\Phi$ in which all literals are positive, the problem is to determine whether there exists a truth assignment such that each clause has exactly one true literal. This problem was shown to be $\mathsf{NP}$-complete in \cite{schaefer}.

\begin{lemma}
\label{lem:allindepMSD}
\textsc{All Independent MSD} is $\mathsf{NP}$-hard when restricted to $2P_3$-free graphs.
\end{lemma}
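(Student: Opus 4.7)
The plan is to reduce from \textsc{Positive 1-In-3 3-Sat}, which is NP-complete. Given a formula $\Phi$ with variable set $X$ and clause set $C$, I would construct a $2P_3$-free graph $G$ and argue that $\Phi$ admits a $1$-in-$3$ satisfying assignment if and only if every minimum semitotal dominating set of $G$ is independent.

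For each variable $x \in X$, I would introduce a small variable gadget containing two distinguished literal vertices $T_x$ and $F_x$, joined by an edge and supported by auxiliary forcing vertices chosen so that any minimum semitotal dominating set contains exactly one of $T_x, F_x$; thus every such set encodes a truth assignment. For each clause $c$ on variables $x,y,z$, I would introduce a clause gadget attached to $T_x, T_y, T_z$ and designed so that (i) the gadget is dominated if and only if at least one of $T_x, T_y, T_z$ lies in the dominating set (the clause is satisfied), and (ii) when exactly one of $T_x, T_y, T_z$ is present, the clause gadget admits a minimum independent contribution with witnesses provided through distance-two paths, whereas when two or three of these literal vertices are present, every minimum contribution of the clause gadget is forced to contain an edge (or otherwise to form a non-independent set together with the literal vertices). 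This is the critical mechanism translating the $1$-in-$3$ condition into independence of the minimum semitotal dominating sets. To secure $2P_3$-freeness, I would place the whole construction on top of a central dominating clique $K$ to which every gadget is attached, so that every induced $P_3$ of $G$ meets $K$ and thus no two induced $P_3$'s can be vertex-disjoint.

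The verification would then proceed in three steps. First, a direct structural check that $G$ is $2P_3$-free, based on the fact that every vertex outside $K$ lies in a gadget attached to $K$. Second, a computation of $\gamma_{t2}(G)$: I would exhibit a canonical independent semitotal dominating set of size $s$ arising from any $1$-in-$3$ satisfying assignment, and match it with a lower bound obtained by counting the forced contributions of each variable and clause gadget. Third, establishing the characterising equivalence: if $\Phi$ is $1$-in-$3$ satisfiable, then every minimum semitotal dominating set has to follow the canonical independent structure; and if $\Phi$ is not $1$-in-$3$ satisfiable, then under any truth assignment some clause has zero or at least two satisfied literals, and I could trade the canonical contribution on that clause gadget for an equal-size non-independent contribution, exhibiting a non-independent minimum semitotal dominating set.

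The main obstacle will be engineering the clause gadget so that properties (i) and (ii) both hold while the global $2P_3$-free envelope is preserved. Because the semitotal witnessing requirement couples distant parts of a dominating set via distance-two paths, the attachments to the central clique and the internal adjacencies of the clause gadgets must be chosen with care; striking the balance between forcing non-independence in exactly the ``wrong'' $1$-in-$3$ cases while keeping the whole graph $2P_3$-free is where an intricate case analysis seems unavoidable.
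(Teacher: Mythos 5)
Your overall architecture coincides with the paper's: reduce from \textsc{Positive 1-In-3 3-Sat}, use variable gadgets with truth vertices $T_x,F_x$ forcing one vertex per variable, attach clause gadgets to the literal vertices, make the clause gadgets (or a central clique) pairwise adjacent so that every induced $P_3$ meets a clique and the graph is $2P_3$-free, and prove ``$\Phi$ is 1-in-3 satisfiable iff every minimum semitotal dominating set is independent''. However, the clause gadget --- the only genuinely nontrivial ingredient --- is never constructed, and the mechanism you prescribe for it would not work as stated. Your property (i) makes the clause gadget dominated as soon as \emph{at least one} of $T_x,T_y,T_z$ is selected, and you route the ``at most one'' half of the 1-in-3 condition through property (ii): equal-size but non-independent minimum contributions whenever two or three literal vertices are present. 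This breaks the forward direction of the equivalence: a formula that is 1-in-3 satisfiable may also admit assignments in which some clause has two or three true literals, and under your scheme such an assignment yields a semitotal dominating set of the same minimum cardinality that is \emph{not} independent, so the constructed graph would be a \no-instance of \textsc{All Independent MSD} even though $\Phi$ is a \yes-instance. The ``exactly one'' constraint must be enforced through domination feasibility (hence cardinality), not through independence.

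That is exactly what the paper's gadget does: each clause $c$ on $x,y,z$ is a $K_5$ on $\{v_c^x,v_c^y,v_c^z,u_c^T,u_c^F\}$ with $u_c^T$ joined to $T_x,T_y,T_z$, $u_c^F$ joined to $F_x,F_y,F_z$, and $v_c^s$ joined to $T_s$ and to $F_r$ for $r\neq s$; with only one vertex available per variable triangle, choosing two true vertices in a clause leaves $u_c^F$ or some $v_c^q$ undominated, so $\gamma_{t2}(G)=|X|$ holds precisely when $\Phi$ is 1-in-3 satisfiable, and independence of \emph{all} minimum sets is then shown equivalent to $\gamma_{t2}(G)=|X|$ (any minimum set meeting the clause clique can be perturbed into an equal-size non-independent one). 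A second gap is in your \no-direction sketch: you assume that when $\Phi$ is not 1-in-3 satisfiable, minimum semitotal dominating sets still correspond to truth assignments so that you can ``trade'' on a violated clause; that correspondence is exactly what fails in this case (minimum sets may use clause-gadget vertices or two vertices in a variable gadget), and the paper sidesteps it by arguing both claims through the single quantity $\gamma_{t2}(G)=|X|$ rather than through assignments. Without an explicit gadget, and with the 1-in-3 constraint routed through independence rather than domination, the proposal has a genuine gap.
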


\begin{proof}
We reduce from \textsc{Positive 1-In-3 3-Sat}: given a instance $\Phi$ of this problem, with variable set $X$ and clause set $C$, we construct an equivalent instance $G_{\Phi}$ of \textsc{All Independent MSD} as follows. For every variable $x \in X$, we introduce a triangle $G_x$ which has two distinguished \emph{truth vertices} $T_x$ and $F_x$ (we denote by $u_x$ the third vertex of $G_x$). For every clause $c \in C$ containing variables $x,y,z$, we introduce a $K_5$ denoted by $G_c$ with vertex set $\{v_c^x,v_c^y,v_c^z,u_c^T,u_c^F\}$. The adjacencies between the gadgets are as follows.
\begin{itemize}
    \item[$\cdot$] For every clause $c \in C$ containing variables $x,y,z$, we connect $u_c^T$ to $T_x,T_y,T_z$ and $u_c^F$ to $F_x,F_y,F_z$; we further connect $v_c^s$ to $T_s$ and $F_r$ for every $s\in \{x,y,z\}$ and every $r \in \{x,y,z\} \setminus \{s\}$.
    \item[$\cdot$] $\bigcup_{c \in C} V(G_c)$ induces a clique.
\end{itemize}
We denote by $G_{\Phi}$ the resulting graph.\\

Since $u_x$ must be dominated in any semitotal dominating set, we trivially have the following observation.

\begin{observation}
\label{obs:varGx}
Let $D$ be a semitotal dominating set of $G_{\Phi}$. Then $|D \cap V(G_x)| \geq 1$ for every variable $x \in X$.
\end{observation}

\begin{claim}
\label{clm:phisatind}
$\gamma_{t2}(G_{\Phi}) = |X|$ if and only if $\Phi$ is satisfiable.
\end{claim}

\begin{claimproof}
Assume that $\Phi$ is satisfiable and consider a truth assignment satisfying $\Phi$. We construct a semitotal dominating set $D$ of $G_{\Phi}$ as follows. For every variable $x \in X$, if $x$ is set to true then we add $T_x$ to $D$, otherwise we add $F_x$ to $D$. Clearly every variable gadget is dominated by some vertex in $D$. Now consider a clause $c$ containing variables $x,y,z$. Then, exactly one variable is set to true, say $x$ without loss of generality.  Since $\{T_x,F_y,F_z\} \subset D$, $v_c^x$ and $u_c^T$ are dominated by $T_x$, $v_c^y$ and $u_c^F$ are dominated by $F_z$ and $v_c^z$ is dominated by $F_y$. Furthermore, $T_x,F_y,F_z$ are pairwise at distance exactly two ($v_c^x$ is a common neighbour). Thus $D$ is a semitotal dominating set of $G_{\Phi}$ and has minimum size by Observation~\ref{obs:varGx}.

Conversely, assume that $\gamma_{t2}(G_{\Phi}) = |X|$ and let $D$ be a minimum semitotal dominating set of $G_{\Phi}$. Then by Observation~\ref{obs:varGx}, $|D \cap V(G_x)| =1$ for every variable $x \in X$ which in turn implies that $D \cap \bigcup_{c \in C} V(G_c) = \emptyset$. It follows that for any variable $x \in X$, $u_x \notin D$: indeed, if $u_x \in D$ for some variable $x \in X$ then $u_x$ has no witness as $D \cap (\{T_x,F_x\} \cup \bigcup_{c \in C} V(G_c)) = \emptyset$, a contradiction. Now consider a clause $c \in C$ containing variables $x,y,z$ and suppose that there exist two variables $s,r \in \{x,y,z\}$ such that $\{T_s,T_r\} \subset D$. Then one of $u_c^F$ and $v_c^q$ where $q \in \{x,y,z\} \setminus \{s,r\}$ is not dominated: indeed, either $T_q \in D$ in which case $u_c^F$ is not dominated, or $F_q \in D$ in which case $v_c^q$ is not dominated. Thus there exists at most one variable $s \in \{x,y,z\}$ such that $T_s \in D$ and since $u_c^T$ must be dominated, we conclude that such a variable exists. Thus, the truth assignment constructed by setting $x$ to true if $T_x \in D$ and $x$ to false if $F_x \in D$ satisfies $\Phi$.
\end{claimproof}

\begin{claim}
\label{clm:allind}
$\gamma_{t2}(G_{\Phi}) = |X|$ if and only if $G_{\Phi}$ is a \yes-instance for \textsc{All Independent MSD}.
\end{claim}

\begin{claimproof}
Assume that $\gamma_{t2}(G_{\Phi}) = |X|$ and let $D$ be an arbitrary minimum semitotal dominating set of $G_{\Phi}$. Then by Observation~\ref{obs:varGx}, $|D \cap V(G_x)| = 1$ for any variable $x \in X$ which implies that $D \cap \bigcup_{c \in C} V(G_c) = \emptyset$. Thus $D$ is independent and so $G_{\Phi}$ is a \yes-instance for \textsc{All Independent MSD}.

Conversely, assume that $G_{\Phi}$ is a \yes-instance for \textsc{All Independent MSD} and let $D$ be a minimum semitotal dominating set of $G_{\Phi}$. Since $D$ is independent, $|D \cap V(G_x)| \leq 1$ for any variable $x \in X$ and we conclude by Observation~\ref{obs:varGx} that in fact equality holds. Furthermore, we may assume that for any variable $x \in X$, $u_x \notin D$ as it suffices to consider $(D \setminus \{u_x\}) \cup \{T_x\}$ otherwise. It follows that if two variables $x$ and $y$ both occur in some clause $c$ and $r \in D \cap V(G_x)$ and $s \in D \cap V(G_y)$, then $r$ and $s$ witness each other: indeed, $d(T_x,T_y) = d(F_x,F_y) = d(T_x,F_y) = d(T_y,F_x) =2$. Now consider a clause $c \in C$ containing variables $x,y,z$, and suppose to the contrary that there exists $w \in V(G_c)\cap D$. Since $D$ is independent, $(D \cap \bigcup_{c'\in C} V(G_{c'}))= \set{w}$ (recall that $\bigcup_{c \in C} V(G_c)$ induces a clique). Furthermore, by the previous observation, any vertex $t \in D$ witnessed by $w$ is also witnessed by a vertex in $D \setminus \{w\}$. Thus we may replace $w$ with either $u_c^T$ or $u_c^F$ and obtain a semitotal dominating set of $G_{\Phi}$ which is not independent: if $D \cap \{F_x,F_y,F_z\} \neq \emptyset$ then $(D \setminus \{w\}) \cup \{u_c^F\}$ is a minimum semitotal dominating set of $G_{\Phi}$ which is not independent, otherwise $(D \setminus \{w\}) \cup \{u_c^T\}$ is a minimum semitotal dominating set of $G_{\Phi}$ which is not independent. Since this contradicts the fact that $G_{\Phi}$ is a \yes-instance for \textsc{All Independent MSD}, we conclude that $D \cap \bigcup_{c \in C} V(G_c) = \emptyset$ and so $\gamma_{t2}(G_{\Phi}) = |D| = |X|$. 
\end{claimproof}

By combining Claims~\ref{clm:phisatind} and \ref{clm:allind}, we obtain that $\Phi$ is satisfiable if and only if $G_{\Phi}$ is a \yes-instance for \textsc{All Independent MSD}. Finally, it is not difficult to see that $G_{\Phi}$ is $2P_3$-free, which concludes the proof.
\end{proof}

\begin{lemma}
\label{lem:2P3Ind}
Let $G$ be a $2P_3$-free graph. Then $G$ is a \yes-instance for \textsc{1-Edge Contrac-tion($\gamma_{t2}$)} if and only if $G$ is a \no-instance for \textsc{All Independent MSD}.
\end{lemma}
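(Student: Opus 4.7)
The forward implication follows immediately from Theorem~\ref{thm:friendlytriple}(i): if $G$ is a \yes-instance for \contracstd{}, then some minimum semitotal dominating set (MSD) of $G$ contains a friendly triple, hence contains an edge, and is therefore non-independent, making $G$ a \no-instance for \textsc{All Independent MSD}.

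For the converse I would argue the contrapositive: assuming no MSD of $G$ contains a friendly triple, every MSD of $G$ must be independent. Fix an MSD $D$ and suppose, for contradiction, that $xy \in E(G[D])$. Because $D$ contains no friendly triple, every $u \in D \setminus \{x,y\}$ satisfies $d_G(u,x) \geq 3$ and $d_G(u,y) \geq 3$; in particular $x$ and $y$ are each other's unique witnesses in $D$. Setting aside the easy case $|D|=2$, I would pick some $u \in D \setminus \{x,y\}$ together with its witness $v \in D \setminus \{u,x,y\}$ (so $d_G(u,v) \leq 2$), and aim for a contradiction with $2P_3$-freeness by exhibiting an induced $2P_3$ in $G$.

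The distance inequality $d_G(\{x,y\},\{u,v\}) \geq 3$ immediately yields $N[\{x,y\}] \cap N[\{u,v\}] = \varnothing$, so induced $P_3$s found separately inside $N[\{x,y\}]$ and inside $N[\{u,v\}]$ are automatically vertex-disjoint. For the first $P_3$ I would case-split on whether $x$ and $y$ are true twins: if not, some neighbour $a$ of one of them not adjacent to the other gives an induced path $y - x - a$; if they are, the connectivity of $G$ together with the $\geq 3$ distance bound forces a common neighbour $a$ of $x$ and $y$, and swapping $y$ for $a$ in $D$ yields another MSD whose spanning edge $xa$ is not between true twins (or already belongs to a friendly triple). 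A parallel case analysis for $\{u,v\}$, splitting on $uv \in E(G)$ versus $d_G(u,v)=2$, provides the second induced $P_3$ via either a private neighbour of $u$ or $v$, or a common neighbour of $u$ and $v$.

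The final delicate step, which I expect to be the main obstacle, is to rule out crossing edges between the vertex sets of the two $P_3$s; such an edge would force $d_G(\{x,y\},\{u,v\})$ to equal exactly three via a length-three shortest path through prescribed intermediate vertices. I would address this by choosing $u \in D \setminus \{x,y\}$ so as to maximise $d_G(\{x,y\},u)$ and by carefully selecting the auxiliary vertices in each $P_3$ to avoid lying on such a shortest path, exploiting the minimality of $D$ (in particular the private-neighbour and witness constraints) to ensure suitable choices exist. Once the induced $2P_3$ is produced, this contradicts the $2P_3$-freeness of $G$ and forces $D$ to be independent, completing the proof.
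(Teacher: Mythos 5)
Your forward direction is correct and matches the paper. The converse, however, has a genuine gap at exactly the step you yourself flag as delicate. Your hypothesis only gives $d_G(\{x,y\},\{u,v\})\geq 3$, so pairs at distance exactly three are allowed, and in that regime $2P_3$-freeness typically \emph{forces} the auxiliary vertices of your two candidate paths to be adjacent rather than letting you avoid crossing edges. Concretely, in the configuration where $d(x,u)=3$ and both $y$ and the witness $v$ have private neighbours, $2P_3$-freeness forces every private neighbour of $y$ to be adjacent to every private neighbour of $v$ (otherwise the six vertices already induce a $2P_3$); in this situation there is simply no induced $2P_3$ inside $N[\{x,y\}]\cup N[\{u,v\}]$, so no ``careful selection'' of auxiliary vertices off shortest paths can produce one. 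The graph is not required to contain an induced $2P_3$ near this configuration at all: the contradiction with your global hypothesis must instead come from building a \emph{different} minimum semitotal dominating set that contains a friendly triple. This is what the paper does in precisely these cases -- e.g.\ replacing $y$ and the witness $w$ by a pair of adjacent private neighbours $p_y,p_w$, or replacing $y$ by an internal vertex of a shortest path when $y$ has no private neighbour -- and verifying each time that the modified set is still a minimum semitotal dominating set.

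That vertex-swapping machinery, organised as a case analysis on $d(x,z)\in\{3,4,5\}$ for the \emph{closest} vertex $z\in D\setminus\{x,y\}$ and on whether its witness is at distance one or two, is the substance of the paper's argument and is essentially absent from your proposal (your true-twins swap is a much weaker special case). Note also that choosing $u$ to \emph{maximise} its distance from $\{x,y\}$ works against the repair arguments: the paper minimises this distance, which is what guarantees both the bound $d\leq 5$ via $2P_3$-freeness and that the replacement sets remain dominating and semitotal (no other vertex of $D$ was witnessed by $x$ or $y$, and the swapped-in path vertices cover the lost neighbourhoods). As written, your plan to conclude by exhibiting an induced $2P_3$ cannot be completed, and filling the gap would amount to reproducing the paper's case analysis.
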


\begin{proof}
If $G$ is a \yes-instance for \contracstd{} then by Theorem~\ref{thm:friendlytriple}(i) $G$ has a minimum semitotal dominating set containing a friendly triple which a fortiori is not independent. Thus, $G$ is a \no-instance for \textsc{All Independent MSD}.

Conversely, assume that $G$ is a \no-instance for \textsc{All Independent MSD} and let $D$ be a minimum semitotal dominating set of $G$ which is not independent. Suppose that $D$ contains no friendly triple and let $x,y \in D$ be two adjacent vertices. Consider a vertex $z \in D\setminus \{x,y\}$ such that $d(z,\{x,y\}) = \min_{u \in D \setminus \{x,y\}} d(u,\{x,y\})$ and assume without loss of generality that $d(z,\{x,y\}) = d(z,x)$. By assumption $d(x,z) >2$ and since $G$ is $2P_3$-free, $d(x,z) \leq 5$.

Suppose first that $d(x,z) = 3$ and let $P= xuvz$ be a shortest path from $x$ to $z$. Let $w \in D$ be a witness for $z$. Suppose first that $w$ is adjacent to $z$. If $y$ has no private neighbour then $(D \setminus \{y\}) \cup \{u\}$ is a minimum semitotal dominating set of $G$ (indeed, since $D$ contains no friendly triple by assumption, $y$ is a witness for $x$ only) containing a friendly triple, namely $x,u,z$. We conclude similarly if $w$ has no private neighbour. Thus we may assume that both $y$ and $w$ have at least one private neighbour, say $p_y$ and $p_w$ respectively. But then the private neighbourhood of $w$ must be complete to the private neighbourhood of $y$: indeed, if $y$ has a private neighbour $a$ and $w$ has a private neighbour $b$ such that $a$ and $b$ are nonadjacent then $\{a,y,x,b,w,z\}$ induces a $2P_3$, a contradiction. It follows that $(D \setminus \{y,w\}) \cup \{p_y,p_w\}$ is a minimum semitotal dominating set containing a friendly triple, namely $p_y,p_w,x$. Second, suppose that $d(z,w) = 2$. If $w$ is adjacent to $v$ then every private neighbour of $y$ is adjacent to $v$: indeed, if $y$ has a private neighbour $p_y$ which is non adjacent to $v$ then $\{p_y,y,x,z,v,w\}$ induces a $2P_3$, a contradiction. But then $(D \setminus \{y\}) \cup \{v\}$ is a minimum semitotal dominating set containing a friendly triple, namely $z,v,w$. Thus, assume that $w$ is nonadjacent to $v$ and let $t$ be the internal vertex in a shortest path from $z$ to $w$. If $x$ has no private neighbour then necessarily $y$ is adjacent to $u$ ($u$ would otherwise be a private neighbour of $x$) and so the minimum dominating set $(D \setminus \{x\}) \cup \{u\}$ contains a friendly triple, namely $y,u,z$. Thus, assume that $x$ has at least one private neighbour. Then every private neighbour $p_x$ of $x$ must be adjacent to $t$ for otherwise $\{p_x,x,y,z,t,w\}$ induces a $2P_3$; in particular, $x$ is at distance two from $t$. Similarly, we conclude that every private neighbour of $y$ is adjacent to $t$. It then follows that $(D \setminus \{y\}) \cup \{t\}$ is a minimum semitotal dominating set of $G$ containing a friendly triple, namely $z,t,w$. 

Suppose next that $d(x,z) = 4$ and let $P = xuvtz$ be a shortest path from $x$ to $z$. Let $w \in D$ be a witness for $z$. Suppose first that $w$ is adjacent to $z$. We claim that either $y$ has no private neighbour of $w$ has no private neigbor. Indeed, if $y$ has a private neighbour $p_y$ and $w$ has a private neighbour $p_w$ then $p_y$ and $p_w$ must be adjacent for otherwise $\{p_y,y,x,p_w,w,z\}$ induces a $2P_3$, a contradiction. But then $d(y,w) \leq 3 < d(x,z)$ a contradiction to our assumption. Thus assume without loss of generality that $y$ has no private neighbour. Then it suffices to consider $(D \setminus \{y\}) \cup \{u\}$ and go back to the previous case. Second, suppose that $d(z,w) = 2$ and let $q$ be the internal vertex in a shortest path from $z$ to $w$. Then $y$ has no private neighbour: indeed if $y$ has a private neighbour $a$ then $a$ is adjacent to $q$ ($\{a,y,x,w,q,z\}$ would otherwise induce a $2P_3$) which implies that $d(y,z) \leq 3 < d(x,z)$, a contradiction to our assumption. But then it suffices to consider $(D \setminus\{y\}) \cup \{u\}$ and go back to the previous case. 

Suppose finally that $d(x,z) = 5$ and let $P = u_1\ldots u_6$ where $u_1=x$ and $u_6=z$, be a shortest path from $x$ to $z$. Then $y$ has no private neighbour: indeed, if $y$ has a private neighbour $a$ then $a$ is adjacent to either $u_4$ or $u_5$ (since $\{a,y,x,u_4,u_5,z\}$ would otherwise induce a $2P_3$) and so $d(y,z) \leq 4 < d(x,z)$, a contradiction to our assumption. But then it suffices to consider $(D \setminus \{y\}) \cup \{u_2\}$ and go back to the previous case, which concludes the proof.
\end{proof}

Theorem~\ref{thm:2P3} now follows from Lemmas~\ref{lem:allindepMSD} and \ref{lem:2P3Ind}.

\section{Proof of Theorem~\ref{thm:P6P4+P2}}

We use the same construction as in \cite[Theorem 3.1]{domcontract}: given an instance $(G,\ell)$ of \textsc{Dominating Set}, we construct an equivalent instance $G'$ of \contracstd{} as follows. We denote by $\{v_1,\ldots,v_n\}$ the vertex set of $G$. The vertex set of the graph $G'$ is given by $V(G')=V_0\cup\ldots\cup V_\ell\cup\{x_0,\ldots,x_\ell,y\}$, where each $V_i$ is a copy of the vertex set of $G$. We denote the vertices of $V_i$ by $v^i_1,v^i_2,\ldots,v^i_n$. The adjacencies in $G'$ are then defined as follows:
\begin{itemize}
\item[$\cdot$] $V_0\cup\{x_0\}$ is a clique;
\item[$\cdot$] $yx_0\in E(G')$;
\end{itemize}
and for $1\leq i\leq \ell$,
\begin{itemize}
\item[$\cdot$] $V_i$ is an independent set;
\item[$\cdot$] $x_i$ is adjacent to all the vertices of $V_0\cup V_i$;
\item[$\cdot$] $v^i_j$ is adjacent to $\{v^0_a~|~v_a\in N_G[v_j]\}$ for any $1 \leq j \leq n$.
\end{itemize}

\begin{figure}[t]
\centering
\begin{tikzpicture}[scale=.5]
\draw (0,0) ellipse (2cm and .8cm);
\node[draw=none] at (0,0) {$V_0$};

\node[circ,label=above left:{\small $x_1$}] (x1) at (2.21,1.27) {};
\draw[rotate=37] (4.5,0) ellipse (1cm and .5cm);
\node[draw=none] at (3.59,2.71) {$V_1$};
\draw[very thick] (x1) -- (1.99,.07)
(x1) -- (.17,.8)
(x1) -- (2.74,2.34)
(x1) -- (3.18,1.99);

\node[circ,label=below left:{\small $x_2$}] (x2) at (2.21,-1.27) {};
\draw[rotate=-37] (4.5,0) ellipse (1cm and .5cm);
\node[draw=none] at (3.59,-2.71) {$V_2$};
\draw[very thick] (x2) -- (1.99,-.07)
(x2) -- (.17,-.8)
(x2) -- (2.74,-2.34)
(x2) -- (3.18,-1.99);

\node[draw=none] at (0,-1.8) {$\ldots$};

\node[circ,label=below right:{\small $x_{\ell}$}] (xl) at (-2.21,-1.27) {};
\draw[rotate=37] (-4.5,0) ellipse (1cm and .5cm);
\node[draw=none] at (-3.59,-2.71) {$V_{\ell}$};
\draw[very thick] (xl) -- (-1.99,-.07)
(xl) -- (-.17,-.8)
(xl) -- (-2.74,-2.34)
(xl) -- (-3.18,-1.99);

\node[circ,label=above right:{\small $x_0$}] (xl1) at (-2.21,1.27) {};
\node[circ,label=left:{\small $y$}] (y) at (-3,2) {};
\draw[very thick] (xl1) -- (-1.99,.07)
(xl1) -- (-.17,.8);
\draw[-] (xl1) -- (y);
\end{tikzpicture}
\caption{The graph $G'$ (thick lines indicate that the vertex $x_i$ is adjacent to every vertex in $V_0$ and $V_i$, for $i=0,\ldots,\ell$).}
\label{fig:dom1ec}
\end{figure}
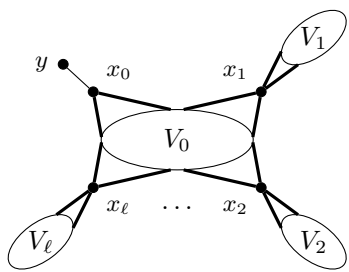

\begin{claim}
\label{claim:gammaG'bis}
$\gamma_{t2}(G') = \min \{\gamma (G) + 1, \ell + 1\}$. 
\end{claim}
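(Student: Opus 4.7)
The plan is to establish the two inequalities separately. For the upper bound $\gamma_{t2}(G') \leq \min\{\gamma(G)+1, \ell+1\}$, I will exhibit two explicit semitotal dominating sets, one of each size. For the lower bound, I will prove the stronger statement $\gamma(G') \geq \min\{\gamma(G)+1, \ell+1\}$, which suffices since $\gamma \leq \gamma_{t2}$ in general.

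For the first upper-bound witness, take $\{x_0, x_1, \ldots, x_\ell\}$. Every vertex of $V_i$ (for $i\geq 1$) is adjacent to $x_i$, $y$ is adjacent to $x_0$, and every vertex of $V_0$ is adjacent to $x_0$ (since $V_0\cup\{x_0\}$ is a clique), so the set dominates $G'$; moreover, any two of its members are at distance at most two via any vertex of $V_0$, which gives the semitotal condition. For the second witness, let $D_G$ be a minimum dominating set of $G$ and take $\{x_0\} \cup \{v_a^0 : v_a \in D_G\}$. The key observation is that $N_{G'}(v_j^i) \cap V_0 = \{v_a^0 : v_a \in N_G[v_j]\}$ for $i\geq 1$, so the fact that $D_G$ dominates $v_j$ in $G$ translates immediately into $v_j^i$ being dominated in $G'$; the clique $V_0\cup\{x_0\}$ handles domination of $V_0$, $x_0$, $y$ and each $x_i$, and makes the whole set pairwise at distance one, giving the semitotal condition automatically.

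For the lower bound, let $D$ be a dominating set of $G'$ and assume $|D| \leq \ell$ (otherwise the bound already holds). Consider the $\ell$ pairwise disjoint sets $A_i = V_i \cup \{x_i\}$, $i=1,\ldots,\ell$, none of which meets $\{y, x_0\} \cup V_0$. If $D \cap A_i \neq \emptyset$ for every $i$, then $|D|=\ell$ forces $D \subseteq \bigcup_{i=1}^{\ell} A_i$, but then $y$ has no dominator in $D$ (its unique neighbour is $x_0 \notin \bigcup A_i$), a contradiction. Hence some $A_j$ is disjoint from $D$, which forces $V_j$ to be dominated entirely by $D \cap V_0$; by the description of the adjacencies between $V_0$ and $V_j$, the preimage of $D \cap V_0$ in $V(G)$ is therefore a dominating set of $G$, yielding $|D \cap V_0| \geq \gamma(G)$. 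Since $y$ still needs a dominator, $D$ must also contain $y$ or $x_0$, a vertex outside $V_0$, and we conclude $|D| \geq \gamma(G)+1$, completing the bound.

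The argument is fairly mechanical; the one small subtlety I want to flag is the grouping used for the pigeonhole step: it is essential to bundle each $x_i$ with its $V_i$ (rather than treating $\{x_1,\ldots,x_\ell\}$ separately), since it is only when an entire block $A_j$ is missed by $D$ that $D \cap V_0$ alone must dominate all of $V_j$ and hence correspond to a dominating set of $G$. I do not foresee any additional obstacle.
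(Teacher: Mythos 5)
Your proof is correct and follows essentially the same route as the paper: the same two witness sets $\{x_0,x_1,\ldots,x_\ell\}$ and $\{x_0\}\cup\{v_a^0 : v_a\in D_G\}$ give the upper bound, and the lower bound likewise combines the observation that dominating $y$ forces $y$ or $x_0$ into the set with a pigeonhole argument producing an index $j$ whose copy $V_j$ must be dominated from $V_0$, which projects to a dominating set of $G$ of size at least $\gamma(G)$. The only cosmetic difference is that you bundle $x_j$ with $V_j$ and state the lower bound for $\gamma(G')$ rather than $\gamma_{t2}(G')$ (the paper only extracts $x_i\notin D'$ and works with $D'\cap(V_0\cup V_i)$), which changes nothing essential.
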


\begin{claimproof}
It is clear that $\{x_0,x_1, \ldots, x_\ell\}$ is a semitotal dominating set of $G'$ and so $\gamma_{t2}(G') \leq \ell +1$. If $\gamma (G) \leq \ell$ and $\{v_{i_1},\ldots, v_{i_p}\}$ is a minimum dominating set of $G$, it is easily seen that $\{v^0_{i_1},\ldots, v^0_{i_p},x_0\}$ is a semitotal dominating set of $G'$. Thus, $\gamma_{t2}(G') \leq \gamma (G) + 1$ and so, $\gamma_{t2}(G') \leq \min \{\gamma (G) + 1, \ell + 1\}$. Now, suppose to the contrary that $\gamma_{t2}(G') < \min \{\gamma (G) + 1, \ell + 1\}$ and consider a minimum semitotal dominating set $D'$ of $G'$. We first make the following simple observation.

\begin{observation}
\label{obs:yxl+1bis}
For any semitotal dominating set $D$ of $G'$, $D \cap \{y,x_0\} \neq~\emptyset$.
\end{observation}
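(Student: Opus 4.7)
The observation is essentially immediate from the construction, so the proposal is very short. My plan is to argue directly from the neighborhood structure of the vertex $y$: by construction, the only edge incident to $y$ is $yx_0$, so $N_{G'}(y) = \{x_0\}$. Consequently, if $y \notin D$, then the only vertex that can possibly dominate $y$ is $x_0$, and hence $x_0 \in D$. Either way, $D \cap \{y,x_0\} \neq \emptyset$.

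The proof is a one-line case distinction on whether $y \in D$, so there is no real obstacle. I would just write: ``By construction, $N_{G'}(y) = \{x_0\}$. If $y \in D$ we are done; otherwise $y$ must be dominated by a vertex of $D$, which forces $x_0 \in D$.'' Note that the semitotality condition plays no role here; the statement already follows from the fact that $D$ is a dominating set. Still, I would state it for semitotal dominating sets since that is how the claim will be used in the subsequent case analysis computing $\gamma_{t2}(G')$.
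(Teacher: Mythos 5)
Your proposal is correct and matches the paper's (implicit) reasoning: the paper states this as a ``simple observation'' without proof, and the intended justification is exactly yours, namely that $N_{G'}(y)=\{x_0\}$, so either $y\in D$ or $y$ is dominated by $x_0$. Your remark that only the domination property is used is also accurate.
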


Now, since $\gamma_{t2}(G') < \ell + 1$, there exists $1 \leq i \leq \ell$ such that $x_i \not\in D'$ (otherwise, $\{x_1,\ldots,x_{\ell}\} \subset D'$ and combined with Observation \ref{obs:yxl+1bis}, $D'$ would be of size at least $\ell + 1$). But then, $D'' = D' \cap (V_0 \cup V_i)$ must dominate every vertex in $V_i$, and so $\vert D'' \vert \geq \gamma(G)$. Since $\vert D'' \vert \leq \vert D' \vert - 1$ (recall that $D' \cap \{y,x_0\} \neq \emptyset$), we then have $\gamma (G) \leq \vert D' \vert - 1$, a contradiction. Thus, $\gamma_{t2}(G') = \min \{\gamma (G) + 1, \ell + 1\}$.
\end{claimproof}

We now show that $(G,\ell)$ is a \yes-instance for \textsc{Dominating Set} with $\gamma(G) \geq 2$ if and only if $G'$ is a \yes-instance for \contracstd{}.

Assume first that $\gamma (G) \leq \ell$. Then $\gamma_{t2}(G') = \gamma(G) + 1$ by the previous claim, and if $\{v_{i_1},\ldots,v_{i_p}\}$ is a minimum dominating set of $G$, then $\{v^0_{i_1},\ldots,v^0_{i_p},x_0\}$ is a minimum semitotal dominating set of $G'$ containing a friendly triple (recall that we assume that $\gamma(G) \geq 2$). Hence, by Theorem \ref{thm:friendlytriple}(i), $G'$ is a \yes-instance for \contracstd{}.

Conversely, assume that $G'$ is a \yes-instance for \contracstd{} that is, there exists a minimum semitotal dominating set $D'$ of $G'$ containing a friendly triple (see Theorem \ref{thm:friendlytriple}(i)), say $x,y,z$ where $x$ and $y$ are adjacent and $d_{G'}(y,z) \leq 2$. Then, Observation \ref{obs:yxl+1bis} implies that there exists $1 \leq i \leq \ell$ such that $x_i \not\in D'$; indeed, if it weren't the case, we would then have by Claim \ref{claim:gammaG'bis} $\gamma (G') = \ell + 1$ and thus, $D'$ would consist of $x_1, \ldots, x_{\ell}$ and either $y$ or $x_0$. In both cases, $D'$ would not contain a friendly triple, a contradiction. It follows that $D'' = D' \cap (V_0 \cup V_i)$ must dominate every vertex in $V_i$ and thus, $\vert D'' \vert \geq \gamma (G)$. But $\vert D'' \vert \leq \vert D' \vert -1$ (recall that $D' \cap \{y,x_0\} \neq \emptyset$) and so by Claim \ref{claim:gammaG'bis}, $\gamma (G) \leq \vert D' \vert - 1 \leq (\ell + 1) - 1$ that is, $(G,\ell)$ is a \yes-instance for \textsc{Dominating Set}.\\

Since it was shown in \cite[Theorem 3.1]{domcontract} that $G'$ is a $\{P_6,P_4+P_2\}$-free chordal graph, the result follows.

\section{Proof of Claim~\ref{clm:Rempty}}

We here turn to the proof of Claim~\ref{clm:Rempty}. Assume henceforth that $\mathcal{R} = \varnothing$. We first prove the following claims.

\begin{claim}\label{ComponentsD}
If $G$ is a \no-instance for \contracstd{} and $D$ is a minimum semitotal dominating set of $G$, then every connected component of $G[D]$ has cardinality at most two and there are at most $|A|$ components of cardinality 2.
\end{claim}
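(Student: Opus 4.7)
The plan hinges on Theorem~\ref{thm:friendlytriple}(i): since $G$ is a \no-instance for \contracstd{}, no minimum semitotal dominating set of $G$ contains a friendly triple --- a triple $\{x,y,z\}$ with $xy\in E(G)$ and $d_G(y,z)\le 2$ --- and in particular $D$ contains none. Both assertions of the claim will be deduced from this forbidden pattern.

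For the first conclusion I would argue by contradiction. If some connected component of $G[D]$ had at least three vertices, then, being connected, it would contain a path on three vertices as a (not necessarily induced) subgraph, giving $x,y,z\in D$ with $xy,yz\in E(G)$. But then $\{x,y,z\}$ is a friendly triple in $D$, contradicting the no-friendly-triple property.

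For the second conclusion, let $\{x_1,y_1\},\dots,\{x_p,y_p\}$ denote the size-2 components of $G[D]$. I would first observe that they are pairwise at distance at least three in $G$: if $d_G(u,v)\le 2$ for some $u\in\{x_i,y_i\}$ and $v\in\{x_j,y_j\}$ with $i\neq j$, then the edge $x_iy_i$ together with the vertex $v$ forms a friendly triple. Next I would invoke the fact that $C$ is independent (already recorded at the start of the proof of Theorem~\ref{thm:P3kP2}, from the $(P_3+kP_2)$-freeness of $G$). Since $x_iy_i\in E(G)$, the component cannot lie entirely in $C$, so one of its endpoints lies in $A\cup B$; from this endpoint I would exhibit a vertex $a_i\in A\cap N[\{x_i,y_i\}]$ --- either the endpoint itself if it already lies in $A$, or one of its neighbours in $A$ if it lies in $B$.

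To finish, I would check that $i\mapsto a_i$ is injective: if $a_i=a_j$ for some $i\neq j$, then $a_i$ is at distance at most one from both components, whence $d_G(\{x_i,y_i\},\{x_j,y_j\})\le 2$, contradicting the pairwise-distance step. Hence the $a_i$'s are distinct elements of $A$, giving $p\le|A|$. I do not foresee a serious obstacle: the only substantive structural ingredient needed beyond the no-friendly-triple condition is the independence of $C$ (which forces each size-2 component to touch $A$), and once this is in place the bound reduces to a clean disjoint-hitting-set count in $A$.
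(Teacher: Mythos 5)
Your proof is correct and follows essentially the same route as the paper: a component of size at least three yields a friendly triple, the size-2 components are pairwise at distance at least three, and each is then charged to a distinct vertex of $A$. The only (harmless) difference is that you justify that each size-2 component lies in $N[A]$ via the independence of $C$ and the partition $V(G)=A\cup B\cup C$, whereas the paper observes directly that a size-2 component anticomplete to $A$ would, together with $A$, induce a $P_3+kP_2$.
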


\begin{claimproof}
The first claim follows from Theorem~\ref{thm:friendlytriple}(i) since any connected component of size at least three in $G[D]$ would contain a friendly triple. For the second claim, since any component of size two has to be at distance at least three to every other component ($D$ would otherwise contain a friendly triple), every vertex of $A$ can be adjacent to at most one component of size two; and every size-two-component  $C_0$ of $G[D]$ has to be adjacent to at least one vertex in $A$ for otherwise $A \cup C_0$ would induce a $P_3+kP_2$.
\end{claimproof}

Assume henceforth that $G$ is a \no-instance. In the following, given a minimum semitotal dominating $D$ of $G$, we denote by $D' \subseteq D$ the set of size-one components in $D$ (note that $D'$ is an independent set).

\begin{claim}\label{OnlyOnePNInC}
Let $D$ be a minimum semitotal dominating set of $G$. If there exists a vertex $b\in B\cap D'$ which has more than one private neighbour in $C$ then $|B\cap D'|\leq k|A|$.
\end{claim}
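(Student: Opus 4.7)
The plan is to argue by contradiction: suppose $|B \cap D'| > k|A|$ and exhibit an induced $P_3 + kP_2$ in $G$, contradicting the $P_3+kP_2$-freeness of $G$.

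The $P_3$-part will simply be the induced path $c_1\,b\,c_2$, where $c_1,c_2 \in C$ are the two private neighbours of $b$ furnished by hypothesis; this is induced because $C$ is independent. I would first record three non-adjacency facts that will be used repeatedly: (a) every $b^* \in (B\cap D')\setminus\{b\}$ satisfies $b^*b \notin E(G)$, since $D'$ is independent; (b) every such $b^*$ also satisfies $b^*c_1, b^*c_2 \notin E(G)$, because $c_1, c_2$ are private neighbours of $b$ and $b^* \in D$; (c) every $a \in A$ satisfies $ac_1, ac_2 \notin E(G)$, because $C$ lies at distance exactly two from $A$. The upshot is that the only potentially problematic adjacencies between the chosen $P_3$ and the rest of the graph pass through $N(b) \cap A$.

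Next, I would aim to extract $k$ vertex-disjoint induced edges of the form $(b_i^*, a_i)$ with $b_i^* \in (B\cap D')\setminus\{b\}$ and $a_i \in A \setminus N(b)$. A double-counting argument using the hypothesis $|B \cap D'| > k|A|$ together with the fact that every vertex of $B \cap D'$ has at least one neighbour in $A$ produces some $a^* \in A$ with more than $k$ neighbours in $B \cap D'$; iterating this pigeonhole on the residual bipartite graph, and exploiting the fact that $G[A]$ is itself the sparse graph $P_3 + (k-1)P_2$, would allow the required matching to be built greedily one edge at a time.

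The main obstacle is controlling the class of ``bad'' vertices $b^* \in B \cap D'$ whose entire $A$-neighbourhood is contained in $N(b)\cap A$: any edge incident to such a $b^*$ would land in $N(b)\cap A$, creating an unwanted adjacency between $b$ and the $P_2$-part and thus destroying the induced $P_3 + kP_2$ structure. The plan to handle this is to show by the same pigeonhole mechanism that the number of bad vertices is itself at most $k\cdot |N(b) \cap A|$, so that the remaining ``good'' vertices — more than $k\cdot |A \setminus N(b)|$ of them — still supply enough candidate edges into $A \setminus N(b)$ to complete the matching. Once the $k$ edges are selected, verifying that $\{c_1, b, c_2\} \cup \{b_i^*, a_i : 1 \le i \le k\}$ induces $P_3 + kP_2$ is a routine check using (a)--(c) together with the careful choice of the $a_i$'s inside the sparse set $A \setminus N(b)$, whence the contradiction with $P_3+kP_2$-freeness yields $|B \cap D'| \le k|A|$.
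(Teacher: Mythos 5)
There is a genuine gap, and it lies in the step you yourself flag as the ``main obstacle''. From $|B\cap D'|>k|A|$, pigeonhole only gives you a vertex $a^*\in A$ with many neighbours in $B\cap D'$ --- that is a star, not $k$ disjoint edges, and no amount of iterating the count on a ``residual bipartite graph'' fixes this: since $|A|=2k+1$ is a constant, nothing prevents \emph{every} vertex of $B\cap D'$ from having its entire $A$-neighbourhood equal to a single vertex $a^*$, in which case the bipartite graph between $B\cap D'$ and $A$ has matching number one no matter how large $B\cap D'$ is. Worse, $b$ may be complete to $A$, so $A\setminus N(b)$ can be empty and your intended edges $(b_i^*,a_i)$ need not exist at all; and your proposed remedy, bounding the number of ``bad'' vertices by $k\cdot|N(b)\cap A|$, has no justification --- $P_3+kP_2$-freeness does not limit how many $B$-vertices share one $A$-neighbour (a large star plus the rest of the graph creates no forbidden induced subgraph by itself). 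Note also that you actually need an \emph{induced} matching whose $A$-endpoints are pairwise non-adjacent and non-adjacent to the other $b_j^*$'s, which is strictly harder still and cannot follow from cardinality alone inside a set of size $2k+1$.

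The missing idea is that this claim sits under the standing assumption (stated just before the claim) that $G$ is a \no-instance for \contracstd{}, and the paper's proof uses it essentially. One partitions $k|A|$ vertices of $(B\cap D')\setminus\{b\}$ into $k$ blocks of size $|A|$; if some block had no vertex $c_i\in C$ with $N(c_i)\cap D$ contained in that block, then replacing the block by $A$ would yield a minimum semitotal dominating set containing a friendly triple (as $A$ dominates $A\cup B$, every vertex is within distance two of $A$, and $G[A]$ contains a $P_3$), contradicting Theorem~\ref{thm:friendlytriple}(i). The resulting vertices $c_1,\ldots,c_k\in C$, each paired with a neighbour from its own block, supply the $kP_2$ part with all non-adjacencies automatic: $C$ is independent, the $D$-neighbours of each $c_i$ are confined to its block, $x,y$ are private to $b$, and the vertices of $D'$ are isolated in $G[D]$. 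Your $P_3$ choice $c_1\,b\,c_2$ and facts (a)--(c) match the paper, but by routing the $P_2$'s through $A$ instead of through such ``block-private'' $C$-vertices, and by never invoking the \no-instance hypothesis, the argument cannot be completed; any correct proof must use that hypothesis (or something equivalent) at this point.
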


\begin{claimproof}
Assume that there exists a vertex $b\in B\cap D'$ which has at least two private neighbours in $C$, say $x$ and $y$. Suppose for a contradiction that there are at least $k|A|$ further vertices in $B\cap D'$ besides $b$, say $b_1,\ldots,b_{k|A|}$. For every $i\in[k]$ there has to be a vertex $c_i\in C$ such that $N(c_i)\cap D\subseteq \{b_{(i-1)|A|+1},\ldots,b_{i|A|}\}$: indeed, if for some $i \in [k]$, no such vertex in $C$ exists then $(D\setminus \{b_{(i-1)|A|+1},\ldots,b_{i|A|}\})\cup A$ is a minimum semitotal dominating set containing a friendly triple (note indeed that $A$ dominates all of $A\cup B$ and every vertex is within distance at most two of a vertex in $A$), a contradiction to Theorem~\ref{thm:friendlytriple}. Thus assume, without loss of generality, that $b_{i|A|}$ is adjacent to $c_i$ for every $i\in[k]$. Then the vertices $b,x,y,c_1,\ldots,c_k,b_{|A|},b_{2|A|},\ldots,b_{k|A|}$ induce a $P_3+kP_2$, a contradiction.
\end{claimproof}

\begin{claim}\label{SharedResponsibilitiesAreDistributed}
Let $D$ be a minimum semitotal dominating set of $G$. If there exists a vertex $c\in C$ such that $\vert N(c)\cap D'\vert\geq 2$ then $c$ is adjacent to all vertices in $B\cap D'$ except for at most $k|A|-1$.
\end{claim}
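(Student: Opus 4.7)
The plan is to mirror the strategy used in the proof of Claim~\ref{OnlyOnePNInC}: assume for contradiction that there are many vertices of $B\cap D'$ missing the edge to $c$, partition them into $|A|$-sized blocks, find one ``private'' witness in $C$ per block by a swap-with-$A$ argument, and then exhibit a forbidden induced $P_3+kP_2$.

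More precisely, let $c\in C$ with $|N(c)\cap D'|\geq 2$, and pick two distinct vertices $b,b'\in N(c)\cap D'$. Suppose toward a contradiction that $(B\cap D')\setminus N(c)$ contains $k|A|$ distinct vertices $b_1,\ldots,b_{k|A|}$, and split them into the $k$ consecutive blocks $B_i=\{b_{(i-1)|A|+1},\ldots,b_{i|A|}\}$ for $i\in[k]$. For each $i\in[k]$, I claim there must exist $c_i\in C$ with $N(c_i)\cap D\subseteq B_i$: otherwise every vertex of $C$ dominated by some element of $B_i$ still has a dominator in $D\setminus B_i$, and one verifies (exactly as in Claim~\ref{OnlyOnePNInC}) that $(D\setminus B_i)\cup A$ is a semitotal dominating set of size at most $|D|$ which contains a friendly triple (the $P_3$ inside $A$), contradicting Theorem~\ref{thm:friendlytriple}(i) and the assumption that $G$ is a no-instance. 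Since $N(c_i)\cap D\neq\varnothing$, we may pick (after relabelling within $B_i$) a neighbour of $c_i$ in $B_i$, say $b_{i|A|}$.

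The final step is to verify that $S:=\{b,c,b'\}\cup\{c_i,b_{i|A|}:i\in[k]\}$ induces a $P_3+kP_2$ in $G$, contradicting the $P_3+kP_2$-freeness of $G$. The three vertices $b,c,b'$ clearly form an induced $P_3$ (since $bc,b'c\in E(G)$ while $b,b'\in D'$ are in an independent set), and each pair $\{c_i,b_{i|A|}\}$ is an edge. All remaining non-edges are immediate: pairs inside $C$ are non-edges as $C$ is independent; pairs inside $D'$ are non-edges by definition of $D'$; a cross edge $c_ib_{j|A|}$ with $i\neq j$ would contradict $N(c_i)\cap D\subseteq B_i$; a cross edge $c_ib$ or $c_ib'$ would contradict the same inclusion (as $b,b'\notin B_i$); and $cb_{i|A|}\notin E(G)$ precisely because each $b_{i|A|}$ was chosen outside $N(c)$.

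The main subtlety, and the only place where care is needed, is the replacement argument establishing the existence of each $c_i$ — specifically checking that swapping $B_i$ for $A$ still yields a semitotal dominating set (not just a dominating set). This is the same witness-by-$A$ verification used in Claim~\ref{OnlyOnePNInC}, relying on $|A|\geq 3$ with $A$ containing an induced $P_3$ so that every vertex of $A\cup B\cup C$ lies within distance two of some vertex of $A$; once this is in hand, the rest of the proof is the routine induced-subgraph check above.
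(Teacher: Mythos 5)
Your proposal is correct and follows essentially the same route as the paper's proof: assume $k|A|$ vertices of $B\cap D'$ are non-adjacent to $c$, split them into $k$ blocks of size $|A|$, obtain for each block a vertex $c_i\in C$ with $N(c_i)\cap D$ inside that block via the swap-with-$A$ argument from Claim~\ref{OnlyOnePNInC}, and then exhibit the induced $P_3+kP_2$ on $c$, its two $D'$-neighbours, and the pairs $c_i,b_{i|A|}$. Your induced-subgraph verification is even slightly more explicit than the paper's, which states the contradiction without spelling out the non-edges.
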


\begin{claimproof}
Assume that $c\in C$ has at least two neighbours $x,y\in B\cap D'$. Suppose for a contradiction that there are at least $k\vert A\vert$ vertices in $B\cap D'$ which are not adjacent to $c$, say $b_1,\ldots,b_{k|A|}$. As shown in the proof of Claim~\ref{OnlyOnePNInC}, there then has to be for every $i \in [k]$ a vertex $c_i \in C$ such that $N(c_i)\cap D\subseteq \{b_{(i-1)|A|+1},\ldots,b_{i|A|}\}$. Assume, without loss of generality, that $b_{i|A|}$ is adjacent to $c_i$ for every $i\in[k]$. Then the vertices $c,x,y,c_1,\ldots,c_k,b_{|A|},b_{2|A|},\ldots,b_{k|A|}$ induce a $P_3+kP_2$, a contradiction.
\end{claimproof}

\begin{claim}\label{FewVerticesInCWithoutPN}
Let $D$ be a minimum semitotal dominating set of $G$. If there are $|A|$ vertices in $B\cap D'$ which do not have a private neighbour in $C$ then $|B\cap D'|\leq (k+1)|A|-1$.
\end{claim}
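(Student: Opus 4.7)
The plan is to argue by contradiction. Suppose $|B\cap D'|\geq (k+1)|A|$, let $B_0\subseteq B\cap D'$ be a set of $|A|$ vertices none of which has a private neighbour in $C$, and set $B_1=(B\cap D')\setminus B_0$, so that $|B_1|\geq k|A|$. The candidate set to yield a contradiction is $D^*=(D\setminus B_0)\cup A$. Since $B_0\cap A=\varnothing$ and $|B_0|=|A|$, one immediately gets $|D^*|\leq |D|=\gamma_{t2}(G)$, so it will suffice to show that $D^*$ is a semitotal dominating set of $G$ containing a friendly triple, as this would contradict the assumption that $G$ is a \no-instance for \contracstd{} via Theorem~\ref{thm:friendlytriple}(i).

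The first step is to verify that $D^*$ dominates $G$. Vertices of $A\cup B$ are dominated by $A\subseteq D^*$. For $c\in C$, I will argue that $c$ must have a neighbour in $D\setminus B_0$: otherwise every $D$-neighbour of $c$ would lie in $B_0$, and since no vertex of $B_0$ has a private neighbour in $C$, we would have $|N(c)\cap D|\geq 2$, giving $c$ at least two neighbours in $B\cap D'$. Claim~\ref{SharedResponsibilitiesAreDistributed} would then force $c$ to be non-adjacent to at most $k|A|-1$ vertices of $B\cap D'$, so because $|B_1|\geq k|A|$ the vertex $c$ would necessarily have a neighbour in $B_1\subseteq D\setminus B_0$, a contradiction to the assumption that all $D$-neighbours of $c$ lie in $B_0$. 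This step is the main obstacle and the only place where the bound $(k+1)|A|$ is critically used.

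Next I will verify semitotality by producing a witness in $A\subseteq D^*$ for every vertex of $D^*$. Since $A$ induces $P_3+(k-1)P_2$, every vertex of $A$ has a neighbour in $A\setminus\{a\}$. For $v\in D\setminus B_0$, if $v\in A\cup B$ then $v$ is adjacent to some vertex in $A$, while if $v\in C$ then by the definition of $C$ the vertex $v$ lies at distance two from $A$; in either case $v$ admits a witness in $A$. Finally, the $P_3$-component $xyz$ of $G[A]$ is contained in $D^*$ and forms a friendly triple because $xy\in E(G)$ and $d(y,z)=1\leq 2$. Altogether, $D^*$ is a minimum semitotal dominating set of $G$ containing a friendly triple, yielding the desired contradiction and hence the bound $|B\cap D'|\leq (k+1)|A|-1$.
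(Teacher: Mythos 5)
Your proof is correct and follows essentially the same route as the paper: both replace the $|A|$ vertices of $B\cap D'$ without private neighbours in $C$ by $A$, invoke Claim~\ref{SharedResponsibilitiesAreDistributed} to guarantee that every vertex of $C$ remains dominated, and obtain a contradiction with Theorem~\ref{thm:friendlytriple}(i) via the friendly triple given by the $P_3$-component of $G[A]$. The only (harmless) difference is that you apply Claim~\ref{SharedResponsibilitiesAreDistributed} just to those $c\in C$ whose $D$-neighbours all lie in $B_0$, whereas the paper applies it to every $C$-neighbour of the removed vertices to get the bound of at least $|A|+1$ neighbours in $B\cap D'$.
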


\begin{claimproof}
Assume that $\vert B\cap D'\vert\geq (k+1)\vert A\vert$. Suppose for a contradiction that there are $|A|$ vertices, say $b_1,\ldots,b_{|A|}\in B\cap D'$, which have no private neighbours in $C$. Then for every $i\in[\vert A\vert]$, any vertex $c\in N(b_i)\cap C$ has to be adjacent to at least two vertices in $B\cap D'$ and thus, by Claim~\ref{SharedResponsibilitiesAreDistributed}, $c$ has to be adjacent to at least $\vert A\vert+1$ vertices in $B\cap D'$. But then $(D\setminus \{b_1,\ldots,b_{|A|}\})\cup A$ is a minimum semitotal dominating set containing a friendly triple, a contradiction to Theorem~\ref{thm:friendlytriple}(i).
\end{claimproof}

\begin{claim}\label{BCapDIsSmall}
Let $D$ be a minimum semitotal dominating set of $G$. If there exists a vertex $v\in B\cap D'$ which has a private neighbour $c\in N(v)\cap C$ and a private neighbour $b\in N(v)$ such that $c$ is not adjacent to $b$ then $|B\cap D'|\leq (k+1)|A|$.
\end{claim}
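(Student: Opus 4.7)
Suppose for contradiction that $|B\cap D'|\geq (k+1)|A|+1$. By the contrapositives of Claims~\ref{OnlyOnePNInC} and~\ref{FewVerticesInCWithoutPN}, every vertex of $B\cap D'$ has at most one private neighbour in $C$, and at most $|A|-1$ of them have none. Setting
\[
T:=\{w\in B\cap D':w\text{ has a (necessarily unique) private neighbour }c_w\in C\},
\]
we get $|T|\geq (k+1)|A|+1-(|A|-1)=k|A|+2$; in particular $v\in T$ with $c_v=c$ and $|T\setminus\{v\}|\geq k|A|+1$.

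The natural target is to exhibit an induced $P_3+kP_2$ by taking $\{b,v,c\}$ as the $P_3$ and $k$ of the pairs $(w,c_w)$, $w\in T\setminus\{v\}$, as the $P_2$'s. The $P_3$ is induced by hypothesis ($b\sim v\sim c$ and $b\not\sim c$), and a straightforward check shows that every potential edge between different parts of this configuration is forbidden automatically, \emph{except} possibly $b\,c_w$: indeed, $v\not\sim w$ and $w\not\sim w'$ because $D'$ is independent; $v\not\sim c_w$ and $c\not\sim w$ because $c_w$ (resp.\ $c$) is a private neighbour of $w\neq v$ (resp.\ $v\neq w$); $c\not\sim c_w$ and $c_w\not\sim c_{w'}$ because $C$ is independent; and $b\not\sim w$ because $b$ is a private neighbour of $v$ and $w\in D$. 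It therefore suffices to select $k$ indices $i$ with $c_{w_i}\not\sim b$.

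We split on the location of $b$. If $b\in C$, then $v$ admits two private neighbours in $C$, namely $b$ and $c$, so Claim~\ref{OnlyOnePNInC} yields $|B\cap D'|\leq k|A|$, a contradiction. If $b\in A$, then since every vertex of $C$ lies at distance exactly two from $A$, we have $b\not\sim c_w$ for all $w\in T$, and any $k$ elements of $T\setminus\{v\}$ complete the construction. The remaining case $b\in B$ is the main obstacle: here $b$ may be adjacent to many of the $c_w$. In this case we follow the pigeonhole strategy of Claim~\ref{OnlyOnePNInC}: partition $k|A|$ arbitrary vertices of $T\setminus\{v\}$ into groups $B_1,\ldots,B_k$ of size $|A|$ and, for each $i\in[k]$, attempt to find $c^*_i\in C$ with $\emptyset\neq N(c^*_i)\cap D\subseteq B_i$ and $c^*_i\not\sim b$; if all such $c^*_i$ exist, then $\{b,v,c\}\cup\bigcup_{i\in[k]}\{c^*_i,b^*_i\}$ (with $b^*_i\in N(c^*_i)\cap B_i$) induces a $P_3+kP_2$, contradiction. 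If for some $i$ no candidate $c^*_i$ exists, then we modify $D$ accordingly---essentially by replacing $B_i$ with $A$, exploiting the fact that every offending $c^*$ is forced to be adjacent to $b$---to obtain a minimum semitotal dominating set containing the $P_3$ of $A$, and hence a friendly triple, contradicting Theorem~\ref{thm:friendlytriple}(i) since $G$ is a \no-instance. The subtle point, and the main technical difficulty, is to design this swap so that it simultaneously preserves the size of $D$ and the semitotal-domination property; the hypothesis $b\not\sim c$ plays the crucial role here in controlling how $c$ must be re-dominated after the modification.
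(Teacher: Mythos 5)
Your setup (ruling out two private neighbours in $C$ via Claim~\ref{OnlyOnePNInC}, bounding the number of vertices without a private neighbour in $C$ via Claim~\ref{FewVerticesInCWithoutPN}, and aiming for an induced $P_3+kP_2$ built from $\{b,v,c\}$ plus pairs $(w,c_w)$) matches the paper, and your non-adjacency checks and the cases $b\in C$, $b\in A$ are fine. But the case $b\in B$ with many $c_w$ adjacent to $b$ -- which you yourself flag as ``the main technical difficulty'' -- is exactly the heart of the claim, and your sketch for it does not go through as stated. If some group $B_i$ admits no candidate $c^*_i$, then every $c^*\in C$ with $\emptyset\neq N(c^*)\cap D\subseteq B_i$ is adjacent to $b$; replacing $B_i$ (of size $|A|$) by $A$ then destroys the domination of all these $c^*$ unless you also add $b$, in which case the new set has size $\gamma_{t2}(G)+1$ and yields no contradiction with Theorem~\ref{thm:friendlytriple}(i). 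So the swap, as described, cannot simultaneously preserve size and domination, and no mechanism for repairing this is given. Moreover, your closing remark that the hypothesis $b\not\sim c$ is what controls the re-domination of $c$ is misplaced: $v$ is never removed, so $c$ stays dominated by $v$; the only role of $b\not\sim c$ is to make $\{b,v,c\}$ an induced $P_3$ in the first case.

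The paper resolves this case differently. It pigeonholes directly on the adjacency of the private neighbours to $b$: among $|A|+k$ vertices $b_1,\ldots,b_{|A|+k}\in B\cap D'$ with private neighbours $c_1,\ldots,c_{|A|+k}\in C$, either $k$ of the $c_i$ are nonadjacent to $b$ (your first case), or $|A|+1$ of them are adjacent to $b$. In the latter case it removes those $|A|+1$ vertices $b_1,\ldots,b_{|A|+1}$ and adds $A\cup\{b\}$, which preserves the size, and -- this is the ingredient missing from your argument -- it invokes Claim~\ref{SharedResponsibilitiesAreDistributed} to guarantee domination: any $C$-vertex adjacent to one of the removed $b_i$ but not its private neighbour has at least two neighbours in $D'$, hence is adjacent to all but at most $k|A|-1$ vertices of $B\cap D'$, and since $|B\cap D'|\geq (k+1)|A|+1$ it keeps a dominator outside the removed set; the private neighbours $c_1,\ldots,c_{|A|+1}$ are dominated by $b$, and $A\cup B$ is dominated by $A$. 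The resulting set contains the $P_3$ of $A$, contradicting Theorem~\ref{thm:friendlytriple}(i). Without an argument of this kind (in particular without using Claim~\ref{SharedResponsibilitiesAreDistributed} or an equivalent), your proof of the claim is incomplete.
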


\begin{claimproof}
If there exists a vertex in $B\cap D'$ with two private neighbours in $C$ then we conclude by Claim~\ref{OnlyOnePNInC}. Thus, we may assume that no vertex in $B\cap D'$ has more than one private neighbour in $C$. Assume that there exists a vertex $v\in B\cap D'$ which has exactly one private neighbour $c\in C$ and another private neighbour $b\in B \cup A$ such that $b$ and $c$ are not adjacent. Suppose for a contradiction that $|B\cap D'|\geq (k+1)|A|+1$. It follows from Claim~\ref{FewVerticesInCWithoutPN} that there are at most $|A|-1$ vertices in $B\cap D'$ which do not have a private neighbour in $C$. Hence, besides $v$, there are at least $k\vert A\vert+1$ further vertices in $B\cap D'$ which do have private neighbours in $C$. Let $b_1,\ldots,b_{|A|+k}\in B\cap D'$ be $\vert A\vert+k$ of them and let $c_1,\ldots,c_{|A|+k}\in C$ be their private neighbours, respectively. By the pigeonhole principle, there are either $k$ indices $i\in[\vert A\vert+k]$ such that $c_i$ is nonadjacent to $b$ or $|A|+1$ indices $i\in[\vert A\vert+k]$ such that $c_i$ is adjacent to $b$. In the first case, assume, without loss of generality, that $c_1,\ldots,c_k$ are nonadjacent to $b$. Then $c,v,b,b_1,\ldots,b_k,c_1,\ldots,c_k $ induce a $P_3+kP_2$, a contradiction. In the second case, assume, without loss of generality,  that $b$ is complete to $\{c_1,\ldots,c_{\vert A\vert+1}\}$. Together with Claim~\ref{SharedResponsibilitiesAreDistributed}, we then conclude that every vertex in $C$ which is adjacent to a vertex in $\{b_1,\ldots,b_{\vert A\vert+1}\}$ is adjacent to a vertex in $((B\cap D)\setminus \{b_1,\ldots,b_{\vert A\vert+1}\})\cup \{b\}$ as well, and so $(D\setminus \{b_1,\ldots,b_{|A|+1}\})\cup \{b\}\cup A$ yields a minimum semitotal dominating set containing a friendly triple, a contradiction to Theorem~\ref{thm:friendlytriple}(i).
\end{claimproof}

\begin{claim}\label{TwoneighboursInD'}
Let $D$ be a minimum semitotal dominating set of $G$. If there exists a vertex $v \in V(G) \setminus D$ such that $v$ has exactly two neighbours in $D'$ then $|B \cap D'| \leq (k+1)(|A|+1)-1$.
\end{claim}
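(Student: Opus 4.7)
Assume toward contradiction that $|B\cap D'|\geq (k+1)(|A|+1)$, and let $d_1,d_2\in D'$ be the two neighbours of $v$. Since $d_1,d_2$ lie in distinct components of $G[D]$ they are non-adjacent, so $d_1vd_2$ induces a $P_3$. I would first apply the contrapositives of Claims~\ref{OnlyOnePNInC}, \ref{FewVerticesInCWithoutPN} and \ref{BCapDIsSmall} to isolate at least $m:=(k+1)(|A|+1)-(|A|-1)=k|A|+k+2$ vertices $b_1,\ldots,b_m\in B\cap D'$, each with a unique private neighbour $c_i\in C$ and with the property that any further private neighbour of $b_i$ is adjacent to $c_i$. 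The edges $b_ic_i$ form a pool of pairwise anticomplete induced $P_2$'s (the $b_i$'s are pairwise non-adjacent as members of singleton components of $G[D]$; $C$ is independent; and $b_ic_j$ for $i\neq j$ is ruled out by privacy), so $k$ of these edges together with $d_1vd_2$ will induce a forbidden $P_3+kP_2$ provided none of the chosen $b_i,c_i$ is adjacent to a vertex of $\{d_1,v,d_2\}$.

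I would then case on the location of $v$. If $v\in C$, then since $C$ is independent and not adjacent to $A$, the neighbourhood of $v$ lies in $B$ and $\{d_1,d_2\}$ are its only neighbours in $B\cap D'$; Claim~\ref{SharedResponsibilitiesAreDistributed} then yields $|B\cap D'|\leq k|A|+1$ directly, a contradiction. If $v\in A$, no $c_i$ is adjacent to $v$ because $A$ is anticomplete to $C$; after excluding the at most two indices $i$ with $b_i\in\{d_1,d_2\}$, the remaining $m-2\geq k$ indices furnish the forbidden $P_3+kP_2$.

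The main obstacle is the case $v\in B$, in which $v$ may be adjacent to many $c_i$'s. Set $T=\{i:c_ i \text{ adjacent to } v\}$. If $|T|\leq m-k-2$ then, after excluding $T$ and the at most two indices with $b_i\in\{d_1,d_2\}$, at least $k$ indices remain and the same exclusion-based argument produces the forbidden $P_3+kP_2$. Otherwise $|T|\geq m-k-1=k|A|+1\geq |A|+1$, and I would switch strategies: picking $|A|+1$ indices $i_1,\ldots,i_{|A|+1}\in T$, define $D''=(D\setminus\{b_{i_1},\ldots,b_{i_{|A|+1}}\})\cup A\cup\{v\}$, of size at most $|D|$. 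A routine (but slightly subtle) verification, relying on Claims~\ref{OnlyOnePNInC}, \ref{SharedResponsibilitiesAreDistributed} and \ref{BCapDIsSmall} to handle each type of possibly undominated vertex (each $c_{i_j}$ is dominated by $v$; private neighbours of the removed $b_{i_j}$'s in $B\cup A$ are dominated by $A$; any $u\in C$ whose only $D$-neighbours lie among the removed $b_{i_j}$'s is either equal to some $c_{i_j}$ or has additional $D'$-neighbours outside the removed set), shows that $D''$ is a semitotal dominating set. Since $A\subseteq D''$ induces a $P_3+(k-1)P_2$, the $P_3$ in $A$ provides a friendly triple, and Theorem~\ref{thm:friendlytriple}(i) then contradicts $G$ being a \no-instance for \contracstd.
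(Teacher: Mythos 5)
Your proposal is correct and follows essentially the same route as the paper's proof: extract $k(|A|+1)+2$ vertices of $B\cap D'$ with private neighbours in $C$ via Claims~\ref{OnlyOnePNInC} and \ref{FewVerticesInCWithoutPN}, then either find $k$ edges $b_ic_i$ anticomplete to the induced $P_3$ $d_1vd_2$ (yielding a forbidden $P_3+kP_2$), or swap $|A|+1$ of the $b_i$'s for $A\cup\{v\}$ to obtain a minimum semitotal dominating set with a friendly triple, contradicting Theorem~\ref{thm:friendlytriple}(i). Your explicit case distinction on whether $v$ lies in $A$, $B$ or $C$ and the appeal to Claims~\ref{SharedResponsibilitiesAreDistributed} and \ref{BCapDIsSmall} are harmless elaborations of details the paper leaves implicit, not a different argument.
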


\begin{claimproof}
Assume that there exists a vertex $v \in V(G) \setminus D$ such that $v$ has exactly two neighbours in $D'$, say $b$ and $b'$. Suppose to the contrary that $|B \cap D'| \geq (k+1)(|A|+1)$. Then by Claim~\ref{OnlyOnePNInC}, every vertex in $B\cap D'$ has at most one private neighbour in $C$; and Claim~\ref{FewVerticesInCWithoutPN} ensures that there are at least $k(|A|+1)+2$ vertices $b_1,\ldots,b_{k(|A|+1)+2}\in B\cap D'$ which do have a private neighbour in $C$, say $c_1,\ldots,c_{k(|A|+1)+2}\in C$ respectively. Assume without loss of generality that $b$ and $b'$ are distinct from $b_1,\ldots,b_{k(|A|+1)}$. Then there exist at least $k|A|+1$ indices $i \in [k(|A|+1)]$ such that $c_i$ is adjacent to $v$: indeed, if there are at most $k|A|$ such indices then there are at least $k$ indices $i \in [k(|A|+1)]$ such that $c_i$ is nonadjacent to $v$, say indices 1 through $k$ without loss of generality. But then the vertices $b,v,b',b_1,\ldots,b_k,c_1,\ldots,c_k$ induce a $P_3+kP_2$, a contradiction. Thus assume, without loss of generality, that $c_i$ is adjacent to $v$ for every $i\in [k|A|+1]$. Then $(D \setminus \{b_1,\ldots,b_{|A|+1}\}) \cup \{v\} \cup A$ is a minimum semitotal dominating set of $G$ containing a friendly triple, a contradiction to Theorem~\ref{thm:friendlytriple}(i).
\end{claimproof}

\begin{claim}\label{MostVerticesInC}
There exists a minimum semitotal dominating set $D$ of $G$ with a maximum number of size-two components amongst all minimum semitotal dominating sets of $G$, such that the number of size-one components outside of $C$ is at most $(k+1)(\vert A\vert +2)+2\vert A\vert$.
\end{claim}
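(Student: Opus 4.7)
The plan is to take $D$ to be any minimum semitotal dominating set of $G$ that maximises the number of size-two components among minimum semitotal dominating sets of $G$, and to show that the required bound holds for this $D$ (possibly after an exchange that preserves both minimality and the size-two component count). Since the size-one components outside $C$ are exactly the vertices of $D'\cap(A\cup B)$, and since $D'\cap A\subseteq A$ trivially gives $|D'\cap A|\le|A|$, the task reduces to proving $|B\cap D'|\le (k+1)(|A|+2)+|A|$.

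The first move is to apply Claims \ref{OnlyOnePNInC}, \ref{FewVerticesInCWithoutPN}, \ref{BCapDIsSmall} and \ref{TwoneighboursInD'} to $D$: if the hypothesis of any of them is satisfied, we immediately obtain $|B\cap D'|\le(k+1)(|A|+1)$, which is comfortably within the target. So I may assume that all four hypotheses fail, which yields four convenient structural properties: (i) every vertex of $B\cap D'$ has at most one private neighbour in $C$; (ii) at most $|A|-1$ of the vertices in $B\cap D'$ have no private neighbour in $C$; (iii) whenever $v\in B\cap D'$ has a private neighbour $c\in C$ and another private neighbour $b\in A\cup B$, then $bc\in E(G)$; and (iv) no vertex of $V(G)\setminus D$ has two or more neighbours in $D'$.

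Under (i)--(iv), the vertices $v\in B\cap D'$ with a unique $C$-private neighbour $c_v$ form a set $S$ of size at least $|B\cap D'|-(|A|-1)$, and (iv) makes the assignment $v\mapsto c_v$ injective. Supposing for contradiction that $|B\cap D'|$ exceeds the target, I would pick a large collection of such pairs $(v,c_v)$ and exploit the assumption $\mathcal{R}=\varnothing$: since none of the $c_v$ is regular, for each of them either $N(c_v)$ fails to be a clique, or no family of $k$ further vertices in $C$ can be stretched out as required by the definition of a regular vertex. Combined with the $P_3+kP_2$-freeness of $G$, this should force enough shared adjacencies among the $v$'s and vertices of $A\cup B$ to produce a local exchange that either creates a new edge inside $G[D]$ (strictly increasing the size-two count, contradicting maximality) or removes a vertex from $D$ altogether (contradicting minimality).

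The main obstacle will be handling (i)--(iv) together with the non-regularity hypothesis inside a single explicit swap, in the same delicate case-analysis style as Claims \ref{OnlyOnePNInC}--\ref{TwoneighboursInD'}; the slack of roughly $|A|+k+1$ between the raw bound $(k+1)(|A|+1)$ and the target $(k+1)(|A|+2)+|A|$ is precisely what lets the argument absorb the few exceptional vertices that slip past the combinatorial bookkeeping.
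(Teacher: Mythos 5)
Your reductions at the start are fine: bounding $|D'\cap A|$ by $|A|$ and splitting off the cases where the hypotheses of Claims~\ref{OnlyOnePNInC}, \ref{FewVerticesInCWithoutPN}, \ref{BCapDIsSmall} or \ref{TwoneighboursInD'} hold does bring you safely under the target, and is in the spirit of the actual proof. But the heart of the claim --- what happens when all those hypotheses fail and $|B\cap D'|$ is nevertheless large --- is exactly the part you leave as ``this should force enough shared adjacencies \ldots to produce a local exchange''; that is not an argument, and it is where all the work lies. Two concrete things are missing. First, the paper does not work with an arbitrary maximiser of the number of size-two components: it additionally chooses $D$, among those maximisers, with $|B\cap D'|$ \emph{minimum}. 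This secondary extremal condition is what makes the key swap bite: for a vertex $b_j\in B\cap D'$ with unique private neighbour $c_j\in C$, the set $S=(D\setminus\{b_j\})\cup\{c_j\}$ is a dominating set of size $\gamma_{t2}(G)$ (here Claim~\ref{BCapDIsSmall} is used), and it cannot be a semitotal dominating set precisely because it would have the same number of size-two components but strictly smaller $|B\cap D'|$. Your proposal never sets up a swap whose failure you can certify; without the refined choice of $D$, replacing $b_j$ by $c_j$ could simply yield another valid optimum and no contradiction arises.

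Second, the route to the contradiction with $\mathcal{R}=\varnothing$ is not a single exchange but a case analysis on \emph{why} $S$ fails to be semitotal: either some witness $w$ survives only through $b_j$, which via a common neighbour lands you in the hypothesis of Claim~\ref{TwoneighboursInD'} (note, by the way, that the negation of that claim's hypothesis is ``no vertex outside $D$ has \emph{exactly} two neighbours in $D'$'', not ``two or more'' as your item (iv) states), or $c_j$ has no vertex of $S$ within distance two, in which case one shows successively that $N(c_i)$ is a clique for each of the selected $c_i$ (using a $P_3+kP_2$ argument plus an exchange with $A$ contradicting Theorem~\ref{thm:friendlytriple}(i)), that no two $c_i$ are at distance two or three (exchanges increasing the number of size-two components, again using Claims~\ref{BCapDIsSmall} and \ref{TwoneighboursInD'}), and hence that the $c_i$ are pairwise at distance at least four --- making them regular and contradicting $\mathcal{R}=\varnothing$. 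Your sketch names the right ingredients (non-regularity, $P_3+kP_2$-freeness, maximality/minimality exchanges) but does not carry out any of these steps, and the quantitative bookkeeping (needing on the order of $(k+1)(|A|+2)$ pairs $(b_i,c_i)$ so that the pigeonhole and exchange arguments go through) is also absent. As it stands the proposal is a plan, not a proof, of the critical case.
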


\begin{claimproof}
Let $D$ be a minimum semitotal dominating set of $G$ with the maximum number of size-two components amongst all minimum semitotal dominating sets of $G$, such that $\vert B\cap D'\vert$ has minimum size amongst all minimum semitotal dominating sets with the maximum number of size-two components. Suppose for a contradiction that $\vert B\cap D'\vert\geq (k+1)(\vert A\vert +2)+\vert A\vert+1$. Then by Claim~\ref{OnlyOnePNInC}, every vertex in $B\cap D'$ has at most one private neighbour in $C$; and Claim~\ref{FewVerticesInCWithoutPN} ensures that there are at least $(k+1)(|A|+2)+2$ vertices $b_1,\ldots,b_{(k+1)(|A|+2)+2}\in B\cap D'$ which do have a private neighbour in $C$, say $c_1,\ldots,c_{(k+1)(|A|+2)+2}\in C$ respectively. Now observe that the set $S=(D\setminus \{b_{(k+1)(|A|+2)+2}\})\cup \{c_{(k+1)(|A|+2)+2}\}$ is a dominating set of $G$ of size $\gamma_{t2}(G)$ for otherwise $b_{(k+1)(|A|+2)+2}$ has a private neighbour $p\in N(b_{(k+1)(|A|+2)+2})$ which is not adjacent to $c_{(k+1)(|A|+2)+2}$, a contradiction to Claim~\ref{BCapDIsSmall}. However, $S$ cannot be a semitotal dominating set of $G$ as it would contradict the fact that $\vert B\cap D'\vert$ has minimum size amongst all minimum dominating sets with the maximum number of size-two components. Thus in $S$, either $c_{(k+1)(|A|+2)+2}$ has no witness or there exists a vertex $w\in D'$ which is at distance two from $b_{(k+1)(|A|+2)+2}$ but at distance at least three from $c_{(k+1)(|A|+2)+2}$ and every other vertex in $D'$. In the latter case, let $v$ be a common neighbour of $b_{(k+1)(|A|+2)+2}$ and $w$. Then, by assumption, $v$ has exactly two neighbours in $D'$, namely $b_{(k+1)(|A|+2)+2}$ and $w$, a contradiction to Claim~\ref{TwoneighboursInD'} as $\vert B\cap D'\vert\geq (k+1)(\vert A\vert +2)+\vert A\vert+1$ by assumption. Thus, assume that $c_{(k+1)(|A|+2)+2}$ does not have any vertices in $S$ at distance at most two. Now suppose that $N(c_{(k+1)(|A|+2)+2})$ contains two nonadjacent vertices, say $x$ and $y$. Then, since for any $i\in[|A|+2]$, the set $\{x,c_{(k+1)(|A|+2)+2},y,b_{(i-1)k+1},c_{(i-1)k+1},\ldots,b_{ik},c_{ik}\}$ cannot induce a $P_3+kP_2$, it follows that $x$ or $y$ has to be adjacent to $c_{(i-1)k+j}$ for some $j\in [k]$, say for every $i \in [|A|+2]$, $c_{ik}$ is adjacent to $x$ or $y$ without loss of generality. But then $(D\setminus \{b_{ik}~|~i\in[|A|+2]\})\cup \{x,y\}\cup A$ is a minimum semitotal dominating set containing a friendly triple, a contradiction to Theorem~\ref{thm:friendlytriple}(i). It follows that $N(c_{(k+1)(|A|+2)+2})$ is a clique and a similar reasoning shows that in fact $N(c_i)$ is a clique for every $i \in [(k+1)(|A|+2)+2]$. Now if there exist two indices $i,j\in [(k+1)(|A|+2)+2]$ such that $c_i$ and $c_j$ are at distance two then let $b$ be a common neighbour of $c_i$ and $c_j$. Then by Claim~\ref{BCapDIsSmall}, every private neighbour of $b_i$ besides $c_i$ has to be adjacent to $c_i$ and thus to $b$ as well; but then $(D\setminus \{b_i\})\cup \{b\}$ is a minimum semitotal dominating set containing more size-two components than $D$, a contradiction. If there exist two indices $i,j\in [(k+1)(|A|+2)+2]$ such that $c_i$ and $c_j$ are at distance three, then there are two adjacent vertices $v_i\in N(c_i)$ and $v_j\in N(c_j)$. Then for any $p \in \{i,j\}$, any private neighbour of $b_p$ besides $c_p$ has to be adjacent to $c_p$ by Claim~\ref{BCapDIsSmall} and thus to $v_p$; and any common neighbour of $b_i$ and $b_j$ must have at least one other neighbour in $D'$ by Claim~\ref{TwoneighboursInD'}. It follows that $S=(D \setminus \{b_i,b_j\}) \cup \{v_i,v_j\}$ is a dominating set of $G$ of size $\gamma_{t2}$. We further claim that $S$ is a semitotal dominating set of $G$. Indeed, suppose to the contrary that there exists a vertex $w \in D'$ such that $w$ has no witness in $S$. Then $w$ must be at distance two from $b_i$ or $b_j$, say $d_G(w,b_i)= 2$ without loss of generality. Let $v$ be a common neighbour of $w$ and $b_i$. Then there must exist at least two indices $p,q \in [(k+1)(|A|+2)+2] \setminus \{i\}$ such that $v$ is adjacent to $c_p$ and $c_q$ for otherwise $\{w,u,b_i\} \cup \{b_\ell,c_\ell~|~u \notin N(c_\ell)\}$ would contain an induced $P_3+kP_2$, a contradiction. But this implies in particular that $c_p$ and $c_q$ are at distance two, a contradiction by the previous case. Thus $S$ is a minimum semitotal dominating set of $G$; but $S$ contains strictly more size-two components than $D$, a contradiction. Thus for any $i,j \in [(k+1)(|A|+2)+2]$, $c_i$ and $c_j$ are at distance at least four and so for any $i \in [(k+1)(|A|+2)+2]$, $c_i$ is a regular vertex, a contradiction to our assumption. Thus $\vert B\cap D'\vert\leq (k+1)(\vert A\vert +2)+\vert A\vert$ and since $\vert D'\cap A\vert\leq\vert A\vert$, the claim follows.
\end{claimproof}

\begin{claim}\label{MostNeighbourhoodsAreCliques}
Let $D$ be a minimum semitotal dominating set of $G$ with a maximum number of size-two components amongst all minimum semitotal dominating set of $G$. If $S\subseteq C\cap D'$ is a subset of vertices which are pairwise at distance at least three and every vertex in $S$ has two nonadjacent neighbours then $\vert S\vert\leq k(k+1)-1$.
\end{claim}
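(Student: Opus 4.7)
The plan is to argue by contradiction: suppose $|S|\geq k(k+1)$ and construct an induced $P_3+kP_2$ in $G$, contradicting the assumption that $G$ is $P_3+kP_2$-free. For each $c\in S$, fix a pair of nonadjacent neighbours $x_c,y_c\in N(c)$; since $C$ is independent, we have $x_c,y_c\in B$. The distance-$\geq 3$ assumption on $S$ ensures that the closed neighbourhoods $N[c]$ for $c\in S$ are pairwise disjoint, and more importantly, for distinct $c,c'\in S$ no edge can connect $c$ (respectively $x_c,y_c$) to $c'$ (respectively nothing in $N[c']\setminus\set{x_{c'},y_{c'}}$ besides another vertex in $N(c')$). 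In particular, the only possible edges between the two ``local'' $P_3$s $x_ccy_c$ and $x_{c'}c'y_{c'}$ run between $\set{x_c,y_c}$ and $\set{x_{c'},y_{c'}}$.

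The target induced $P_3+kP_2$ will have the form $\set{x_{c_0},c_0,y_{c_0}}\cup\bigcup_{i=1}^{k}\set{v_i,c_i}$ for some $c_0,c_1,\ldots,c_k\in S$ pairwise distinct and $v_i\in\set{x_{c_i},y_{c_i}}$ satisfying (i) $v_i$ is non-adjacent to both $x_{c_0}$ and $y_{c_0}$, and (ii) the $v_i$'s are pairwise non-adjacent. All other required non-adjacencies (between $c_0$ and the $P_2$s, between the $c_i$'s, between $c_i$ and $v_j$ for $i\neq j$) follow automatically from the distance-$\geq 3$ property and $x_{c_0}\not\sim y_{c_0}$.

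The construction proceeds in two pigeonhole stages, which together motivate the bound $k(k+1)$. \textbf{Stage 1:} call $c'\in S\setminus\set{c}$ \emph{compatible} with $c$ if at least one of $x_{c'},y_{c'}$ is non-adjacent to both $x_c$ and $y_c$. I claim there exists $c_0\in S$ with at least $k^2$ compatible partners. If not, every $c\in S$ has at least $|S|-1-(k^2-1)=k(k+1)-k^2=k$ \emph{incompatible} partners; picking any $c\in S$ together with $k$ of its incompatible partners yields at least $2k$ edges between $\set{x_c,y_c}$ and $\bigcup_i\set{x_{c'_i},y_{c'_i}}$, and then a direct case analysis exploits this dense bipartite structure to build an induced $P_3+kP_2$ inside $\set{x_c,c,y_c}\cup\bigcup_i\set{x_{c'_i},c'_i,y_{c'_i}}$ by using a vertex in $N(c_0)$ as the centre of a $P_3$ and extracting suitable $P_2$s from the partners. \textbf{Stage 2:} fix $c_0$ and a set $S_0\subseteq S\setminus\set{c_0}$ of $k^2$ compatible partners, and for each $c'\in S_0$ fix $v_{c'}\in\set{x_{c'},y_{c'}}$ non-adjacent to $\set{x_{c_0},y_{c_0}}$. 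Define the \emph{conflict graph} $H$ on $S_0$ by joining $c'\sim_H c''$ iff $v_{c'}\sim v_{c''}$; $P_3+kP_2$-freeness bounds the maximum degree of $H$ by $k-1$, since if $v_{c'}$ had $k$ neighbours $v_{c''_1},\ldots,v_{c''_k}$ in $H$ one could splice $\set{x_{c_0},c_0,y_{c_0}}$ with the pendant edges $v_{c''_j}c''_j$ (using $\set{v_{c'},c'}$ as one of the $P_2$s) to obtain an induced $P_3+kP_2$. Hence $H$ admits an independent set of size at least $k^2/k=k$, giving the desired $c_1,\ldots,c_k$, which completes the contradiction.

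The main obstacle is the explicit construction of the forbidden $P_3+kP_2$ in each stage: in both stages, one has to carefully choose which $c\in S$ to treat as ``$P_3$-centre'' versus ``$P_2$-contributor'', and which endpoint $v\in\set{x_c,y_c}$ to pick, while book-keeping exactly which edges can and cannot appear across different $N[c]$'s given the distance-$\geq 3$ constraint.
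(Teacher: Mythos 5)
There is a genuine gap, and it is structural rather than a missing detail: your entire plan is to contradict $P_3+kP_2$-freeness using only the vertices of $S$ and the chosen non-adjacent neighbour pairs $\{x_c,y_c\}$, and you never use the hypothesis that $D$ has the maximum number of size-two components among minimum semitotal dominating sets. That hypothesis is essential. Consider the configuration in which, for all distinct $c,c'\in S$, every vertex of $\{x_c,y_c\}$ is adjacent to every vertex of $\{x_{c'},y_{c'}\}$ (a cocktail-party pattern on the endpoints, with each $c$ pendant-attached to its own non-adjacent pair). This is consistent with everything you use: the $c$'s are pairwise at distance exactly three, $x_c\not\sim y_c$, and the only cross-gadget edges run between endpoint sets. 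Yet the union of these gadgets contains no induced $P_3+P_2$ at all (any edge outside a chosen induced $P_3$ is adjacent to it), so no induced $P_3+kP_2$ can be extracted no matter how large $|S|$ is. Concretely, in this configuration there are no ``compatible'' pairs, so your Stage 1 pigeonhole yields nothing, and the asserted but unwritten ``direct case analysis'' building a $P_3+kP_2$ from the dense bipartite structure cannot succeed; likewise the Stage 2 claim that the conflict graph has maximum degree at most $k-1$ is unsubstantiated (the chosen $v$'s may form a clique), and the proposed splice using $\{v_{c'},c'\}$ as a $P_2$ clashes with the $P_2$'s $v_{c''_j}c''_j$ since $v_{c'}$ is adjacent to all $v_{c''_j}$.

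The paper's proof takes a different route precisely to avoid this obstruction. It uses $P_3+kP_2$-freeness only for auxiliary facts (among any $k+1$ vertices of $S$ two are at distance exactly three, via $(k+1)P_3$; and any set $S_1\subseteq N(S)$ inducing exactly one edge with distinct private neighbours in $S$ has $|S_1|\leq k+1$, via $P_4+(|S_1|-2)P_2$). The contradiction itself comes from the extremal choice of $D$: an iterative greedy procedure produces two sets $S_{i-1},S_i\subseteq N(S)$, each inducing exactly one edge, with $|S_i|=|S_{i-1}|\geq 2$ (this is where $|S|\geq k(k+1)$ is consumed), such that every neighbour of every vertex of $N(S_{i-1}\cup S_i)\cap S$ is adjacent to $S_{i-1}\cup S_i$; swapping $N(S_{i-1}\cup S_i)\cap S$ for $S_{i-1}\cup S_i$ then gives a minimum semitotal dominating set with strictly more size-two components than $D$, contradicting the choice of $D$. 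If you want to repair your argument, you must bring this maximality (or some equivalent exchange argument on $D$) into play; a purely local forbidden-subgraph count on $S$ and its neighbours cannot prove the claim.
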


\begin{claimproof}
Assume that $S'=\{c_1,\ldots,c_{k+1}\}\subseteq C\cap D'$ is a set of $k+1$ vertices which are pairwise at distance at least three and for every $i\in[k+1]$ there are two nonadjacent vertices $b_i, b'_i\in N(c_i)$. If for every $i,j\in[k+1]$ the vertices $c_i$ and $c_j$ are at distance at least four then $b_1,b'_1,c_1,\ldots,b_{k+1},b'_{k+1},c_{k+1}$ induce a $(k+1)P_3$, a contradiction. Hence, there exist two indices $i,j\in[k+1]$ such that $c_i$ and $c_j$ are at distance exactly three. 

Suppose for a contradiction that there is a set $S\subseteq C\cap D'$ of at least $k(k+1)$ vertices which are pairwise at distance at least three and such that for every vertex $v\in S$ there are two nonadjacent vertices in $N(v)$. By the above remark, there exist two vertices at distance exactly three in $S$. Let $S_1\subseteq N(S)$ be a maximum subset of $N(S)$ such that $G[S_1]$ contains exactly one edge and no two vertices in $S_1$ share a common neighbour in $S$. Observe that $\vert N(S_1)\cap S\vert=\vert S_1\vert$ and that $S_1\cup (N(S_1)\cap S)$ induces a $P_4+(\vert S_1\vert-2)P_2$. In particular, $\vert S_1\vert\leq k+1$. We construct a sequence of sets of vertices according to the following procedure.

\begin{itemize}
    \item[1.] Initialize $i=1$. Set $C_1=N(S_1)\cap S$ and $B_1=N(C_1)$.
    \item[2.] Increase $i$ by one.
    \item[3.] Let $S_i\subset N(S)\setminus B_{i-1}$ be a maximum set of vertices such that $G[S_i]$ contains exactly one edge and no two vertices in $S_i$ share a common neighbour in $S$. Set $C_i=C_{i-1}\cup (N(S_i)\cap S)$ and $B_i=B_{i-1}\cup N(C_i)$.
    \item[4.] If $\vert S_i\vert=\vert S_{i-1}\vert$, stop the procedure. Otherwise return to step 2.
\end{itemize}

Consider the value of $i$ at the end of the procedure (note that $i\geq 2$). Observe that since for any $j \in [i-1] \setminus \{1\}$, $|S_j| < |S_{j-1}|$ and $|S_1| \leq k + 1$, it follows that for any $j \in [i-1]$, $|S_j| \leq k + 2 - j$. Let us show that $|S_i| \geq 2$. Since for any $j \in [i-1]$, $|S_j| \leq k + 1$, we have that $|S \setminus C_j| = |S| - \sum_{p = 1}^j |S_p| \geq (k+1)^2 - j(k+1)$. Thus if $i \leq k+1$ then for any $j \in [i-1]$, $|S \setminus C_j| \geq k+1$ which implies by the above that $|S_j| \geq 2$ for any $j \in [i]$. We now claim that $i$ cannot be larger than $k+1$. Indeed, if $i > k+1$ then for any $j \in [k+1] \setminus {1}$, $|S_j| < |S_{j-1}|$ with $|S_{k+1}| \geq 2$ as shown previously; but $|S_j| \leq k + 2 - j$ for any $j \in [i-1]$ which implies that $|S_{k+1}| \leq 1$, a contradiction. Thus $i\leq k+1$ and so $|S_i| \geq 2$.

Now observe that for any vertex $c\in N(S_i)\cap S$, every neighbour $v\in N(c)$ has to be adjacent to $S_{i-1}$ as otherwise the procedure would have output $S_{i-1}\cup \{v\}$ instead of $S_{i-1}$. Furthermore, for any vertex $c\in N(S_{i-1})\cap S$ every neighbour $v\in N(c)$ has to be adjacent to a vertex in $S_{i}$ as otherwise the procedure would have output $S_i\cup \{v\}$ instead of $S_{i-1}$ (recall that $\vert S_i\vert=\vert S_{i-1}\vert$). It follows that $T=(D\setminus (N(S_i\cup S_{i-1})\cap S))\cup (S_i\cup S_{i-1})$ is a minimum semitotal dominating set of $G$: indeed, any vertex which is dominated or witnessed by a vertex in $N(S_i)\cap S$ or $N(S_{i-1})\cap S$ has to be dominated or witnessed by a vertex in $S_{i-1}\cup S_i$ by the observation above. But $T$ contains strictly more size-two components than $D$, a contradiction.
\end{claimproof}

\begin{claim}\label{AlmostAllAreDistanceThree}
Let $D$ be a minimum semitotal dominating set of $G$ with a maximum number of size-two components amongst all minimum semitotal dominating set of $G$. Then the number of vertices in $C\cap D'$ which are at distance two from another vertex in $C\cap D'$ is at most $2|A|+k(k+1)-3$.
\end{claim}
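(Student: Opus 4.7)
The plan is to argue by contradiction: assume $|S| \geq 2|A| + k(k+1) - 2$, where $S \subseteq C\cap D'$ denotes the set of vertices at distance exactly two from another vertex of $C\cap D'$. Since $C$ is independent, any two $C\cap D'$-vertices at distance two share a common neighbour in $B$. Introduce the auxiliary graph $H$ on vertex set $S$ whose edges are the pairs at distance exactly two in $G$; every vertex of $H$ has degree at least one by definition of $S$. Fix a maximum matching $M$ in $H$, let $V(M)$ be its matched vertices and $W = S\setminus V(M)$. Maximality of $M$ ensures that $W$ is an independent set in $H$, so the vertices of $W$ are pairwise at distance at least three in $G$.

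The plan is then to bound $|W|$ and $|V(M)|$ separately. Split $W = W_1\cup W_2$, where $W_1$ contains the vertices of $W$ with a clique neighbourhood and $W_2$ those with two nonadjacent neighbours. Applying Claim~\ref{MostNeighbourhoodsAreCliques} directly to $W_2$ yields $|W_2|\leq k(k+1)-1$. For $W_1$, I would use the assumption $\mathcal{R}=\varnothing$: since no vertex of $C$ is regular, any $k+1$ vertices of $W_1$ contain a pair at distance at most three in $G$. Combined with the pairwise distance-$\geq 3$ property of $W$, this means that any $k+1$ vertices of $W_1$ contain a pair at distance exactly three, from which a $P_3 + kP_2$-freeness argument (picking one $P_3$ through such a distance-three pair together with $k$ disjoint $P_2$'s obtained from further vertices of $W_1$ and their neighbours) forces $|W_1|$ to be bounded by a small constant.

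To bound $|V(M)|$, for each matched pair $(c,c')\in M$ fix a common neighbour $b_{c,c'}\in B$. Every such $b_{c,c'}$ is adjacent to $A$ by definition of $B$. The key is that if $k+1$ matched pairs are pairwise at distance $\geq 4$ in $G$ (measuring between their $(c,c',b_{c,c'})$-triples), then taking a $P_3 = c\, b_{c,c'}\, c'$ from one pair and an edge $c_i b_{c_i,c_i'}$ from each of $k$ others produces an induced $P_3+kP_2$, contradicting the $P_3+kP_2$-freeness of $G$. Consequently, pairs of $M$ cluster around $A$, and via a distribution argument (each $a\in A$ can absorb at most two pairs whose common neighbour lies in $N(a)$, using again $P_3+kP_2$-freeness and the exchange technique with $D$ that contracted edges adjacent to $a$ would create additional size-two components, contradicting the maximality of $D$), I obtain $|M|\leq |A|$ and hence $|V(M)|\leq 2|A|$.

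Adding the two bounds gives $|S|\leq 2|A|+k(k+1)-1 + (\text{bounded})$, and the tightness analysis of Claim~\ref{MostNeighbourhoodsAreCliques} — together with the fact that each tight configuration can be slightly improved by one exchange into $G[D]$ using a shared common neighbour — yields the sharper constant $-3$ instead of $-1$. The main obstacle I expect will be this last sharpening, i.e., obtaining the precise constants $2|A|$ and $k(k+1)-3$ through a careful case analysis on how the matched pairs share their common neighbours in $B$, and ensuring that each exchange used to obtain the bound still produces a minimum semitotal dominating set, so that maximality of the number of size-two components in $D$ actually yields the sought contradiction.
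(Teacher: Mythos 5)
Your decomposition into a maximum matching $M$ of the auxiliary distance-two graph plus an independent remainder $W=W_1\cup W_2$ is a reasonable start, and the bound $|W_2|\leq k(k+1)-1$ via Claim~\ref{MostNeighbourhoodsAreCliques} is sound (in fact $W_1=\varnothing$ outright: a vertex of $C\cap D'$ whose neighbourhood is a clique and which is at distance two from another vertex of $C\cap D'$ can be swapped for a common neighbour, which preserves domination and all witnesses and creates an extra size-two component, contradicting the choice of $D$). The genuine gap is the bound on $|V(M)|$. Your $P_3+kP_2$ argument only shows that at most $k$ matched pairs can be pairwise far apart; the jump from there to $|M|\leq|A|$ rests on the unproven assertion that each $a\in A$ ``absorbs at most two pairs'', and I see no reason this holds: arbitrarily many pairs $(c_i,c_i')$ may have common neighbours $b_i$ all adjacent to one and the same vertex of $A$ without creating an induced $P_3+kP_2$, and excluding such a configuration is exactly what a counting or exchange argument must establish. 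Moreover, even granting every bound you state, the pieces add to $2|A|+k(k+1)-1+|W_1|$, which overshoots the claimed $2|A|+k(k+1)-3$; the promised ``sharpening by tightness analysis'' is a hope, not an argument, and nothing in your framework produces the missing $-2$.

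The paper avoids both difficulties with a deficiency-type extremal choice instead of a matching: it picks $S\subseteq B$ maximizing $|N(S)\cap C\cap D'|-|S|$, of minimum size among the maximizers. If $|N(S)\cap C\cap D'|\geq|S|+|A|$, then replacing $N(S)\cap C\cap D'$ by $S\cup A$ yields a semitotal dominating set of size at most $|D|$ containing a $P_3$ (recall $A$ induces a $P_3+(k-1)P_2$), contradicting Theorem~\ref{thm:friendlytriple}(i); minimality of $|S|$ forces every $s\in S$ to have two neighbours in $C\cap D'$ adjacent to no other vertex of $S$, so $2|S|\leq|N(S)\cap C\cap D'|<|S|+|A|$, whence $|S|\leq|A|-1$ and $|N(S)\cap C\cap D'|\leq 2|A|-2$ --- this strict inequality is precisely where the $-2$ comes from. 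Outside $N(S)$, maximality of the objective shows that no two vertices of $C\cap D'$ share a common neighbour, so they are pairwise at distance at least three; among them, those with non-clique neighbourhoods number at most $k(k+1)-1$ by Claim~\ref{MostNeighbourhoodsAreCliques}, and those with clique neighbourhoods cannot be at distance two from any vertex of $C\cap D'$ by the swap described above. The two contributions add to exactly $2|A|+k(k+1)-3$. To salvage your route you would have to replace the per-vertex ``two pairs'' claim by a global exchange of this counting type; as written, the proof does not go through.
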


\begin{claimproof}
If every two vertices in $C\cap D'$ are at distance at least three from one another, then we are done. Thus assume that there are two vertices in $C\cap D'$ which are at distance two from one another. Let $\mathcal{S}=\arg\max_{S\subseteq B}\vert N(S)\cap C\cap D'\vert-\vert S\vert$ and let $S\in\mathcal{S}$ be a set of minimum size in $\mathcal{S}$. As there are two vertices in $C\cap D'$ which have a common neighbour, $S$ is non-empty. If $|N(S)\cap C\cap D'|\geq |A|+|S|$ then $D\setminus (N(S)\cap C\cap D')\cup S\cup A$ is a semitotal dominating set of $G$ which has cardinality at most $\vert D\vert$ and which contains a $P_3$, a contradiction to Theorem~\ref{thm:friendlytriple}(i). Hence, $\vert N(S)\cap C\cap D'\vert<\vert S\vert+ \vert A\vert$. We now claim that every vertex in $S$ is adjacent to two vertices in $C\cap D'$ which are adjacent to no other vertex in $S$. Indeed, if there exists a vertex $s\in S$ such that every one of its neighbour in $C\cap D'$ is adjacent to another vertex in $S$ then we could remove $s$ from $S$ without changing the cardinality of $\vert N(S)\cap C\cap D'\vert$, thereby contradicting the fact that $S \in \mathcal{S}$. If a vertex $s\in S$ has only one neighbour $c$ in $C\cap D'$ which is adjacent to no other vertex in $S$ then removing $s$ from $S$ would only remove $c$ from $N(S)\cap C\cap D'$, thus leaving the value of $|N(S)\cap C\cap D|-|S|$ unchanged while decreasing the cardinality of $S$, a contradiction to minimality of $\vert S\vert$. This implies that $\vert N(S)\cap C\cap D'\vert\geq 2\vert S\vert$. Combined with the inequality above, it follows that $|S|<|A|$ and $|N(S)\cap C\cap D'|\leq 2|A|-2$. Now denote by $C'=(C\cap D')\setminus N(S)$ the set of vertices in $C\cap D'$ which are not adjacent to a vertex in $S$. Note that every pair of vertices $c,c'\in C'$ does not have a common neighbour $b$ for otherwise $S'=S\cup \{b\}$ would be such that $\vert S'\vert=\vert S\vert+1$ and $\vert N(S')\cap C\cap D'\vert\geq \vert N(S)\cap C\cap D'\vert +2$ and thus $\vert N(S')\cap C\cap D'\vert-\vert S'\vert>\vert N(S)\cap C\cap D'\vert-\vert S\vert$, a contradiction to the choice of $S$. Hence, $C'$ is a set of vertices which are pairwise distance at least three and so by Claim~\ref{MostNeighbourhoodsAreCliques} it follows that at most $k(k+1)-1$ vertices in $C'$ do not have cliques as neighbourhoods. Denote $C''\subset C'$ the set of vertices whose neighbourhoods are cliques. Note that no vertex $c$ in $C'' \cap D'$ can be at distance two to any other vertex $c'$ in $C\cap D'$ for otherwise we could remove $c$ from $D'$ and replace it with a common neighbour of $c$ and $c'$ thus yielding a minimum semitotal dominating set containing strictly more size-two components than $D$, a contradiction to the choice of $D$. Thus, every vertex in $C\cap D'$ which has a common neighbour with another vertex in $C\cap D'$ must be contained in $N(S)\cap C\cap D'$ or in $C'\setminus C''$, which together have cardinality at most $2|A|+k(k+1)-3$.
\end{claimproof}

\begin{claim}\label{MostVerticesAreRegular}
There exists a minimum semitotal dominating set $D$ of $G$ such that $\vert D'\vert\leq (k+1)(\vert A\vert+2)+k(1+2(k+1))+4(\vert A\vert-1)$.
\end{claim}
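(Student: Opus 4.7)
The plan is to start from the minimum semitotal dominating set $D$ provided by Claim~\ref{MostVerticesInC}; recall that $D$ has a maximum number of size-two components among all minimum semitotal dominating sets of $G$ and satisfies $|D' \setminus C| \leq (k+1)(|A|+2)+2|A|$. It thus suffices to bound $|C \cap D'|$. Letting $X \subseteq C \cap D'$ denote the set of those vertices of $C \cap D'$ that are pairwise at distance at least three in $G$, Claim~\ref{AlmostAllAreDistanceThree} gives $|(C \cap D') \setminus X| \leq 2|A|+k(k+1)-3$, so the task reduces to bounding $|X|$. I would then partition $X = X_1 \cup X_2$ according to whether the neighbourhood of the vertex is a clique in $G$; Claim~\ref{MostNeighbourhoodsAreCliques} immediately yields $|X_2| \leq k(k+1)-1$, leaving only $|X_1|$ to handle.

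The key step is to argue that any two vertices of $X_1$ lie at distance at least four in $G$. Suppose for contradiction that $v,w \in X_1$ are at distance exactly three in $G$, along a shortest path $v\,a\,b\,w$; note that $a,b \notin D$ since $v$ and $w$ are size-one components of $G[D]$. Because $N(v)$ and $N(w)$ are cliques, we have $N[v] \subseteq N[a]$ and $N[w] \subseteq N[b]$, so $\widetilde D := (D \setminus \{v,w\}) \cup \{a,b\}$ is a dominating set of $G$ of cardinality exactly $|D|$. The vertices $a$ and $b$ witness each other, and for any $u \in D \setminus \{v,w\}$ previously witnessed by $v$ there is a common neighbour $x \in N(v)$ of $u$ and $v$; since $N(v)$ is a clique we have $ax \in E(G)$, hence $d_G(u,a) \leq 2$ and $a$ witnesses $u$ in $\widetilde D$ (symmetrically for $w$). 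Thus $\widetilde D$ is a minimum semitotal dominating set of $G$. Now either one of $a,b$ has an additional neighbour $u \in D \setminus \{v,w\}$, in which case $\{u,a,b\}$ is a friendly triple of $\widetilde D$ and Theorem~\ref{thm:friendlytriple}(i) contradicts our standing assumption that $G$ is a \no-instance; or neither does, in which case $\{a,b\}$ is a new size-two component of $\widetilde D$ while the size-one components $\{v\}$ and $\{w\}$ have vanished, contradicting the maximality of the number of size-two components of $D$.

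At this point $X_1$ is a collection of vertices of $C$ pairwise at distance at least four in $G$, each with a clique neighbourhood, so the hypothesis $\mathcal{R} = \varnothing$ immediately forces $|X_1| \leq k$ (any $k+1$ such vertices would certify that one of them is regular, a contradiction). Assembling the bounds,
\[
|D'| \;\leq\; \bigl((k+1)(|A|+2)+2|A|\bigr) + k + (k(k+1)-1) + (2|A|+k(k+1)-3),
\]
which after rearrangement equals $(k+1)(|A|+2)+k(1+2(k+1))+4(|A|-1)$, as desired. The main obstacle I anticipate is the semitotality check in the replacement argument: one has to ensure that every vertex that was previously witnessed solely by $v$ or $w$ finds a new witness in $\{a,b\}$, and it is precisely the clique assumption on $N(v)$ and $N(w)$ that makes this go through. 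The remainder of the argument is essentially bookkeeping once $|X_1| \leq k$ is established.
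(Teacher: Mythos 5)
Your proposal is correct and follows essentially the same route as the paper: take the set $D$ from Claim~\ref{MostVerticesInC}, split $C\cap D'$ via Claims~\ref{AlmostAllAreDistanceThree} and \ref{MostNeighbourhoodsAreCliques}, and use the distance-three replacement argument together with $\mathcal{R}=\varnothing$ to bound the clique-neighbourhood vertices by $k$. Your extra verification of semitotality and the friendly-triple case in the replacement step just fills in details the paper leaves implicit.
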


\begin{claimproof}
It follows from Claim~\ref{MostVerticesInC} that there exists a minimum semitotal dominating set $D$ with the maximum number of size-two components amongst all minimum semitotal dominating sets of $G$ such that $\vert D'\setminus C\vert\leq (k+1)(\vert A\vert+2)+2\vert A\vert$. Let $C_1\subset C\cap D'$ be the set of vertices in $C\cap D'$ which are at distance at least three to every other vertex in $C\cap D'$. Let $C_2\subseteq C_1$ be the set of vertices in $C_1$ whose neighbourhood is a clique. Suppose for a contradiction that there are two vertices $c,c'\in C_2$ which are at distance three. Let $b\in N(c)$ and $b'\in N(c')$ be two adjacent vertices. Then $(D\setminus\{c,c'\})\cup\{b,b'\}$ is a minimum semitotal dominating set containing strictly more size-two components than $D$, a contradiction to the choice of $D$. Thus, the vertices in $C_2$ are pairwise at distance at least four from one another and so $\vert C_2\vert\leq k$ as $\mathcal{R}=\varnothing$. It now follows from Claim~\ref{AlmostAllAreDistanceThree} that $\vert (C\cap D')\setminus C_1\vert\leq 2\vert A\vert+ k(k+1)-3$ and from Claim~\ref{MostNeighbourhoodsAreCliques} that $\vert C_1\setminus C_2\vert\leq k(k+1)-1$, which implies the claim.
\end{claimproof}

Claim~\ref{clm:Rempty} now follows from Claims~\ref{ComponentsD} and \ref{MostVerticesAreRegular}.	

\end{document}